\theoremstyle{thmstyleone}%
\theoremstyle{thmstyletwo}%
\theoremstyle{thmstylethree}%
\newcommand{\cO}{{\mathcal{O}}}
\newcommand{\Cc}{{\cal{C}}}
\newcommand{\cB}{{\mathcal{B}}}
\newcommand{\cC}{{\mathcal{C}}}
\newcommand{\cF}{{\mathcal{F}}}
\newcommand{\cS}{{\mathcal{S}}}
\newcommand{\ZZ}{{\mathbb{Z}}}
\newcommand{\RR}{{\mathbb{R}}}
\newcommand{\CC}{{\mathbb{C}}}
\newtheorem{theo} {Theorem}
\newtheorem{lemma} {Lemma}[section]
\newtheorem{prop}[lemma] {Proposition}
\newtheorem{coro}[lemma] {Corollary}
\theoremstyle{remark}
\newcommand{\ind}{\operatorname{ind}}
\newcommand{\spam}{\operatorname{span}}
\newcommand{\Ran}{\operatorname{Ran}}
\newcommand{\Ker}{\operatorname{Ker}}
\newcommand{\sign}{{\operatorname{sign}}}
\begin{document}

\title[Multiple solutions by inducing bifurcations]{Using the critical set to induce bifurcations}

\author[1]{\fnm{O.} \sur{Kaminski}}\email{otaviokaminski@gmail.com}
\equalcont{These authors contributed equally to this work.}

\author[2]{\fnm{D. S.} \sur{Monteiro}}\email{diego\_smonteiro@hotmail.com}
\equalcont{These authors contributed equally to this work.}

\author*[3]{\fnm{C.} \sur{Tomei}}\email{carlos.tomei@gmail.com}

\affil[1]{\orgdiv{Departamento de Educação}, \orgname{IBC},  \orgaddress{\street{Av.Pasteur 350} \\ \city{Rio de Janeiro}, \state{RJ}, \country{Brazil}}}

\affil[2]{\orgdiv{Colégio de Aplicação}, \orgname{UERJ},  \orgaddress{\street{R. B. Itapagipe 96}\\ \city{Rio de Janeiro} \state{RJ}, \country{Brazil}}}

\affil*[3]{\orgdiv{Departmento de Matemática}, \orgname{PUC-RIO}, \orgaddress{\street{R. Mq. S. Vicente 225} \\ \city{Rio de Janeiro}, \state{RJ}, \country{Brazil}}}

\abstract{For a function $F: X \to Y$ between real Banach spaces, we show how continuation methods to solve $F(u) = g$  may improve from basic understanding of the critical set $\cC$ of $F$. The algorithm aims at special points with a large number of preimages,  which in turn may be used as initial conditions for standard continuation methods applied to the solution of the desired equation. A geometric model based on the sets $\cC$  and $F^{-1}(F(\cC))$ substantiate our choice of curves $c \in X$ with abundant intersections with $\cC$. 
	
	We consider three classes of examples. First we handle functions  $F: \RR^2 \to \RR^2$, for which the reasoning behind the techniques is visualizable. The second set of examples, between spaces of dimension 15, is obtained by discretizing a nonlinear Sturm-Liouville problem for which special points admit a high number of solutions. Finally, we handle a semilinear elliptic operator, by computing the six solutions   of  an equation of the form $-\Delta - f(u) = g$ studied by Solimini. }

\keywords{Singularities, continuation, bifurcations, multiple solutions}

\pacs[MSC Classification]{34B15, 35J91, 35B32, 35B60, 65H20}

\maketitle

\section{Introduction}\label{introduction}
We consider the classical problem of computing solutions of $F(u)=g$, for a map $F: D \subset X \to Y$ between real Banach spaces. We present a context in which the geometry of the function $F$ can be exploited by {\it inducing bifurcations}. From knowledge of the critical set $\cC$ of  $F$, we indicate curves  $c \subset D$ with substantial intersection with $\cC$, taken as starting points of continuation algorithms.

The algorithm searches for points $g_\ast$ with a large number of preimages, i.e., right hand sides for which $F(u) = g_\ast$ has many solutions. 
Standard continuation methods may then solve $F(u) = g$ for a general  $g$ by inverting a curve in the image joining $g_\ast$ to $g$ starting at a preimage $u_\ast$ of $g_\ast$. (\cite{ALLGOWER,RHEINBOLDT}).

A standard method to obtain preimages of $g$ first extends $F$ to a function $\tilde F(u,t)$ for which a simple curve $d = (u(t),t)$ in the domain usually  satisfies $F(u(0),0) = g$.
Assuming some differentiability, one solves for additional preimages by first identifying {\it bifurcation points} $u(t_c)$, in which $D\tilde{F}(u(t_c), t_c)$ is not surjective for $t_c \in [0,1]$.
Such points yield new branches of solutions $(\tilde u(t),t)$: if they extend to $t=1$, new preimages of $g$ are obtained. Additional bifurcations may arise along such branches. Usually, the specification of the curve $d$ is very restricted and does not allow for a choice of $g = g_\ast$ as above. Continuation methods received a recent boost from ideas by Farrell, Birkisson and Funke \cite{FARRELL}, which improved  an elegant deflation strategy originally suggested by Brown and Gearhart \cite{BROWN}.  

\medskip
The main idea is simple. Clearly, not every curve $c \subset D$ intersecting abundantly the critical set $\Cc$ leads to points with many preimages. By considering very loose geometric structure, we obtain indicators leading to more appropriate curves. The algorithm requires that we may decide  if $u \in D$ is a critical point (i.e., an element of $\cC$) or if  $u \in\partial D$. Inversion near critical points relies on spectral information, in the spirit of Section 3.3 of \cite{UECKER}. 

We explore a {\it geometric model} for  functions $F: D \subset X \to Y$ between spaces of the same dimension\footnote{In infinite dimensions, Jacobians should be Fredholm operators of index zero.}. It combines standard results in analysis and topology outlined in Theorems \ref{covers} and \ref{adjacenttiles} of Section  \ref{model}.  The model fits functions satisfying a weakened form of properness, in particular a class of semilinear differential operators.

\bigskip
\noindent {\bf Geometric model:} Domain and counterdomain split in {\it tiles}, the connected components of $F^{-1} (F(\cC \cup \partial D)) \subset X$ and $F(\cC \cup \partial D) \subset Y$. The restriction of $F$ to a domain tile is a covering map onto an image tile. In particular, the number of preimages of points in an image tile is constant. Between adjacent image tiles, this number changes in a simple fashion described in Theorem \ref{adjacenttiles}.
Adjacent domain tiles separated by an arc of $\cC$ are sent to the same image tile.

\bigskip

This approach started with the study of  proper functions $F: \RR^2 \to \RR^2 $  by  Malta, Saldanha and Tomei in the late eighties (\cite{MST1}), leading to $2\times 2$, a software that computes preimages of $F$, together with other relevant geometrical objects\footnote{Available at http://www.imuff.mat.br/puc-rio/2x2, but supported  by Windows 7.}. Under generic conditions, the authors obtain a characterization of critical sets: given finite sets of curves $\{\cC_i\}$ and $\{\cS_i\}$, and finite points $\{p_{ik} \in \cC_i\}$, one can decide if there is a proper, generic function $F$ whose critical set consists of the curves $C_i$, with images $ \cS_i= F(\cC_i) $ and cusps\footnote{Informally, cusps are the second most frequent critical points, after folds \cite{Whitney}.} at the points $\{p_{ik}\}$. Given a function $F$, $2 \times 2$ obtains some critical curves $\mathcal{C}_i$, its images $\cS_i$ and their cusps $\{p_{ik}\}$: if the characterization does not hold, it provides the program with information about where to search for additional critical curves.

Two important features of the 2-dimensional context do not extend:
(a) a description of the critical set as a list of critical points,
(b) the identification of higher order singularities.
A realistic implementation for $n > 2$ led us to the current text.
Some mathematical concepts in $2 \times 2$ found application in different scenarios (for ODEs,
\cite{TOMEIEBUENO,BURGHELEA,TELES}; for PDEs, \cite{CALNETO,KAMINSKI}).

\bigskip In all examples in this text, the starting point of our arguments lie in the identification of the critical set of the underlying function.  In Sections \ref{model} and \ref{Visual} we present the geometric context  and apply  the algorithm to some visualizable examples, so that the counterparts of bifurcation diagrams can be presented concretely. 

The second class of examples, discussed in Section \ref{SturmLiouville},  arises from the discretization of a nonlinear Sturm-Liouville problem,
$$ F(u) =  - u'' + f(u) = g  , \quad u(0) = 0 = u(\pi)  .$$
For a uniform mesh with spacing $h$, the discretized equation $F_h(u) = g$ has an unexpectedly high number of solutions (\cite{TELES}), and most do not admit a continuous limit, but we consider the problem as an example of interest by itself.  The connected components $\cC_i, i=1, \ldots, n$, of the critical set of $F_h$ are graphs of functions $\gamma_i: V^\perp \to V$, where $V$ is the one dimensional space generated by the ground state of the discretization of $u \mapsto -u''$, which, as is well known, is the evaluation $u_h$ of  $u(x) = \sin x$ at points of the uniform mesh on $[0, \pi]$.  Thus, straight lines in the domain which are parallel to the vector $u_h$ contain many critical points. Parallel lines give rise to additional solutions, for geometric reasons we make clear.

Our research was  inspired by the celebrated Ambrosetti-Prodi theorem (\cite{AP,MM}). Subsequent articles (\cite{AP,BERGER,CALNETO,SMILEY,KAMINSKI}) provided  information about the geometry of semilinear elliptic operators, with  implications to the underlying numerics. In \cite{ALLGOWER2}, Allgower, Cruceanu and Tavener considered numerical solvability of semilinear elliptic equations by first obtaining good approximations of solutions from discretized versions of the problem. A filtering strategy eliminates some discrete solutions which do not yield a continuous limit, and algorithms, somehow tailored to the form of the equations, are presented and exemplified. As they remark, results on the number of solutions are abundant, but not really precise. 
 
The third context, in Section \ref{Solimini}, is a semilinear operator $F(u) = -\Delta u - f(u)$ considered by Solimini (\cite{SOLIMINI}) for which a special right hand side $g$ has six preimages. Let $\phi_0$ be the ground state of the free (Dirichlet) Laplacian. As we shall see, from the min-max characterization of eigenvalues, lines of the form $u_0 + t \phi_0$ contain abundant bifurcation points and suffice to yield all solutions.

In the Appendix we handle inversion of segments in the neighborhood of the critical set $\cC$.  Instead of using variables  related to arc length \cite{ALLGOWER},
we work with spectral variables,  in the spirit of \cite{UECKER, CALNETO} and \cite{KAMINSKI}. In particular, we avoid the difficulty of partitioning Jacobians for discretizations of infinite dimensional problems.

We used $2 \times 2$ and MATLAB for graphs and numerical routines.

\bigskip

\noindent{\bf Acknowledgments}
We thank Nicolau Saldanha and José Cal Neto for extended conversations. Tomei gratefully acknowledges grants from FAPERJ (E-26/202.908/2018, E-26/200.980/2022) and CNPq (306309/2016-5, 304742/2021-0). Kaminski and Monteiro received graduate grants from CNPq and CAPES/FAPERJ respectively.

\section{Basic geometry} \label{model}

For real Banach spaces $X, Y$ and an open set $U \subset X$, we consider a  function $F: U \to Y$ with continuous derivative, which we usually restrict to closed domains $D \subset U$ with smooth boundary $\partial D$. Recall that the {\it critical set} $\cC$ of $F: U \to Y$ is
\[ \cC \ = \ \{ u \in U \ | \ F \ \hbox{is not a local homeomorphism from} \ u \ \hbox{to} \ F(u)\} \ .\]
Regular points are points of $U$ which are not critical. Since $F$ is of class $C^1$, if the Jacobian $DF(u): X \to Y, u \in U$, is an isomorphism, the inverse function theorem implies that $u$ is a regular point and $F$ is a local diffeomorphism of class $C^1$ at $u$. As $D \subset U$, any point in $D$ (in particular in $\partial D$) can be critical or not.

As in \cite{MST1} (and in \cite{DUCZMAL} for domains with boundary), define the {\it flower} $\mathcal{F} = F^{-1}(F(\Cc \cup \partial D))$,  a convenient description of the geometry of  $F$. As an example, Berger and Podolak \cite{BERGER} proved that, under the hypotheses of the Ambrosetti-Prodi theorem (Theorem \ref{APT}), the flower of  $F$ is essentially the simplest:  $\mathcal{F} = \cC$ and  is diffeomorphic to a topological hyperplane. 

In general, domain $D$ and codomain $Y$ split into {\it tiles}, the connected components of $D \setminus \mathcal{F}$ and $Y \setminus F(\cC \cup \partial D)$ respectively.  We assign a common label $A_i$ to a point and its image $A = F(A_i)$. 


As a first example of the geometric model in the Introduction, consider 
$$ F: U = D= X = \RR^2 \to Y =  \RR^2 \ , \quad (x,y) \mapsto  (x^2 - y^2 +x, 2 xy - y)\ . $$
\begin{figure} [ht] 
	\begin{centering}
		\includegraphics[scale=0.33]{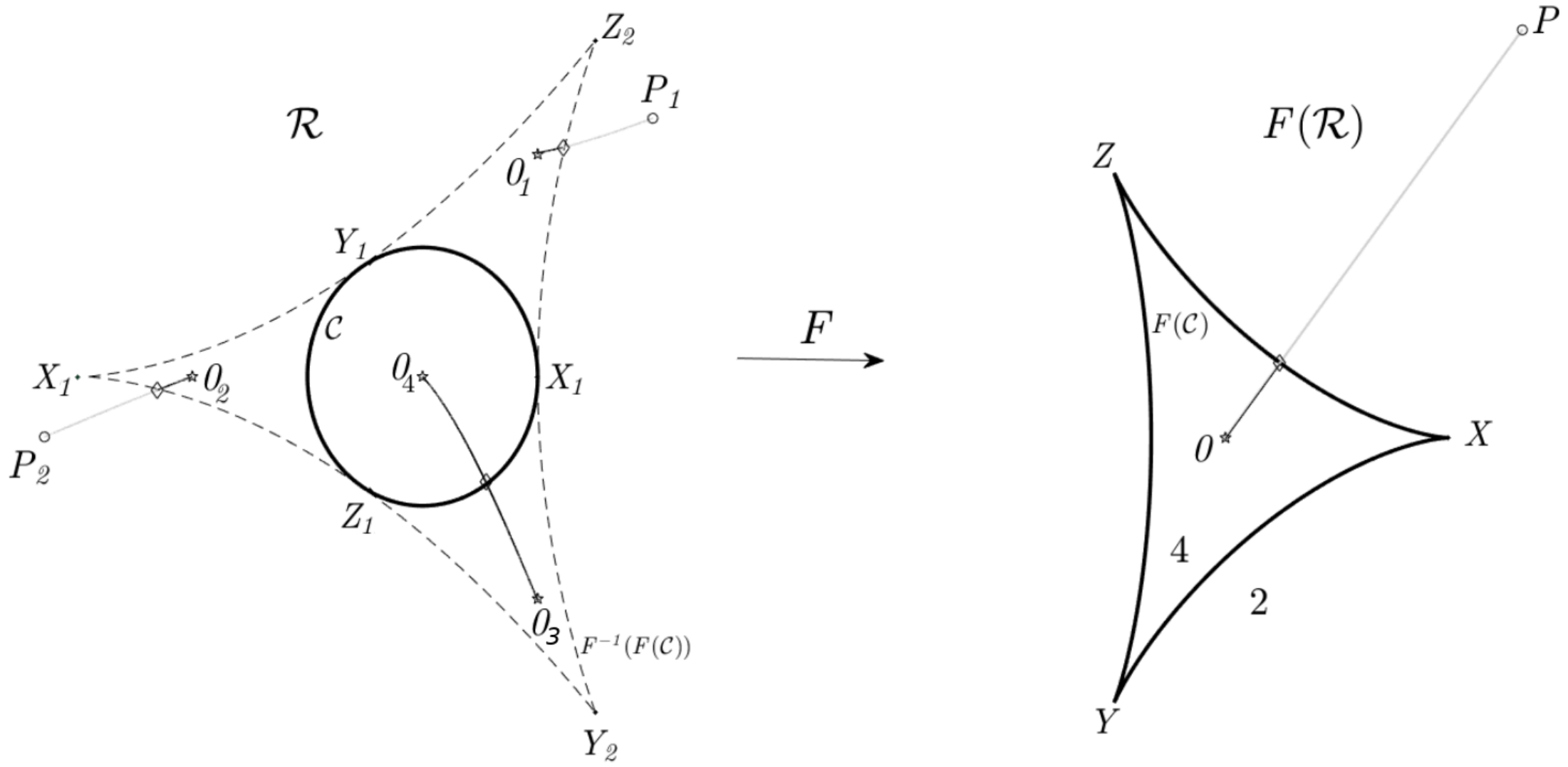}
		\caption{Five tiles in the domain, two in the codomain. }
		\label{fig:z2zbarra}
	\end{centering}
\end{figure}
Its domain\footnote{These pictures were obtained with $2\times 2$; horizontal and vertical scales are different.},
on the left of Figure \ref{fig:z2zbarra}, contains the critical set $\Cc$, a circle, and the flower $\mathcal{F}$, consisting of three curvilinear triangles\footnote{By a triangle, we mean the region bounded by three arcs.} with vertices $X_i, Y_i$ and $Z_i, i=1,2$. There are five tiles in the domain, two in the codomain. The map $F$ is a homeomorphism from each bounded tile  to the bounded tile $X_1Y_1Z_1$ in the image. Points in the image tile $XYZ$ have four preimages (in particular, $F(0) = 0$ has four preimages, indicated by $0_1, \ldots, 0_4$). The unbounded tile $\mathcal{R}$ in the domain is taken to  the unbounded tile $F(\mathcal{R})$, but the map is not bijective: each point in  $F(\mathcal{R})$ has {\it two} preimages, both in $\mathcal{R}$. More geometrically, all restrictions of $F$ to tiles are covering maps (\cite{HATCHER}), and thus must be diffeomorphisms when their image is simply connected. In Figure \ref{fig:z2zbarra}, the numbers on the tiles of the image are the (constant) number of preimages of points in each tile.

\subsection{Local theory} \label{basicfolds}

We consider the behavior of $F$ at points in $\cC$ and $\partial D$.

In Figure \ref{fig:z2zbarra}, points in different tiles in the image (necessarily adjacent, in this case, i.e., sharing an arc of images of critical points) have their number of preimages differing by two.
Inversion of the segment $[(0,0) , P]$ by continuation gives rise to two subsegments, whose interiors have two and four preimages. Starting from $P$ with initial condition $P_1$ or $P_2$, inversion carries through to $0$ without difficulties, giving rise to roots $0_1$ and $0_2$. However, when inverting from $0$ with initial conditions $0_3$ and $0_4$, continuation is interrupted. This is the expected behavior of inversion by continuation at a fold, as we now outline.

\begin{figure} [ht] 
	\begin{centering}
		\includegraphics[scale=0.3]{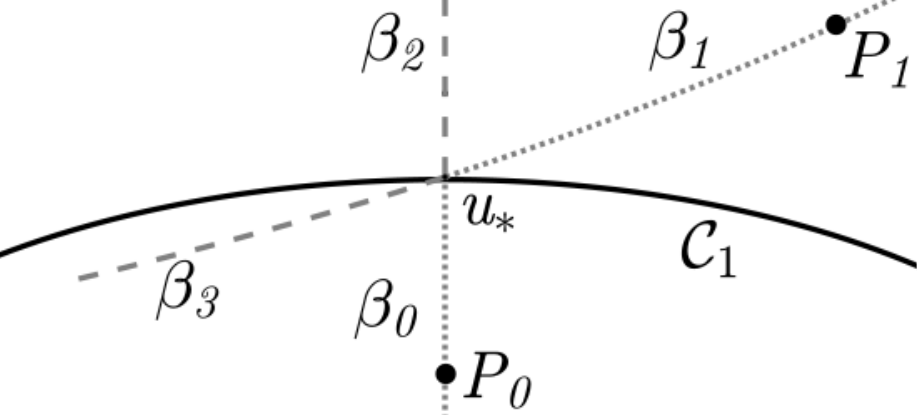} 
		\caption{Near a fold $u_\ast$.}
		\label{fig:extensao2}
	\end{centering}
\end{figure}

A critical point $x \in U$ is a {\it fold} of $F$ if and only if there are local changes of variables centered at $x$ and $F(x)$ for which $F$ becomes
\[ (t, z) \in \RR \times Z \mapsto (t^2, z) \in \RR \times Z\]
for some real Banach space $Z$.  On Figure \ref{fig:extensao2}, point $u_\ast$ is a fold, and the vertical line through it splits in two segments, $\beta_0$ and $\beta_2$. The inverse of $F(\beta_0)$ yields again $\beta_0$ and another arc $\beta_1$, also shown in Figure \ref{fig:extensao2}. In a similar fashion, the inverse of $F(\beta_2)$ contains $\beta_2$ and another arc $\beta_3$. Notice the similarity with a bifurcation diagram: two new branches $\beta_1$ and $\beta_3$ emanate from $r$.

\medskip
Folds are identified as follows. Let $X$ and $Y$ be real Banach spaces admitting a bounded inclusion $X \hookrightarrow Y$. Denote by $\cB(X,Y)$  the set of bounded operators between $X$ and $Y$, endowed with the usual operator norm. 

\medskip

\begin{theo} \label{trespassing}
	
	\noindent Suppose that the function $F: U \to Y$ is of class $C^3$. For a fixed $x \in U$, let the Jacobian $DF(x): X \to Y$ be  a Fredholm operator of index zero for which 0 is an eigenvalue. Let $\Ker DF(x)$ be spanned by a vector $k \in X$ such that $k \notin \Ran DF(x)$. Then the following facts hold.
	
	\noindent (1) For some open ball $B \subset  \cB(X,Y)$ centered in $DF(x)$, operators $T \in B$ are also Fredholm of index zero and $\dim \Ker T \le 1$. 
	
	\noindent (2) There is a  map $\lambda_s: T \in B \to \RR$ taking $T$ to its eigenvalue of smallest module, which is necessarily a real eigenvalue. The map is real analytic. For a suitable normalization, the  corresponding eigenvector map $T \mapsto \phi_s(T) \in X$ is  real analytic. 
	
	\noindent (3) For a small open ball $B_x \subset X$ centered in $x$, there are $C^3$ maps
	\[ \tilde x \in B_x \mapsto \lambda_s( DF(\tilde x)) \in \RR \ , \quad x \mapsto \phi_s( DF(\tilde x)) \in X\ . \]
	
	If additionally  
	$D \lambda_s (x) . \phi(x) \ \ne \ 0 $,  $x$ is a fold. 
	
\end{theo}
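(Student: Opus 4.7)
The plan is to dispatch parts (1)--(3) via standard Fredholm perturbation theory and to concentrate the genuinely geometric content in part (4). For (1), I would invoke two classical facts: that the set of index-zero Fredholm operators is open in $\cB(X,Y)$, and that kernel dimension is upper semicontinuous on it. Since $\dim\Ker DF(x) = 1$, these give a ball $B$ around $DF(x)$ in which every $T$ is Fredholm of index zero with $\dim\Ker T \le 1$.

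For (2), I would exploit the hypothesis $k\notin\Ran DF(x)$, which gives the splitting $Y = \Ran DF(x)\oplus\langle k\rangle$ and makes $0$ an algebraically simple eigenvalue of $DF(x)$ (ascent one). I would isolate $0$ from the rest of the spectrum by a small circle $\Gamma\subset\CC$ and use the Riesz projection $P(T) = \frac{1}{2\pi i}\oint_\Gamma(zI - T)^{-1}\,dz$, which depends real-analytically on $T$ in a (possibly shrunken) $B$ and has rank one there. Setting $\lambda_s(T) := \tr(T\,P(T))$ yields the real-analytic eigenvalue map, which is real-valued since the only spectral point inside $\Gamma$ is real; fixing a vector $v\in X$ and a functional $\ell\in X^*$ with $\ell(k) = 1$, the prescription $\phi_s(T) := P(T)v/\ell(P(T)v)$ provides an analytic normalized eigenvector. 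Part (3) then follows by composing these analytic maps with the $C^2$ map $\tilde x\mapsto DF(\tilde x)$.

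For the fold in (4), I would perform a Lyapunov--Schmidt reduction. Split $X = \langle k\rangle \oplus W$ using the kernel of $P(DF(x))$, and split $Y = \Ran DF(x)\oplus\langle k\rangle$; let $\psi^* \in Y^*$ vanish on $\Ran DF(x)$ with $\psi^*(k) = 1$. Writing $u = x + tk + w$ and solving the $\Ran DF(x)$-component of $F(u) - F(x) = 0$ for $w = w^*(t)$ near $(0,0)$, which is possible because $DF(x)|_W$ is an isomorphism onto $\Ran DF(x)$, I obtain a scalar reduced equation $g(t) := \psi^*(F(x + tk + w^*(t)) - F(x))$ with $g(0) = g'(0) = 0$, the latter using $DF(x)k = 0$ together with $(w^*)'(0) = 0$.

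The decisive step is to identify $g''(0)$. Since $\psi^*$ annihilates $\Ran DF(x)$, a short Taylor calculation gives $g''(0) = \psi^*(D^2 F(x)(k,k))$. I would then differentiate the eigenvalue relation $DF(\tilde x)\phi_s(DF(\tilde x)) = \lambda_s(DF(\tilde x))\,\phi_s(DF(\tilde x))$ at $\tilde x = x$ in the direction $k$, use $\lambda_s(DF(x)) = 0$ and $\phi_s(DF(x))$ proportional to $k$, and apply $\psi^*$ to obtain $\psi^*(D^2 F(x)(k,k)) = D\lambda_s(x)(k)\cdot\psi^*(k)$. Since $\psi^*(k) = 1$, the hypothesis $D\lambda_s(x)\cdot\phi(x)\ne 0$ translates exactly to $g''(0)\ne 0$. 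The one-variable Morse lemma then produces a local diffeomorphism $s\mapsto t(s)$ with $g(t(s)) = \pm s^2$; combining this with the change $z := w - w^*(t)$ in the domain and the splitting of $Y$ in the codomain yields the fold normal form $(s,z)\mapsto(s^2,z)$. The principal obstacle is the identification $g''(0) = D\lambda_s(x)(k)\cdot\psi^*(k)$ via the perturbed eigenvalue equation; everything else is routine once the analytic framework of (1)--(3) is in place.
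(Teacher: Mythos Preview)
Your treatment of (1)--(3) is correct and more detailed than the paper's own proof, which merely cites Proposition~16 of \cite{CaTZ} (invoking the implicit function theorem rather than Riesz projections) for (1)--(2) and then says (3) follows. Both routes to (2) are standard; yours requires that $0$ be isolated in $\sigma(DF(x))$, which is implicit in the theorem's statement anyway.

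For (4) the paper again only cites Proposition~2.1 of \cite{MST3}. Your key identification $g''(0) = \psi^*(D^2F(x)(k,k)) = D\lambda_s(x)\cdot k$ is correct and is precisely the content of that characterization: a point with one-dimensional kernel and cokernel is a fold exactly when $D^2F(x)(k,k)\notin\Ran DF(x)$, equivalently when $k$ is transversal to $\cC=\{\lambda_s=0\}$.

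There is, however, a gap in your final step. The one-variable Morse lemma applied to $t\mapsto g(t)$ does not produce the fold normal form, because after your substitution the full map still depends nontrivially on $z$ in \emph{both} components. The change $z:=w-w^*(t)$ does not straighten the $\Ran DF(x)$-component: $(I-Q)\bigl(F(x+tk+w^*(t)+z)-F(x)\bigr)$ equals $DF(x)z$ plus higher-order terms, not $z$ itself; and the scalar component equals $\pm s^2$ only along $z=0$. The correct procedure is first to take $\zeta:=(I-Q)\bigl(F(x+tk+w)-F(x)\bigr)$ as a new domain variable (a local diffeomorphism in $w$ by the inverse function theorem), obtaining a map $(t,\zeta)\mapsto(\tilde g(t,\zeta),\zeta)$; then to apply the \emph{parametrized} Morse lemma to $\tilde g$ in $t$ with parameter $\zeta$, giving $\tilde g(t(s,\zeta),\zeta)=\pm s^2+h(\zeta)$; and finally to absorb $h(\zeta)$ by a codomain shift. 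Alternatively, once you have established $\psi^*(D^2F(x)(k,k))\ne0$, you may simply invoke the standard fold criterion, as the paper does via \cite{MST3}, and skip the explicit normal-form construction.
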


\medskip
In particular, the critical set $\cC$ of $F$ is a submanifold of $X$ of codimension one near $x$. Jacobians are not required to be self-adjoint operators. Characterizations of folds  in the infinite dimensional context may be found in \cite{BCT1, BCT2, D, BD, MST3}.

\begin{proof}  We barely sketch a lengthy argument. The implicit function theorem yield items (1) and (2), as described in Proposition 16 of \cite{CaTZ}. Item (3) then follows. Proposition 2.1 of \cite{MST3} implies item (4).	\end{proof}

\bigskip

We now consider points at the boundary $\partial D$.
As an example, restrict $F$ above to a disk $U$ with boundary $\partial D$ as in Figure \ref{fig:fronteira}. The flower now  includes $\partial D$ and an additional dotted curve $F^{-1} (F(\partial D))$, containing the preimages $A_i, B_i, C_i, i=1,2$ of points $A, B$ and $C$. Each one of the tiles I, II and III has a single preimage  inside $U$, the other being outside. Trespassing an arc of images of points of $\partial D$ only changes the number of preimages by one.

\begin{figure} [ht] 
	\begin{centering}
		\includegraphics[scale=0.35]{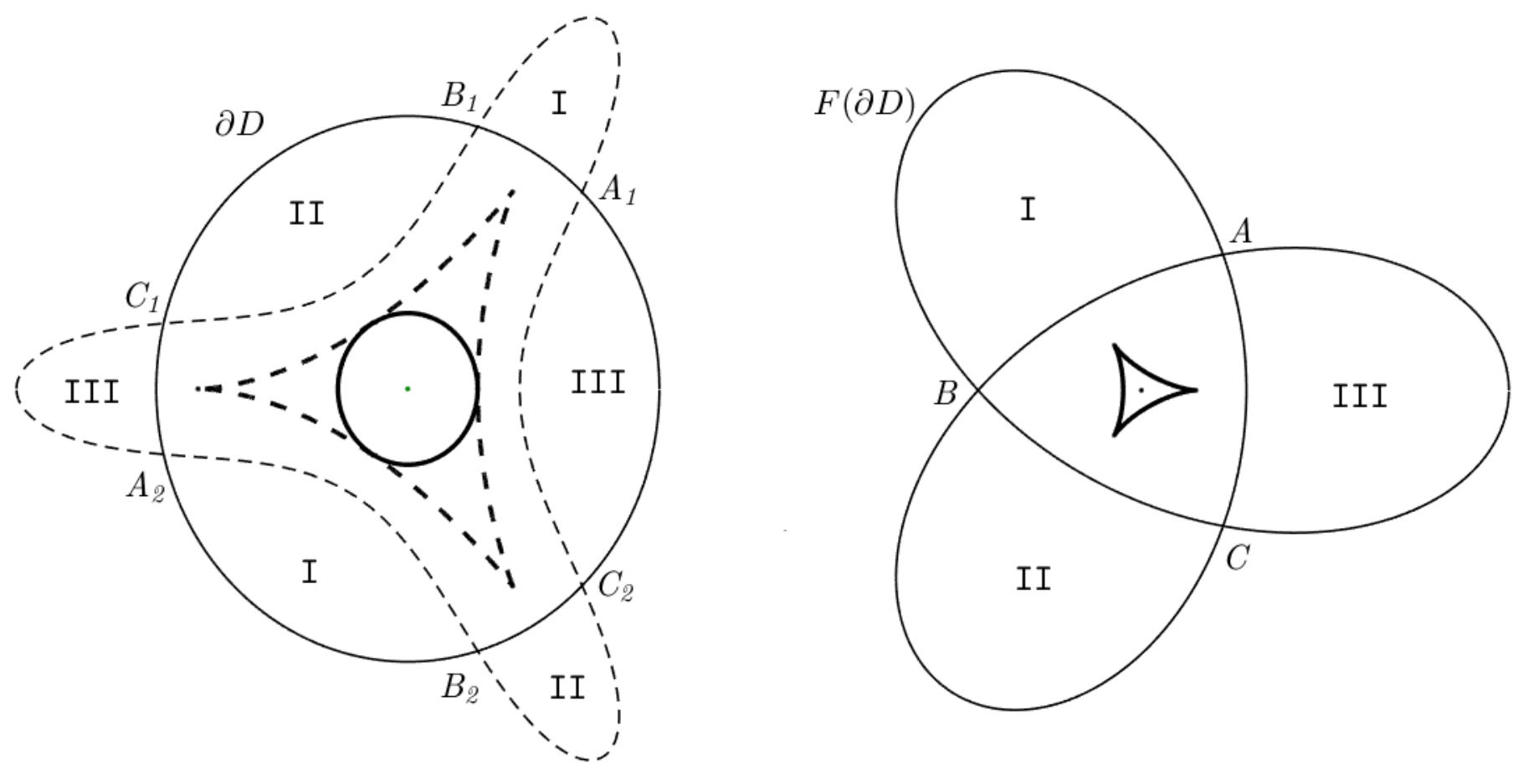}
		\caption{Behavior at the boundary. }
		\label{fig:fronteira}
	\end{centering}
\end{figure}

\subsection{Restricting functions to tiles, b-proper functions  } \label{primeiroexemplo}

A function $F: D \to Y$ is {\it proper} if the inverse of a compact set of $Y$ is a compact set of $D$. It is {\it proper on bounded sets} (or equivalently, b-proper) if its restriction $F_B: B \subset D \to Y$ to bounded, closed sets $B$ is proper.
As is well known, a continuous function $F: \RR^n \to \RR^n$ is proper if and only if $ \| F(x) \| = \infty$ as $\|x\| \to \infty$.  Proper functions are b-proper, but the second definition incorporates  a class of elliptic semilinear operators.

\medskip

\begin{prop} \label{gerais} Let $Y$ be a real Banach space and $G: Y \to Y$, $G(u) = u + \Phi(u)$, be continuous. Suppose that, for any closed ball $B \subset Y$, $\overline{\Phi(B)}$ is a compact set. Then $G$ is b-proper. Moreover,
	$G$ is proper if and only if $\| G(x) \| \to \infty$ as $\|x\| \to \infty$.
\end{prop}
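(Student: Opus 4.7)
My plan is to dispatch the two assertions separately, both times leveraging the decomposition $G = I + \Phi$. For b-properness, I fix a closed bounded set $B \subset Y$ and a compact $K \subset Y$, and consider an arbitrary sequence $(u_n) \subset B \cap G^{-1}(K)$. Since $K$ is compact, I can pass to a subsequence with $G(u_n) \to y \in K$. Since $\overline{\Phi(B)}$ is compact by hypothesis and $(u_n) \subset B$, a further subsequence gives $\Phi(u_n) \to z$. The identity $u_n = G(u_n) - \Phi(u_n)$ then forces $u_n \to y - z$, and this limit lies in $B$ (closed) and in $G^{-1}(K)$ (by continuity of $G$). Thus $B \cap G^{-1}(K)$ is sequentially compact, and b-properness follows.

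For the characterization of full properness, the direction $(\Leftarrow)$ drops out of b-properness together with the coercivity hypothesis: any compact $K \subset Y$ lies in some closed ball $\overline{B(0,M)}$, and the assumption $\|G(x)\| \to \infty$ as $\|x\| \to \infty$ provides $R > 0$ with $\|x\| > R$ implying $\|G(x)\| > M$; consequently $G^{-1}(K) \subset \overline{B(0,R)}$, and the b-property applied to $\overline{B(0,R)}$ makes $G^{-1}(K)$ compact.

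The converse $(\Rightarrow)$ is the delicate step, which I would tackle by contrapositive. Assume there is a sequence $(x_n)$ with $\|x_n\| \to \infty$ and $\|G(x_n)\| \le M$; the goal is to produce a compact $K \subset Y$ whose preimage is not compact. The main obstacle is that a bounded sequence $y_n := G(x_n)$ in an arbitrary infinite-dimensional Banach space need not have a norm-convergent subsequence, so one cannot simply set $K = \overline{\{y_n\}}$. The plan is first to pass to a subsequence eliminating the trivial case: if infinitely many $x_n$ shared a common image $y^\ast$, they would populate the compact fibre $G^{-1}(\{y^\ast\})$ yet be unbounded, an immediate contradiction. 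Having reduced to the case of pairwise distinct $y_n$, I would exploit both the closedness of proper continuous maps into Hausdorff spaces and the compactness hypothesis on $\Phi$, through the identity $\Phi(x_n) = y_n - x_n$, to extract a convergent subsequence $y_{n_k} \to y^\ast$ and form the compact set $K = \{y_{n_k}\}_k \cup \{y^\ast\}$. Properness would then force $G^{-1}(K)$ to be compact, yet it would contain the unbounded sequence $\{x_{n_k}\}$, yielding the desired contradiction.
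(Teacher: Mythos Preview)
Your argument for b-properness is correct and coincides with the paper's (the paper proves only this part and leaves the ``moreover'' to the reader). Your argument for $(\Leftarrow)$ is also correct.

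The gap lies in $(\Rightarrow)$. You correctly identify the obstacle: from $\|x_n\|\to\infty$ with $y_n=G(x_n)$ bounded, one must produce a compact $K\subset Y$ with $G^{-1}(K)$ non-compact, and in infinite dimensions the bounded set $\{y_n\}$ need not be relatively compact. Your plan is then to extract a convergent subsequence $y_{n_k}\to y^\ast$ by combining the closedness of proper maps with the compactness hypothesis on $\Phi$, via the identity $\Phi(x_n)=y_n-x_n$. But these ingredients do not produce such a subsequence. Closedness gives only that $\{y_n\}$ is closed in $Y$, which is far from relative compactness. And the hypothesis on $\Phi$ concerns images of \emph{bounded} balls, whereas the sequence $(x_n)$ is unbounded; indeed, the very identity you invoke yields $\|\Phi(x_n)\|\ge\|x_n\|-M\to\infty$, so the values $\Phi(x_n)$ eventually lie outside every ball on which compactness of $\Phi$ is assumed. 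Nothing you have listed forces $(y_n)$ to accumulate anywhere.

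This direction is genuinely subtle: for general continuous maps between infinite-dimensional Banach spaces, properness does not imply $\|G(x)\|\to\infty$ as $\|x\|\to\infty$, so a correct argument must exploit the compact-perturbation form $G=I+\Phi$ in a more substantial way than your sketch indicates.
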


\begin{proof} We prove that $G$ is b-proper, the remaining argument is left to the reader. For a sequence $y_n \in K$ with $y_n\to y_\infty \in K$, and $u_n \in B \subset Y$ such that $G(u_n) = u_n + \Phi(u_n) = y_n$. For an appropriate subsequence, $\Phi(u_{n_m}) \to w \in Y$ and then $u_{n_m}= - \Phi(u_{n_m}) + y_{n_m}\to -w + z_\infty \in Y$ and hence $B \cap G^{-1}(K)$ is compact.
\end{proof}

\begin{coro} \label{elipticos} Let $\Omega \subset \RR^n$ be a bounded set with smooth boundary. For a smooth $f: \RR\to \RR$, set $F:X = C^{2,\alpha}_D(\Omega) \to Y = C^{0,\alpha}(\Omega)$ given by $F(u) = - \Delta u + f(u)$. Then $F$ is b-proper  and
	$F$ is proper if and only if $\| F(x) \| = \infty$ as $\|x\| \to \infty$.
\end{coro}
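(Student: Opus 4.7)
The plan is to reduce the statement to Proposition \ref{gerais} by precomposing with the inverse of the Dirichlet Laplacian. Let $L: X \to Y$ be given by $L u = -\Delta u$ with zero Dirichlet data. By Schauder theory on domains with smooth boundary, $L$ is a bounded linear isomorphism from $X$ onto $Y$. Setting $\tilde F = L^{-1} \circ F : X \to X$, we have
\[
\tilde F(u) \e u + \Phi(u), \qquad \Phi(u) \e L^{-1}\bigl(f(u)\bigr).
\]
Since $L^{-1}$ is a homeomorphism, $F$ is proper (resp.\ b-proper) if and only if $\tilde F$ is, and $\|F(u)\|_Y \to \infty$ if and only if $\|\tilde F(u)\|_X \to \infty$. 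Thus it suffices to verify the hypothesis of Proposition \ref{gerais} for $\tilde F$, namely that $\overline{\Phi(B)}$ is compact in $X$ for every closed bounded $B \subset X$.

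The core of the argument is an elliptic-regularity-plus-compact-embedding estimate. Given $B \subset X = C^{2,\alpha}_D(\Omega)$ bounded, the Nemytskii composition $u \mapsto f\circ u$ sends $B$ into a bounded subset of $C^{2,\alpha}(\overline{\Omega})$, using that $f$ is smooth so that chain-rule expansions for $D^2(f\circ u)$ involve only polynomial combinations of $f^{(k)}(u)$ and derivatives of $u$ up to order two, each controlled in $C^{0,\alpha}$ by $\|u\|_X$. Schauder bootstrapping then gives $\Phi(u) = L^{-1}(f(u)) \in C^{4,\alpha}(\overline{\Omega}) \cap X$, with
\[
\|\Phi(u)\|_{C^{4,\alpha}} \ \le \ C\bigl(\|f(u)\|_{C^{2,\alpha}}\bigr),
\]
uniformly for $u \in B$. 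Since the inclusion $C^{4,\alpha}(\overline{\Omega}) \hookrightarrow C^{2,\alpha}(\overline{\Omega})$ is compact by Arzelà--Ascoli, $\Phi(B)$ is relatively compact in $X$, as required.

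Applying Proposition \ref{gerais} to $\tilde F$ (with the ambient space in its statement being $X$ rather than $Y$) yields b-properness of $\tilde F$, hence of $F$; the same proposition gives the equivalence between properness and norm coercivity, which transfers back to $F$ via the isomorphism $L$.

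The main technical obstacle is the Nemytskii step: ensuring that $u\mapsto f\circ u$ preserves bounded sets at the level of $C^{2,\alpha}$ rather than merely $C^{0,\alpha}$. This is what forces the bootstrap up to $C^{4,\alpha}$ in order to recover compactness back in $X$; one must keep careful track of Hölder seminorms of composite derivatives, but this is a standard computation since $f$ is $C^\infty$ and all quantities involved are scalar.
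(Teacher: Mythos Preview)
Your argument is correct and follows the same blueprint as the paper --- reduce to Proposition~\ref{gerais} by composing with the inverse Dirichlet Laplacian --- but you compose on the opposite side. The paper sets $G = F\circ(-\Delta)^{-1}: Y\to Y$, so that $G(v)=v+\Phi(v)$ with $\Phi(v)=f((-\Delta)^{-1}v)$; then compactness of $\overline{\Phi(B)}$ in $Y=C^{0,\alpha}$ follows immediately from the compact inclusion $X\hookrightarrow Y$ together with continuity of the Nemytskii map on $C^{0,\alpha}$ (the cited result of Chiappinelli--Nugari). Your choice $\tilde F = (-\Delta)^{-1}\circ F: X\to X$ forces you to recover compactness in the stronger norm $C^{2,\alpha}$, which is why you need the extra Schauder bootstrap to $C^{4,\alpha}$ and the verification that $u\mapsto f\circ u$ preserves bounded sets at the $C^{2,\alpha}$ level. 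Both routes are valid; the paper's is a bit more economical, while yours has the mild advantage of keeping the domain space fixed throughout.
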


\medskip
Here, $C^{2,\alpha}_D(\Omega) $ is the  H\"older space of functions equal to zero on $\partial \Omega$, $\alpha \in (0,1)$.



\begin{proof} Set $ G: Y \to Y, \ G(v) = v + \Phi(v)$, where $\Phi(v) = f(\Delta^{-1} v)$. Recall that $-\Delta: X \to Y$ is an isomorphism and the inclusion $X \hookrightarrow Y $ is compact. The nonlinear map $\Phi$  satisfies the hypotheses of the previous proposition (\cite{CHIAPPINELLI}).
\end{proof}

The geometric model of the Introduction holds for b-proper functions: here is the first step.

\begin{theo} \label{covers} For a bounded, open set $U \subset X$, consider a b-proper function $F: U \to Y$. Then the restriction of $F$ to a bounded tile $T_D \subset U \setminus \cF$ in the domain is a covering map  $F_D: T_D \to T_C = F(T_D)$  of finite degree, where $T_C$ is a tile in the image. Said differently, $F_D$  is a surjective local diffeomorphism and all points in $T_C$ have the same (finite) number of preimages.
\end{theo}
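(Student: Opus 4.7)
The plan is to verify the three ingredients of a finite-degree covering map onto an image tile: local diffeomorphism structure on $T_D$, matching of $T_C = F(T_D)$ with a full image tile, and a uniform local product structure for each fiber. The consistent technical tool throughout will be b-properness, used to prevent sequences of preimages from escaping to $\cF$ or to $\partial U$.

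First, since $T_D \subset U \setminus \cF$ and $\cC \subset \cF$, no point of $T_D$ is critical, so by the remark following the definition of $\cC$ in Section \ref{model}, $F$ is a local $C^1$ diffeomorphism at each $u \in T_D$. In particular $F|_{T_D}$ is an open map, so $T_C$ is open and, as the continuous image of the connected set $T_D$, connected. Moreover $T_C$ sits inside a unique image tile $T_C'$, since any $y = F(u)$ with $u \in T_D$ and $y \in F(\cC \cup \partial D)$ would put $u$ in $\cF$, contradicting $u \in T_D$. To upgrade the inclusion $T_C \subset T_C'$ to equality I would show $T_C$ is closed in $T_C'$: given $y_n \in T_C$ with $y_n \to y \in T_C'$ and preimages $u_n \in T_D$, the compact set $K := \{y\} \cup \{y_n\}$ lies in $Y$ while $\{u_n\} \subset \overline{T_D}$ is closed and bounded, so b-properness makes $F^{-1}(K) \cap \overline{T_D}$ compact. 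Pass to a subsequence $u_{n_k} \to u_\infty \in \overline{T_D}$ with $F(u_\infty)=y$; because $y \notin F(\cC \cup \partial D)$, necessarily $u_\infty \notin \cF$, hence $u_\infty \in T_D$ and $y \in T_C$. Combined with openness of $T_C$ and connectedness of $T_C'$, this gives $T_C = T_C'$.

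For the covering property, fix $y \in T_C$. The same b-properness argument applied to the constant sequence $y_n \equiv y$ shows $F^{-1}(y) \cap T_D$ is compact; its points are isolated regular preimages, so the set is finite, say $\{u_1,\ldots,u_N\}$. Pick pairwise disjoint neighborhoods $W_i \subset T_D$ on which $F$ restricts to a diffeomorphism onto an open image, and let $V$ be a connected open neighborhood of $y$ contained in $\bigcap_i F(W_i)$. The step I expect to be the main obstacle is the claim
\[
F^{-1}(V) \cap T_D \;=\; \bigsqcup_{i=1}^{N} (F|_{W_i})^{-1}(V)
\]
for $V$ taken small enough, since this cannot be deduced from purely local information at the $u_i$. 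If it failed, one could find $V_n$ shrinking to $\{y\}$ and $v_n \in T_D \setminus \bigcup_i W_i$ with $F(v_n) \in V_n$; extracting via b-properness a limit $v_\infty \in \overline{T_D}$ and arguing exactly as before that $v_\infty \in T_D$ with $F(v_\infty)=y$ would force $v_\infty = u_i$ for some $i$, contradicting $v_n \notin W_i$ for large $n$. Once the local product structure is in place, finiteness of the degree follows from finiteness of one fiber, and connectedness of $T_C$ promotes the locally constant fiber cardinality to a global constant, completing the proof.
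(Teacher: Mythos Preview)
Your proposal is correct and follows essentially the same route as the paper: an open/closed argument in the image tile using b-properness for surjectivity, compactness plus isolation of regular preimages for finiteness of fibers, and the same contradiction via an escaping sequence of extra preimages for the local product structure. The only cosmetic difference is that you explicitly name the ambient image tile $T_C'$ before proving $T_C = T_C'$, whereas the paper conflates the two from the outset.
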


\medskip
Up to sign, the degree of $F_D$ is the number of the preimages under $F_D$ of any point in $T_C$. In particular, one may consider degree theory on restrictions of $F$ to tiles. In \cite{MST1} some relationships between the number of cusps and the degree of restrictions of $F$ to tiles are used in the search of critical curves.

Theorem \ref{covers} is a standard argument in the theory of covering spaces \cite{HATCHER}. For a proper function $F: X \to Y$, it holds also for unbounded tiles $T_D$ in the domain.

\begin{proof}
	
	Since $T_C$ is an open connected set, surjectivity follows once we prove that $F(T_D) \subset T_C$ is open and closed. Openness is clear from the inverse function theorem, as every point of $T_D$ is regular. We now show that $F(T_D)$ is closed. Take a convergent sequence $y_n = F(x_n) \in T_C$, $y_n \to y_\infty \in T_C$. As $F$ is a proper bounded function, there is a subsequence $x_{n_m} \in \overline{T_D}$ such that $x_{n_m} \to x_\infty \in \overline{T_D}$ and $F(x_\infty)=  y_\infty$. As $y_\infty \in T_C$, it is not in $F(\cC \cup \partial D$), and thus  $x_\infty \in T_D$.
	
	Consider the restriction $F_D: T_D \to T_C = F(T_D)$: we  show that $y \in T_C$ has finitely many preimages. By properness of $F_D$, $F^{-1}(y)$ is a compact set of $T_D$: if $F^{-1}(y)$ is infinite, it has an accumulation point $x_\ast$ for which $F(x_\ast)=y$. Also, $x_\ast \notin \mathcal{C}$, as $y \in T_C$. But at regular points, $F$ is a local homeomorphism: there are no convergent sequences to $x_\ast$ in $F^{-1}(y)$. 
	
	Since $F_D$ is a surjective local homeomorphism and each point has finitely many preimages,  it is a covering map: as $T_D$ is connected, the fact that all points in $T_C$ have the same number of preimages follows. We give details for the reader's convenience. By connectivity, it suffices to show that points sufficiently close to $y \in T_C$ have the same number of preimages.
	Suppose then $F^{-1}(y) = \{x_1, \ldots, x_k\} \subset T_D$, a collection of regular points: there must be sufficiently small, non- intersecting neighborhoods $V_{x_i}, i=1, \ldots,k$ of the points $x_i$ and $V_y \subset T$, such that the restrictions $F:V_{x_i} \to V_y$ are homeomorphisms. Thus, points in $V_y$ have at least $k$ preimages. Assume by contradiction  a sequence $y_n \to y, y_n \in V_y$, such that each  $y_n$ admits  an additional preimage $x_n^\ast$, necessarily outside of $\cup_i V_{x_i}$. By properness, for a convergent subsequence $x_{m_n}^\ast \to x_\infty^\ast$, $F(x_\infty^\ast)= y$, and $x_\infty^\ast \ne x_i, i=1,\ldots,k$.
\end{proof} 

A point $y \in F(\partial D)$ is a {\it generic critical value} if its preimages are regular points together with a single point  $x \in \cC$ which is a fold. Similarly,
$y \in F(\partial D)$ is a {\it generic boundary value} if its preimages contain regular points and a single point  $x \in \partial D$ for which $F$ extends as a local homeomorphism in an open neighborhood of $x \in X$. 

The following result completes the validation of the geometric model.

\medskip

\begin{theo} \label{adjacenttiles}
	Let $F$ be as in Theorem \ref{covers}. Suppose two bounded tiles in the image of $F$ have a common point $y$ in their boundaries which is a generic critical value (resp. a generic boundary value). Then the number of preimages of points in both tiles differ by two (resp. by one). 
\end{theo}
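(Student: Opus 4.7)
My plan is to reduce the global statement to a local count at each preimage of the common boundary point $y$. First I would show that $F^{-1}(y)$ is finite: b-properness on the bounded open set $U$ makes $F^{-1}(y) \cap \overline U$ compact, while regular preimages are isolated by the inverse function theorem, the single fold preimage is isolated via the local normal form $(t,z) \mapsto (t^2,z)$ supplied by Theorem \ref{trespassing}, and the single boundary preimage is isolated by the local homeomorphism extension in the hypothesis. Writing $F^{-1}(y) = \{x_1,\dots,x_k\}$, I would choose pairwise disjoint neighborhoods $V_i \ni x_i$ so small that each one fits the relevant local model, and pick an open ball $W \ni y$ with $W \subset \bigcap_i F(V_i)$.

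The crux of the argument is the following localization claim: shrinking $W$ if necessary, every preimage of every $y' \in W$ lies in $\bigcup_i V_i$. I would prove this by contradiction using b-properness: if sequences $y_n \to y$ in $W$ admitted preimages $z_n \notin \bigcup_i V_i$, then a subsequence $z_{n_m}$ would converge in $\overline U$ to some $z_\infty$ with $F(z_\infty) = y$, forcing $z_\infty = x_i$ for some $i$, a contradiction. Once this is in hand, $|F^{-1}(y')|$ for $y' \in W$ equals the sum of the local preimage counts contributed by each $V_i$, so the difference between the counts on the two adjacent image tiles equals the sum over $i$ of the local differences across the separating arc through $y$.

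The remainder is bookkeeping with the three local models. At a regular $x_i$, $F|_{V_i}$ is a diffeomorphism and contributes $1$ preimage on both sides of the arc, so these cancel in the difference. At the fold, the normal form $(t,z) \mapsto (t^2,z)$ gives $2$ preimages on the side of the critical image where the first coordinate is positive and $0$ on the other side, for a net difference of $2$; one only needs to observe that the arc of $F(\cC)$ through $y$ is the separator between the two image tiles, so the fold contribution lands correctly. At a generic boundary preimage, the extended local homeomorphism sends $\partial D$ to the separating arc through $y$, so one side has exactly $1$ preimage inside $D$ and the other has $0$, a net difference of $1$. Summing, the total difference is precisely $2$ in the critical case and $1$ in the boundary case.

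I expect the principal obstacle to be the localization claim: one must use b-properness together with the boundedness of $U$ to ensure that no stray preimage of a point near $y$ appears far away from the chosen $V_i$. The subsequent local analysis at the three types of preimages is then a routine unpacking of the normal forms already invoked in Theorem \ref{trespassing} and in the definition of a generic boundary value; what ties the local picture to the global tile geometry is the observation, already implicit in Theorem \ref{covers}, that the number of preimages is locally constant on each image tile, so that the difference computed at $y$ governs the difference throughout the two adjacent tiles.
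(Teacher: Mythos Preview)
The paper does not actually supply a proof of Theorem~\ref{adjacenttiles}; it is stated, followed only by a remark about extensions (the example $F(x,y)=(x^2,y^2)$), and the text then moves on. So there is no paper proof to compare against.

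Your proposal is correct and is the natural argument. It mirrors closely the proof the paper \emph{does} give for Theorem~\ref{covers}: the localization step (no stray preimages of $y'\in W$ outside $\bigcup_i V_i$) is exactly the properness-plus-subsequence argument used there, and your local bookkeeping at regular, fold, and boundary preimages is the expected unpacking of the normal forms already introduced in Section~\ref{basicfolds}. One small point worth making explicit when you write it up: the compactness you invoke comes from restricting $F$ to the closed bounded set $\overline{T_{D_1}} \cup \overline{T_{D_2}}$ (or to $D$, if $D$ itself is bounded), since b-properness is only assumed on closed bounded subsets; your phrase ``$F^{-1}(y)\cap\overline U$'' is slightly loose because $F$ is only defined on the open set $U$. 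With that adjustment the argument goes through as you outline.
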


\medskip
The result admits natural extensions. The image of the function $F: \RR^2 \to \RR^2$ given by $F(x,y) = (x^2, y^2)$ is the positive quadrant, and leaving it through a boundary point  different from the origin implies a change of number of preimages equal to four: in a nutshell, the boundary point has two preimages which are folds.

\bigskip
Additional hypothesis naturally hold for usual functions, by arguments in the spirit of Sard's theorem. At a risk of sounding pedantic, 
examples are the following.

\smallskip \noindent (H1) $\cC, \partial D, F(\cC)$ and $F(\partial D)$ have empty interior.

\smallskip \noindent (H2) A dense subset of $\cC$ consists of folds.

\smallskip

Instead of verifying such facts in the examples, we take the standard approach in numerical analysis: one proceeds with the inversion process and accepts an occasional breakdown.

\section{Visualizable applications of the algorithm} \label{Visual}
The image of the function
$$ F: \mathbb{R}^2 \to  \mathbb{R}^2 \ , \quad (x,y) \mapsto (\cos (x) - x^2\cos (x) + 2x \sin (x), y)$$
is well represented by  a piece of cloth pleated  along vertical lines.
As indicated in Figure \ref{figbabado}, the critical set $\mathcal{C}=\{(k\pi,y), \  k \in \ZZ , \ y \in \mathbb{R}\}$ and its image $F(\cC)$ consist of vertical lines. All critical points are folds.  Points $p$ in the image are covered a different number of times, the number of  preimages of $p$. 
\begin{figure} [ht] 
	\begin{centering}
		\includegraphics[scale=0.30]{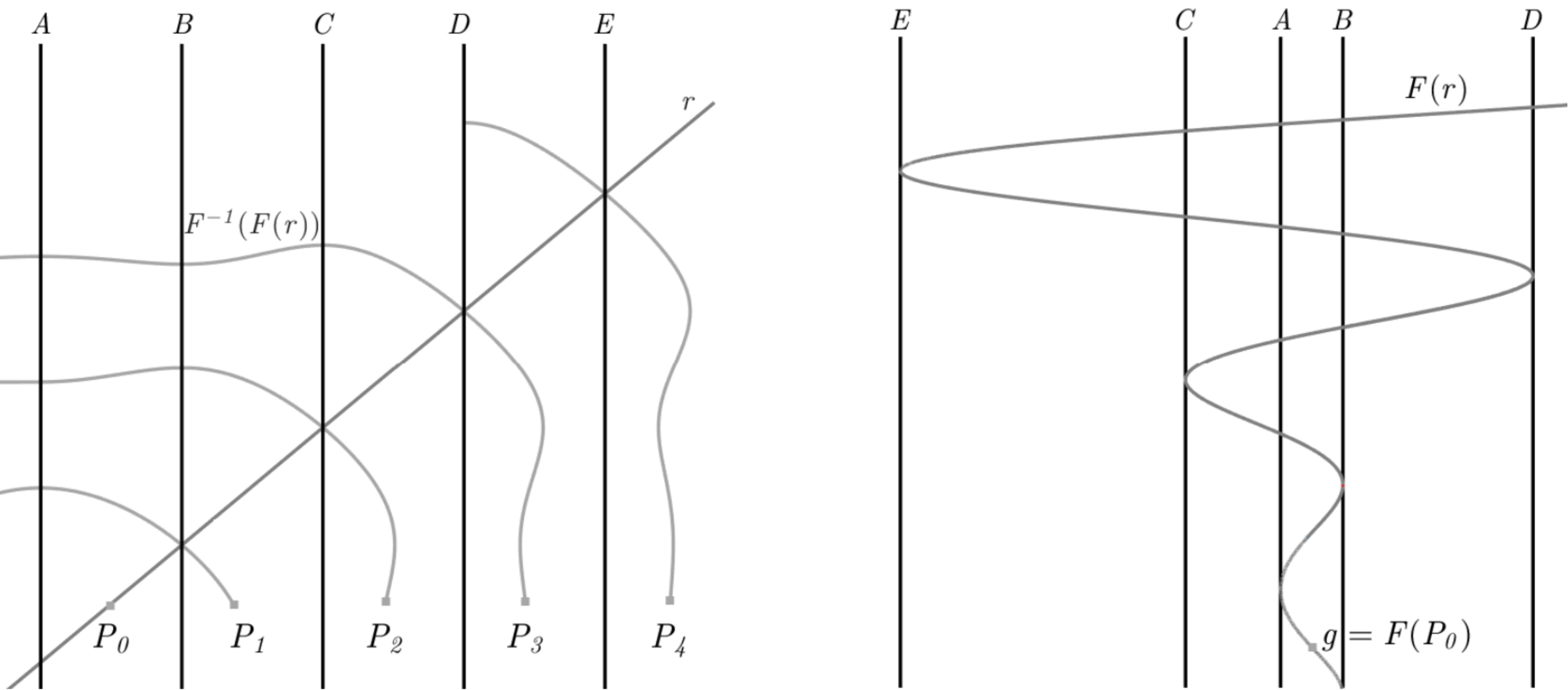}
		\caption{Pleats, the bifurcation diagram $\mathcal{B}$ and many preimages $P_i$ of $g=F(P_0)$.  }
		\label{figbabado}
	\end{centering}
\end{figure}

In Figure \ref{figbabado},  the line $r$ passes through $P_0$, a preimage of $g \in Y$. The image $F(r)$ oscillates among images of critical lines. Inversion of $F(r)$ yields $\mathcal{B}$, the {\it bifurcation diagram associated with} $r$ from $P_0$, the connected component of $F^{-1}(F(r))$ containing $P_0$.  Bifurcation points at the intersection of $r$ and $\mathcal{C}$ in turn gives rise to other preimages $P_i$ of $g=F(P_0)$. The line $r$ is chosen so as to intersect $\mathcal{C}$ abundantly. 

\bigskip
Consider now the smooth (not analytic) function
$$ F:\CC \to \CC \ , \quad z \mapsto  z^3 +\frac{5}{2}\ {\overline{z}}^2+z \ . $$
The critical set $\cC$ consists of the two curves $\mathcal{C} _1$  and $\mathcal{C} _2$ in Figure \ref{fig:estrela}, which roughly bound the three different regimes of the function: $z \sim 0, 1$ and $\infty$, where $F$ behaves  like $z$, $ {\overline{z}}^2$ and $z^3$ respectively. The  three  regimes already suggest that a line through the origin must hit the critical set at least four times.

We count preimages with Theorem \ref{adjacenttiles}.  From its behavior at infinity, $F$ is proper. Points in the unbounded tile in the image have three preimages, as for $z \sim \infty$, the function is cubic. The  unbounded tile in the domain covers the unbounded tile in the image of the right hand side  three times. Each of the five spiked tiles has five preimages, and the annulus surrounding the small triangle, seven. Finally, the interior of the small triangle has nine preimages: $F$ has  nine zeros. The flower,  in Figure \ref{fig:novinha}, illustrates these facts.

\begin{figure} [ht] 
	\begin{centering}
		\includegraphics[height=150pt,width=370pt]{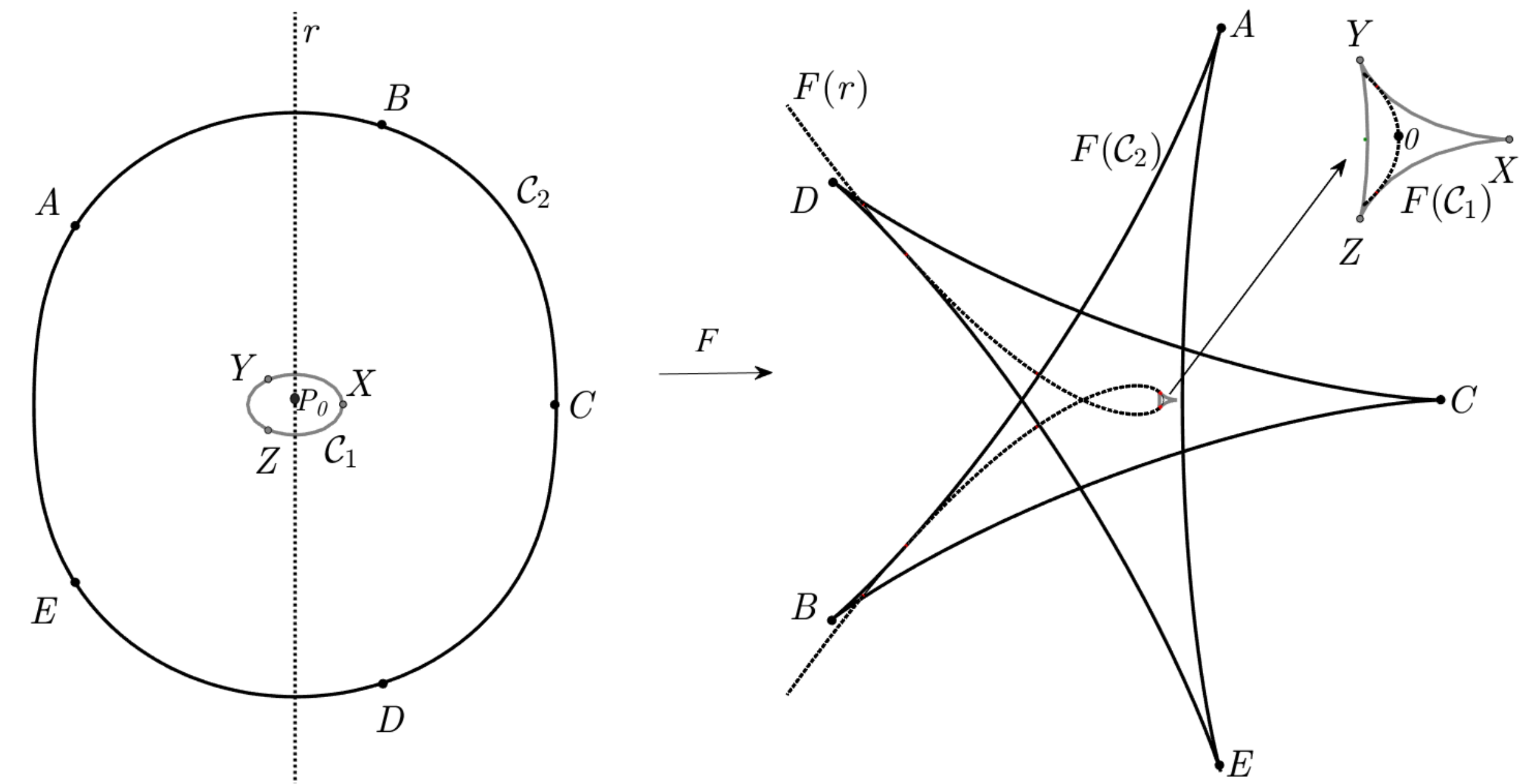}
		\caption{The critical set $\cC$, the line $r$ and their images.}
		\label{fig:estrela}
	\end{centering}
\end{figure}

\begin{figure} [ht] 
	\begin{centering}
		\includegraphics[height=150pt,width=370pt]{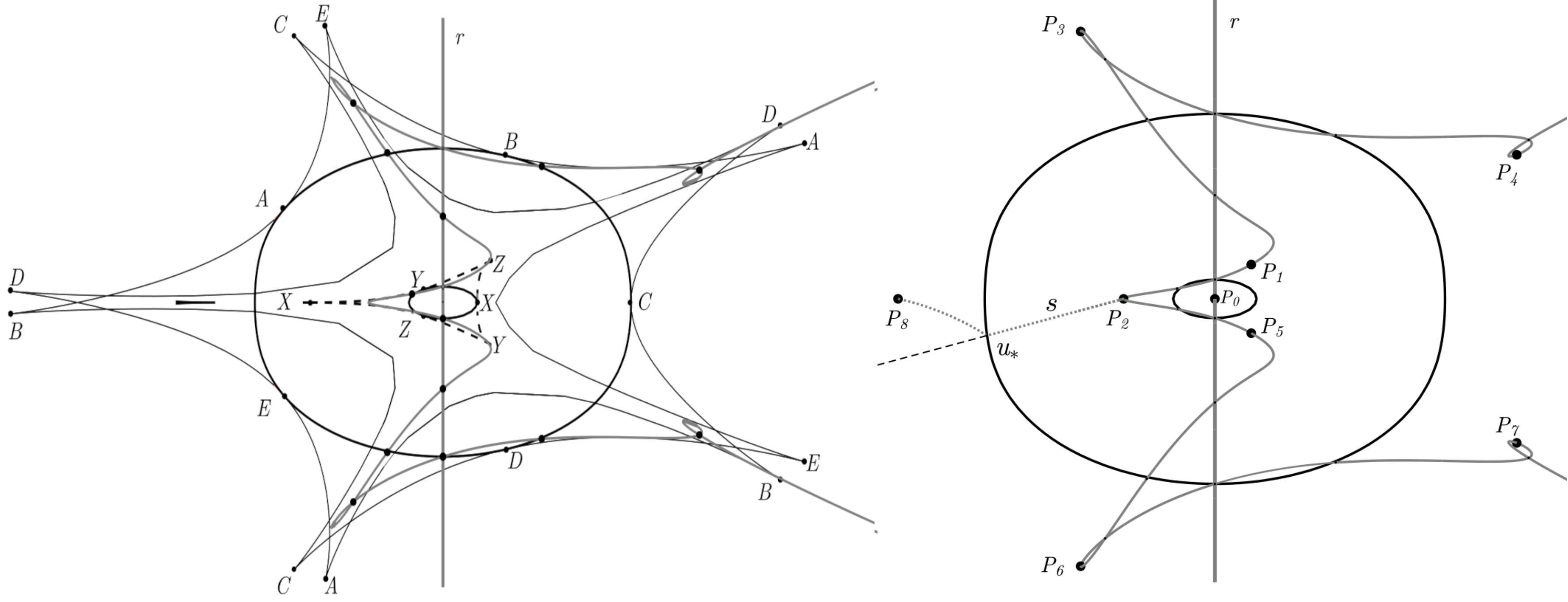}
		\caption{On the left, $\cC, r,  \mathcal{F}$ and $\mathcal{B}$. On the right, $r, \cC, \mathcal{B}$ and an extension yielding a zero $P_8$.}
		\label{fig:novinha}
	\end{centering}
\end{figure}



Let $r$ be the vertical axis, $P_0=(0,0) \in r$. Figure \ref{fig:estrela} shows $r$ and $F(r)$ and
Figure \ref{fig:novinha}  the  flower $ \mathcal{F} = F^{-1}(F(\mathcal{C}_1)) \cup F^{-1}(F(\mathcal{C}_2))$: dotted black lines and $\cC_1$ form  $F^{-1}(F(\mathcal{C}_1))$, while $F^{-1}(F(\mathcal{C}_2))$ consists of continuous black lines.  Amplification of $ \mathcal{F}$ shows  five thin triangles, one in each `petal' (one is visible in the petal on the left), each a full preimage of the triangle $XYZ$ in the image (the enlarged detail in Figure 	\ref{fig:estrela}). Add the other four preimages at tiles bounded by $F^{-1}(F(\mathcal{C}_1))$ to spot the nine zeros of $F$.  The flower is computationally expensive and is not computed in the algorithm we present.

On the left of Figure \ref{fig:novinha} are shown $\cC$, $\mathcal{F}$, $r$ and $\mathcal{B}$, the bifurcation diagram associated with $r$ from $P_0=(0,0)$. To emphasize $r$ and $\mathcal{B}$, we removed $\mathcal{F}$ on the right of Figure \ref{fig:novinha}. The sets $r$ and  $\mathcal{C}$ meet at four folds. The set $\mathcal{B}$ contains the line $r$ and eight of the nine zeros of $F$. The missing zero, $P_8$, is on the petal on the left of Figure \ref{fig:novinha}.


The  branches originating from the four critical points in $r$ lead by continuation to additional zeros $P_i$ of $F$.  What about the missing zero? For the half-line $s$ joining $P_2$ to infinity in  Figure \ref{fig:novinha}, bifurcation at $u_*\in F^{-1}(F(s))$  obtains $P_8$. For completeness,
$$
\begin{array}{c}
	P_1=( 0.2141, 0.3313) \ , \quad P_2 = (-0.5367,0.0000)\ , \quad P_3=( -0.7893, 2.5802)\ , \\
	\quad P_4=(1.7752,1.3903)\ ,  \quad P_5=( 0.2141,-0.3313)\ , \quad P_6=(-0.7893, -2.5802)\ ,\\
	P_7=(1.7752,-1.3903)\ , \quad P_8=(-1.8633,0.0000) \ .
\end{array}
$$

\subsection{Singularities and global properties of $F$} \label{winding}

\medskip
The example above shows that  curves $r$ with different intersection with the same critical component of $\cC$ may yield different zeros.  This will happen in other examples in the text: curves will be parallel lines, each generating a set of zeros. We provide an explanation for this fact based on the geometric model.

This is already visible for the simpler function $F$ in Figure \ref{fig:z2zbarra}.  Informally, $F$ at a fold point $p$ of $\cC$ looks like a mirror, in the sense that points on both sides of the arc $\cC_i$ of $\cC$ near $p$ are taken to the same side of the arc $F(\cC_i)$ near $F(p)$, as shown in Figure \ref{fig:extensao2}. In the example in Figure \ref{fig:z2zbarra}, at the three points  which are not folds, one might think of adjacent broken mirrors.  Close to the image $X = F(X_1)$ of such points, say $X_1$, there are points with three preimages close to $X_1$. 

In higher dimensions (in particular, infinite dimensional spaces), there are critical points at which arbitrarily many splintered mirrors coalesce. At such higher order singularities, there are points with clusters of preimages.  H. McKean  conjectured the existence  of arbitrarily deep singularities for the operator $F(u) = - u'' + u^2$, where $u$ satisfies Dirichlet conditions in $[0,1]$, and the result was proved in \cite{ARDILA}.

The original algorithm, $2 \times 2$, identifies and explore cusps, a higher order singularity, in the two dimensional context. The  lack of information about higher order singularities forces the current algorithm to perform more searches, by essentially choosing curves $r$ intercepting the critical set along different mirrors.

The region surrounded by $\cC$ in Figure \ref{fig:z2zbarra} has three mirrored images.
The presence of higher singularities is related to the fact that  there may be tiles on which the covering map induced by the restriction $F$  is not injective, as in the case of the annulus tile in the domain of $F$ of this section. This in turn requires the image tile to have nontrivial topology (more precisely, a nontrivial fundamental group), as simply connected domains are covered only by homeomorphisms.

\section{Discretized nonlinear Sturm-Liouville maps} \label{SturmLiouville}

We consider the nonlinear Sturm-Liouville operator between Sobolev spaces,
\begin{equation}
F:X=H^2([0,\pi]) \cap H^1_0([0,\pi]) \longrightarrow  Y=L^2([0,\pi]) \ , \quad F(u) = -u''-f(u)  \ .
\end{equation}
We compare two theorems which count solutions of $F(u) = g$ for special functions $g$ for the operator and its discretizations.
Recall that the linear operator $u \mapsto - u''$ acting on functions satisfying Dirichlet boundary conditions has eigenvalues equal to $\lambda_k = k^2, k=1, 2, \ldots$ and associated eigenvectors $\phi_k = \sin(kx)$.

\medskip
\begin{theo}[Costa-Figueiredo-Srikanth \cite{COSTA}]
Let $f:\mathbb{R} \to \mathbb{R}$ be a smooth,  convex function which is asymptotically linear with parameters $\ell_-$,  $\ell_+$,
\[
\lim_{x \to - \infty} f'(x) = \ell_-,  \quad \lim_{x \to \infty} f'(x) = \ell_+ \ ,\]
\[
\ell_- < \lambda_1 = 1, \ \ \lambda_k = k^2<\ell_+<(k+1)^2 = \lambda_{k+1}, \quad \ell_-, \ell_+ \notin \{k^2, k \in \mathbb{N}\} \ . \]
Then, for large $t > 0$, $F(u) = -t\sin(x) $
has exactly $2k$ solutions.
\end{theo}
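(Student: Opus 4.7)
The plan is to attack this variationally, reducing to a finite-dimensional problem via Lyapunov--Schmidt and then analyzing an asymptotic piecewise-linear model. Solutions of $F(u) = -t\sin x$ are critical points of
\[
J_t(u) \e \tfrac12 \int_0^\pi (u')^2\, dx - \int_0^\pi \mathcal{F}(u)\, dx + t \int_0^\pi u\sin x\, dx,
\qquad \mathcal{F}' = f.
\]
Split $X = V \oplus W$ with $V = \spam\{\phi_1,\ldots,\phi_k\}$. Since $f$ is convex and asymptotically linear, $\ell_- \le f'(s) \le \ell_+$, and on $W$ the quadratic form $\int (w')^2 - \int \ell_+ w^2$ is bounded below by $(1-\ell_+/\lambda_{k+1})\|w'\|_2^2 > 0$. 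Thus $w \mapsto J_t(v+w)$ is uniformly strictly convex on $W$, yielding a unique smooth solver $w = w_t(v) \in W$ of the $W$-projection of the Euler--Lagrange equation. The reduced functional $\tilde J_t(v) := J_t(v + w_t(v))$ on the $k$-dimensional space $V$ has exactly the same critical points, counted with multiplicity, as $J_t$.

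Next I would rescale. Set $v = t\xi$ and $\Phi_t(\xi) := t^{-2}\tilde J_t(t\xi)$. Using the asymptotic slopes and dominated convergence, $\Phi_t \to \Phi_\infty$ in $C^1_{loc}(V)$, where $\Phi_\infty$ is the Lyapunov--Schmidt reduction of
\[
\Psi_\infty(\xi+\omega) \e \tfrac12 \int (\xi'+\omega')^2 - \tfrac{\ell_+}{2}\int [(\xi+\omega)^+]^2 - \tfrac{\ell_-}{2}\int[(\xi+\omega)^-]^2 + \int (\xi+\omega)\sin x.
\]
Critical points of $\Phi_\infty$ on $V$ are in bijection with solutions $v \in X$ of the asymptotic problem $-v'' - \ell_+ v^+ + \ell_- v^- = -\sin x$ with Dirichlet conditions.

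The heart of the argument is counting solutions of this limit problem. Because the equation is a second-order ODE, a shooting/Pr\"ufer approach classifies nontrivial solutions by the number $j$ of interior zeros in $(0,\pi)$: the conditions $\ell_- < \lambda_1$ and $\lambda_k < \ell_+ < \lambda_{k+1}$ force exactly $j \in \{0,1,\ldots,k-1\}$, and for each such $j$ a phase-plane (or Fu\v{c}\'ik-spectrum) computation produces exactly two solutions, distinguished by initial sign. This yields $2k$ solutions total, each nondegenerate (the second variation, controlled by the strict inequalities $\ell_\pm \notin \{\lambda_j\}$, is nonsingular).

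Finally, persistence: by nondegeneracy and the implicit function theorem each critical point of $\Phi_\infty$ perturbs to a critical point of $\Phi_t$ for $t$ large, producing at least $2k$ solutions of $F(u) = -t\sin x$. For the reverse inequality, an a priori bound $\|u\|/t = O(1)$ (obtained from testing the equation against $u$ and using $\ell_- < \lambda_1$ for coercivity of the negative part, together with the $W$-uniform convexity on the positive part) shows that any sequence of solutions $u_t/t$ is precompact and accumulates on solutions of the limit problem; combined with local uniqueness near each nondegenerate limit, this caps the count at $2k$. The main obstacle I anticipate is step three, the Fu\v{c}\'ik-type counting for the piecewise-linear limit equation, which is where the quantitative hypotheses on $\ell_\pm$ are truly used and where convexity of $f$ must be leveraged to exclude spurious solutions created by the limit passage.
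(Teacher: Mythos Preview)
The paper does not prove this theorem; it is quoted from Costa, Figueiredo and Srikanth \cite{COSTA} as background for the numerical experiments in Section~\ref{SturmLiouville}. As the title of \cite{COSTA} indicates, the original argument is organized around the Morse index: one produces solutions realizing each admissible index and then uses convexity of $f$ together with Sturm oscillation to rule out extras. Your route---Lyapunov--Schmidt onto $V=\spam\{\phi_1,\ldots,\phi_k\}$, rescaling to the piecewise-linear limit $-v'' - \ell_+ v^+ + \ell_- v^- = -\sin x$, counting there, and perturbing back---is a different but recognizable strategy from the jumping-nonlinearity literature, and the reduction step (uniform convexity on $W$ from $f'\le \ell_+<\lambda_{k+1}$) is set up correctly.

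The genuine gap is the nondegeneracy step, which you treat as automatic. You assert that the $2k$ limit solutions are nondegenerate because $\ell_\pm \notin \{\lambda_j\}$, but the linearization at a limit solution $v$ is $w\mapsto -w'' - q(x)w$ with $q = \ell_+$ on $\{v>0\}$ and $q = \ell_-$ on $\{v<0\}$; whether $0$ lies in its spectrum depends on the nodal pattern of $v$, not on $\ell_\pm$ missing the Dirichlet spectrum of the constant-potential operator. Establishing nondegeneracy for every one of the $2k$ limit solutions is itself an oscillation-theoretic computation of exactly the kind that drives the Morse-index argument in \cite{COSTA}, and it is where the hypothesis $\lambda_k < \ell_+ < \lambda_{k+1}$ really bites. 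Without it, both halves of your conclusion fail: persistence via the implicit function theorem needs invertibility of the limit linearization, and your upper bound relies on local uniqueness near each nondegenerate limit. You correctly flag the Fu\v{c}\'ik-type counting as the delicate step, but the nondegeneracy you need to close the argument is at least as delicate and is not a consequence of the spectral gap condition alone.
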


\medskip

The standard discretization of $F$ is defined over the regular mesh
\[ \overline{I_h}=\left\{x_i = ih \ , \ i=0,...,n+1 \ ,   h=\frac{\pi}{n+1} \right\} \ .\]
As usual, for points $x_i \in I_h=\overline{I_h} \setminus \{x_0,x_{n+1}\}$, approximate
$$
-u''(x_i) \sim \frac{1}{h^2} \left(-u(x_{i+1})+2u(x_i)-u(x_{i-1})\right)
$$
and consider the tridiagonal,  $n \times n$ symmetric  matrix $A^h$ with diagonal entries $2/h^2$ and remaining nonzero entries equal to $-1/h^2$. Its (simple) eigenvalues are   \[ \lambda_k^h \ = \frac{2}{h^2} -  \frac{2}{h^2}\cos(\pi-kh)  , \ k = 1, \ldots, n \]
with associated eigenvectors $\phi_k^h = \sin(k I_h)= (\sin(kx_i))$ for $x_i \in I_h$.
Similarly, set $u^h = u(I_h)$ and $f(u^h)=(f(u(I_h))^T$. Finally, the discretized operator is 
\[ F^h: \mathbb{R}^n \to \mathbb{R}^n , u^h \mapsto  A^h u^h - f(u^h) . \]

\begin{theo}[Teles-Tomei \cite{TELES}] Let $f$ be smooth, convex, asymptotically linear function with parameters $\ell_-$ and $\ell_+$ satisfying
$  \ell_-< \lambda_1^h   \ ,  \lambda_k^h < \ell_+  < \lambda_{k+1}^h $.
Let $y,p \in \mathbb{R}^n $, $p_i > 0$. Then, for large parameters $t >0$ and $\ell_+ $, the equation
\[F^h(u^h)= A^hu^h-f(u^h)=y -tp,\]
has exactly $2^n$ solutions.
\end{theo}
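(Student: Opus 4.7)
The plan is to rescale by $t$ so that $F^h(u)=y-tp$ becomes a small perturbation of a piecewise-linear model whose solutions can be counted orthant by orthant. Setting $v=u/t$, the equation reads
$$ A^h v \, - \, f(tv)/t \; = \; y/t - p. $$
By the asymptotic linearity of $f$, as $t\to\infty$ the right side tends to $-p$ and $f(tv)/t \to g(v)$, where $g(v)_i=\ell_+ v_i$ if $v_i>0$ and $\ell_- v_i$ if $v_i<0$, uniformly on compact sets of $\RR^n$ disjoint from the coordinate hyperplanes. For each sign pattern $\epsilon\in\{+,-\}^n$, write $\Lambda_\epsilon=\diag(\ell_{\epsilon_1},\ldots,\ell_{\epsilon_n})$ and let $O_\epsilon=\{v:\epsilon_i v_i>0,\ \forall i\}$; on $O_\epsilon$ the limit equation becomes the linear system $(\Lambda_\epsilon-A^h)v=p$.

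I would then show that, for $\ell_+$ large, this system has for each $\epsilon$ a unique solution $v_\epsilon$ lying in $O_\epsilon$. Partition the indices by $\epsilon$ into $I_+\cup I_-$ and write $\Lambda_\epsilon-A^h$ in $2\times 2$ block form. A Schur complement computation over the dominant block $\ell_+ I - A_{++}$ (invertible for $\ell_+$ large) reduces the system to $(\ell_- I - A_{--})v_- = p_- + O(1/\ell_+)$, with $v_+$ of order $1/\ell_+$. Now $\ell_-<\lambda_1^h\le\lambda_1(A_{--})$ by Cauchy interlacing; moreover, $A_{--}-\ell_-I$ is a block-diagonal sum of irreducible M-matrices, one block per maximal run of consecutive indices in $I_-$, each inheriting its positive diagonal and non-positive off-diagonal from $A^h-\ell_-I$. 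Hence $(A_{--}-\ell_-I)^{-1}$ has strictly positive entries, so $v_- = -(A_{--}-\ell_-I)^{-1}p_-$ is negative componentwise. The leading term of $v_+$ is $(p_+ + A_{+-}v_-)/\ell_+$; since $p_+>0$, $A_{+-}$ has non-positive entries, and $v_-<0$, one gets $v_+>0$. Thus $v_\epsilon\in O_\epsilon$.

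With $2^n$ nondegenerate solutions of the limit equation in hand, the implicit function theorem applied to the rescaled equation at each $v_\epsilon$ produces a unique local branch $v_\epsilon(t)$ for $t$ large, equivalently a branch $u_\epsilon(t) = t v_\epsilon + O(1)$ of solutions to the original equation; invertibility of the Jacobian $A^h - \Lambda_\epsilon$ is exactly what was established in the previous step. This accounts for at least $2^n$ distinct solutions, one per sign pattern $\epsilon$.

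The hardest step is completeness: no further solutions exist for $t$ large. The natural route is compactness. The equation forces $\|u\|\to\infty$, and asymptotic linearity of $f$ gives $\|u\|=O(t)$, so any sequence $u(t_k)$ of solutions admits a subsequence with $u(t_k)/t_k\to v_\ast$ solving $A^h v - g(v) = -p$. The delicate point is to rule out a vanishing coordinate of $v_\ast$, since only then does $v_\ast$ live in some $O_\epsilon$ and coincide, by the uniqueness above, with $v_\epsilon$, after which the implicit function theorem branch identifies $u(t_k)$ with $u_\epsilon(t_k)$. I expect this nondegeneracy on the coordinate hyperplanes to require the combined use of the strict positivity of $p$, the M-matrix structure of $A^h$, and the convexity of $f$ (the last making the one-sided slopes of $g$ genuine limits of $f'$ and pinning down signs). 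Once this is done, the total count is exactly $2^n$.
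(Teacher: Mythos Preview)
This theorem is quoted from \cite{TELES} and is not proved in the present paper, so there is no in-paper argument to compare against. The paper offers two hints about the method in \cite{TELES}: for the piecewise linear model one ``solve[s] a linear system in each orthant and check[s] if the solution belongs to the orthant,'' and geometrically the images $F(\cC_j)$ wind $\binom{n-1}{j-1}$ times around a fixed ray.

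Your rescaling reduces the smooth problem to precisely that orthant-by-orthant count, so the strategy is aligned with the cited source. The Schur complement and M-matrix reasoning correctly place each $v_\epsilon$ in its open orthant once $\ell_+$ is large (uniformly over the $2^n$ patterns, since there are finitely many). The step you flag as delicate --- excluding a zero coordinate in a subsequential limit $v_\ast$ --- is in fact already handled by your existence argument: if $(v_\ast)_j=0$, extend the sign pattern arbitrarily at $j$ to some $\epsilon$; the equation $(\Lambda_\epsilon-A^h)v_\ast=p$ still holds (row $j$ is unaffected since $(v_\ast)_j=0$), hence $v_\ast=v_\epsilon\in O_\epsilon$, a contradiction. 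Passage to the limit at such a coordinate is harmless because boundedness of $f'$ gives $|f(s)|\le C|s|+|f(0)|$, whence $f(t_k(v_k)_j)/t_k\to 0$. A routine compactness wrap-up (bounding $\|u\|/t$ via the invertibility of every $A^h-\Lambda_\epsilon$, identifying each cluster point with some $v_\epsilon$, then invoking local uniqueness from the implicit function theorem) closes the count at exactly $2^n$. In your route, convexity of $f$ does no independent work beyond what smoothness and asymptotic linearity already supply.
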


As $h \to 0$, most solutions of $F(u) = g$ disappear. The discretized problem is an interesting test case for our algorithm:  $F^h(u)=g$ has abundant solutions.

The critical set of $F$ in both contexts has been studied in \cite{BURGHELEA} and \cite{TOMEIEBUENO}. {\it The  property below  is all it takes to implement the algorithm.} Notice that Jacobians in both cases consist of linear operators with eigenvalues  labeled in increasing order.

\medskip

\begin{theo} Lines in the domain with direction given by a positive function (vector, in the discrete case) intercept the critical set at points abundantly. Essentially, there is one intersection associated with one eigenvalue.
	\end{theo}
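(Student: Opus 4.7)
The plan is to characterize the critical set by the eigenvalues of the Jacobians and then use min-max monotonicity along the line together with the asymptotic hypotheses on $f'$. Throughout, write $DF(u) = -D^2 - f'(u)\cdot$ in the continuous case and $DF^h(u^h) = A^h - \operatorname{diag}(f'(u^h))$ in the discrete case. A point $u$ is critical exactly when $0$ is an eigenvalue of $DF(u)$. Label the eigenvalues of $DF(u)$ in increasing order as $\mu_1(u) < \mu_2(u) < \cdots$; they are simple because the operators are self-adjoint with a Sturm--Liouville / Jacobi structure. By the Rayleigh--Ritz min-max formula, each $\mu_k$ is continuous on $X$ (resp.\ $\RR^n$).

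First I would establish \emph{monotonicity} of $t \mapsto \mu_k(u_0 + tp)$ when $p$ is a positive function (vector). Convexity of $f$ yields $f'$ nondecreasing, so, since $p>0$ pointwise, $f'(u_0+tp)$ is pointwise nondecreasing in $t$. The min-max expression
\[
\mu_k(u_0+tp) \;=\; \inf_{\dim V = k} \ \sup_{v\in V,\, \|v\|=1}\Bigl(\langle -D^2 v,v\rangle - \int f'(u_0+tp)\,v^2\Bigr)
\]
is then nonincreasing in $t$. For strict monotonicity (needed to get uniqueness of the intersection and not just existence), I would plug in the normalized eigenvector $\phi_k(t)$ achieving the min-max at a given $t$: differentiating yields $\tfrac{d}{dt}\mu_k = -\int f''(u_0+tp)\,p\,\phi_k(t)^2$, which is strictly negative as soon as $f''>0$ somewhere on the support of $\phi_k(t)\,p$. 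Since eigenvectors of Sturm--Liouville (or Jacobi) operators have only isolated zeros, this happens automatically under the convexity hypothesis, giving strict decrease.

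Next I would compute the \emph{asymptotic limits}. Because $p>0$, $u_0 + tp \to \pm\infty$ pointwise as $t\to\pm\infty$, and the asymptotic linearity of $f$ gives $f'(u_0+tp)\to \ell_\pm$ uniformly on compact subsets (and in the discrete case, trivially, entrywise). Continuity of the spectrum under uniform perturbation of the potential then yields
\[
\lim_{t\to -\infty}\mu_k(u_0+tp) \;=\; \lambda_k - \ell_-, \qquad \lim_{t\to +\infty}\mu_k(u_0+tp) \;=\; \lambda_k - \ell_+,
\]
and the discrete analogue with $\lambda_k^h$. Under the standing assumptions $\ell_- < \lambda_1$ and $\lambda_k < \ell_+ < \lambda_{k+1}$ (respectively with $\lambda^h$), the first limit is strictly positive for every $k$, while the second is strictly negative for $k=1,\dots,k$ (resp.\ for all $k=1,\dots,n$ in the discrete case when $\ell_+$ is large). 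The intermediate value theorem combined with the strict monotonicity above gives, for each such index, exactly one $t_k$ with $\mu_k(u_0+t_k p)=0$, i.e.\ exactly one intersection of the line $\{u_0+tp\}$ with the connected component $\cC_k=\{u:\mu_k(u)=0\}$ of $\cC$.

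The main obstacle is the \emph{strict} monotonicity, since mere min-max monotonicity only yields existence of an intersection, not the claim that each eigenvalue accounts for precisely one. I expect to handle this by a Hellmann--Feynman computation using that eigenvalues are simple and eigenvectors depend analytically on $t$ on the complement of the (discrete) crossing set, so that $\tfrac{d}{dt}\mu_k < 0$ wherever $f''>0$ on a set of positive measure intersecting $\{\phi_k\neq 0\}$; convexity together with the non-vanishing of eigenvectors on open sets takes care of this. The uniform convergence of $f'(u_0+tp)\to\ell_\pm$ in the continuous (Sobolev) setting is the other technical point, but it follows from dominated convergence once $f'$ is bounded, which is automatic from asymptotic linearity.
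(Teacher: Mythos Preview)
Your argument is correct and follows the same min-max framework the paper invokes (it defers to Proposition~\ref{Minmax}), but you actually prove more than the paper does, at the cost of an extra hypothesis.

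The paper's proof only establishes \emph{at least} $k$ intersections: it shows directly that $\mu_1(u(t))>0$ for $t\ll 0$ via the Rayleigh quotient, and that $\mu_1,\dots,\mu_k<0$ for $t\gg 0$ by the Rayleigh--Ritz upper bound with the trial space $\spam\{\phi_1,\dots,\phi_k\}$; the intermediate value theorem then gives $k$ crossings. Convexity of $f$ is \emph{not} used, and no uniqueness per eigenvalue is claimed. Your route instead exploits convexity (which is indeed assumed in the surrounding Costa--Figueiredo--Srikanth and Teles--Tomei settings) to get monotonicity of $t\mapsto\mu_k(u_0+tp)$ via Hellmann--Feynman, upgrading the conclusion to \emph{exactly one} crossing per relevant eigenvalue. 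You also assert the exact limits $\mu_k\to\lambda_k-\ell_\pm$, whereas the paper is content with the sign; your claim is correct but note that in the continuous case $p=\phi_1$ vanishes on $\partial\Omega$, so the convergence of $f'(u_0+tp)$ is only pointwise a.e., not uniform on the whole closed domain --- dominated convergence (which you do mention) is the right tool, and the paper uses it too. In short: same skeleton, but your monotonicity/uniqueness layer is an addition the paper does not carry out.
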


\medskip
This follows from min-max arguments on the Jacobians $DF(u)$ along such lines. The argument is given in a more general context --- for Jacobians given by semilinear elliptic operators ---  in  Proposition  \ref{Minmax}.

\medskip 
 Notice the affinity beween such lines and rays from the origin in the examples of Section \ref{Visual}. As in Section \ref{winding}, different curves might lead to a different set of solutions obtained by bifurcation and continuation.

\subsection{Piecewise linear geometry and a data base of solutions} \label{table}
\label{sec:TELES-TOMEI}

We follow \cite{COSTA} and \cite{TELES}, and consider a piecewise linear map
\begin{equation} \label{piecewise}
f(x) = \begin{cases}
	\ell_- x\ , \ x < 0   \\
	\ell_+ x\ , \ x > 0
\end{cases} \ .
\end{equation}
The associated function $F^h$ is globally continuous and linear when restricted to each orthant of $\mathbb{R}^n$. The lack of differentiability leads us to be careful with the identification of the critical set $\cC$, an issue we address in Section \ref{numericsSL}.

For the case of the piecewise linear function $f$, an explicit solution for the continuous problem was given by Lazer and McKenna \cite{LAZERANDMCKENNA}.
For $\ell_-<\lambda_1 = 1 <\ell_+$ and $t>0$, two solutions of $-u''-\ell_+ u^{+}+\ell_- u^{-}=-t\sin(x)$ are
\[\frac{ t\sin(x)}{\ell_+ -1} > 0  \ , \quad \frac{t\sin(x)}{\ell_- -1} < 0 \ . \]
For the discretized operator, it is easy to verify that two solutions are given by
\begin{equation} \label{LazerMcKenna}
	\frac{t\sin(I_h)}{\ell_+ -\lambda_1^h} \ \ \
\text{and} \ \ \
\frac{t\sin(I_h)}{\ell_- - \lambda_1^h},  \quad \lambda_1^h=\frac{2}{h^2} \left(1-\cos(h) \right) \ .
\end{equation}
Again, the entries of the first (resp. second) solution are positive (resp. negative).

\bigskip

Following \cite{TELES}, we consider lines $r$ aligned to the (normalized, positive) eigenvector $\phi_1^h$ associated to the smallest eigenvalue $\lambda_1^h$ of $A^h$. Set $V = \langle \phi_1^h \rangle$, $H = V^\perp$ and split  $\mathbb{R}^n = H \oplus  V$. The critical set $\mathcal{C}$ of the map $F^h: \mathbb{R}^n \to \mathbb{R}^n$ consists of $n$ hypersurfaces $\mathcal{C}_j, j=1, \ldots,n$.  
Each $\mathcal{C}_j$ is a graph of a function $c_j : H \to V$, and in particular projects diffeomorphically from $\mathbb{R}^n$ to $H$. Thus, topologically, $\mathcal{C}_j$ is trivial.
Each line $r$ intercepts all the critical components $\mathcal{C}_j$ of $F^h$.

The images $F(\mathcal{C}_j)$ are more complicated: for $\ell_+ >> 0$, $F(\mathcal{C}_j)$ wraps around the line
$\{t(1, 1, \ldots, 1), t \in \mathbb{R} \}\subset \mathbb{R}^n$ substantially: $\binom{n-1}{j-1}$ times! It is this  geometric turbulence which gives rise to the abundance of preimages for appropriate right hand sides. Said differently, some tiles in the image have nontrivial fundamental group and are abundantly covered. The reader should compare this situation with the second example in Section \ref{Visual}: cubic behavior at infinity leads winding of the image of the outermost critical curve and to nine zeros of $F$.

\bigskip

Set $n = 15$, $g^h = -1000 \sin(I_h)$, $f$ as in (\ref{piecewise}) with asymptotic parameters

\[ \ell_-=\frac{\lambda_{1}^h}{2} \ \ \text{and} \ \  \ell_{+}^k=\frac{\lambda_k^h+\lambda_{k+1}^h}{2}, \ \text{for} \ k=1,...,14 \ ,  \ \ \ell_{+}^{15}=\lambda_{15}^h+\frac{\lambda_{1}^h}{2}.\]
The number of solutions $N$ for different $\ell_+^k$ is given below \cite{TELES}.

\begin{table}  [ht] 
\small
\centering
\begin{tabular}{|c|c|c|c|c|c|c|c|c|c|c|c|c|c|c|c|}
	\hline
	$k$ &1&2&3&4&5&6&7&8&9&10&11&12&13&14&15  \\ \hline
	$N$ & 2 & 4 & 6 & 8 & 12 & 12 &22 & 24 & 32 & 100 & 286 &634 & 972 & 1320 & 2058\\ \hline
\end{tabular}
\label{Ng}
\end{table}

For small $k$, $N(g^h)$ is of the form $2k$, as for the continuous counterpart. At some point, $N(g^h)$ becomes exponential, $2^k$. The numerics in \cite{TELES} is elementary: solve a linear system in each orthant and check if the solution  belongs to the orthant, generating a reliable data bank of solutions.  Here, we compare solutions obtained by inducing bifurcations with those in the data bank. Another computational alternative, the shooting method for the discretized problem, did not perform well: we intend to explore the issue in a forthcoming paper.

\subsection{The critical set of $F^h$ for a piecewise linear $f$} \label{numericsSL}

We now relate criticality and the piecewise linear nature of the function $f$.
An (open) orthant $\cO \subset \RR^n$ is defined by the sign of its vector entries. For $v \in \cO$, $f(v)$ is multiplication by a diagonal matrix $D^\cO$, with diagonal entries equal to $\ell_-$ or $\ell_+$, according to the sign of the associated entry of $v$. Thus, the continuous map $F^h: \RR^n  \to \RR^n$ is of the form $F^h = A^h - D^{\cO}$ when restricted to $\cO$.

Generically (i.e., for an open, dense set of pairs $(\ell_-, \ell_+)$), all the matrices $A^h - D^{\cO}$ are invertible. Assuming this, in each orthant $\cO$ the (linear) map is trivially a local diffeomorphism. The critical set of $F^h$ makes (topological) sense: it is the set in which $F^h$ is not a local homeomorphism.
Say a vector $v$ is in the boundary of exactly two orthants, $\cO_1$ and $\cO_2$ (equivalently, $v$ has a single entry equal to zero). If $\sign \det (A^h - D^{\cO_1}) = \sign \det (A^h - D^{\cO_1})$, the map $F^h$  is a local homeomorphism at $v$. If instead $\sign \det (A^h - D^{\cO_1}) =  -\sign \det (A^h - D^{\cO_1})$, $F^h$ near $v$ is a topological fold.  The critical set consists of pieces of coordinate planes between two orthants in which $\det (A^h - D^{\cO})$ changes sign. In the nongeneric situation, whenever $A^h - D^{\cO}$ is not invertible, we  include $\cO$ in the critical set.

The critical set is connected. A (generic) line $r$ parallel to the vector $\phi_1^h$  intercepts $\mathcal{C}$ in as many eigenvalues of $A^h$ there are between $\ell_-$ and $\ell_+$.

\subsubsection{The visualizable case,  $n=2$} \label{visualLS}

Due to the lack of differentiability of $f$, we cannot use Newton's method as a local solver. We present the alternative approach in a visualizable example. If $n=2$, we have $h = \pi/3$ and the  matrix $A^h$
has eigenvalues  $\lambda_1 \approx 0.9119$ and $\lambda_2 \approx 2.7357$. For the values $\ell_-=-1$ and $\ell_+=2$ or $\ell_+=4$, $F^h = A^h - D^{\cO}$ in the interior of each quadrant is invertible.
Figure 7 
describes the critical sets and their images for the two values of $\ell_+$. In each quadrant of the domain we indicate $\sign \det (A^h - D^\cO)$. In each connected component of  $\mathbb{R}^2 \setminus F^h(\mathcal{C}) $ we specify instead the number of preimages. The fact that $\cC$ for $\ell_+ = 4$ is not a (topological) manifold is innocuous. 

\begin{figure}  [ht] 
\centering
\begin{minipage}{0.45\textwidth}
	\centering
	\includegraphics[width=1.0\textwidth]{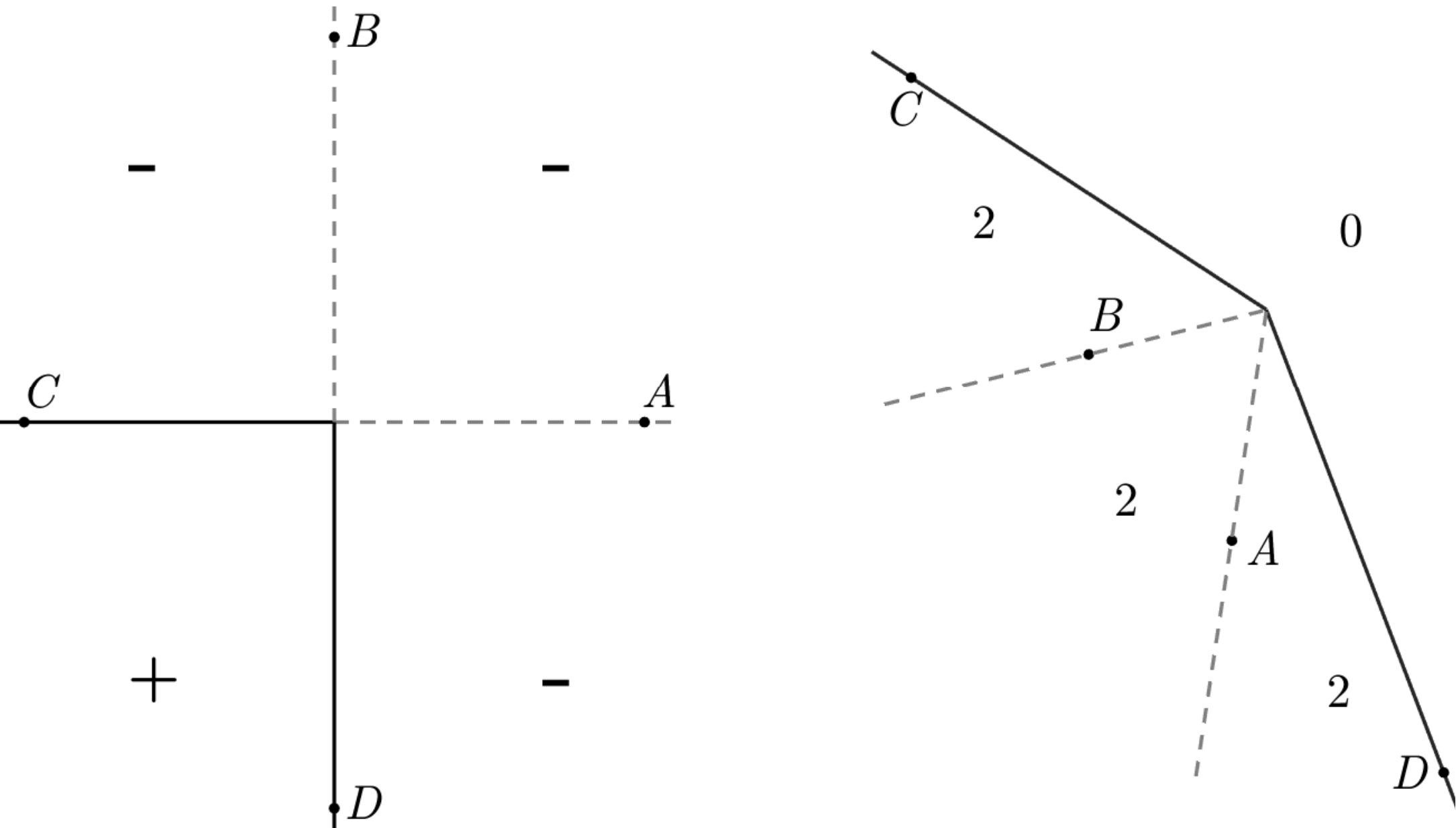} 
	\label{fig4a}
\end{minipage}
\begin{minipage}{0.45\textwidth}
	\centering
	\includegraphics[width=1.0\textwidth]{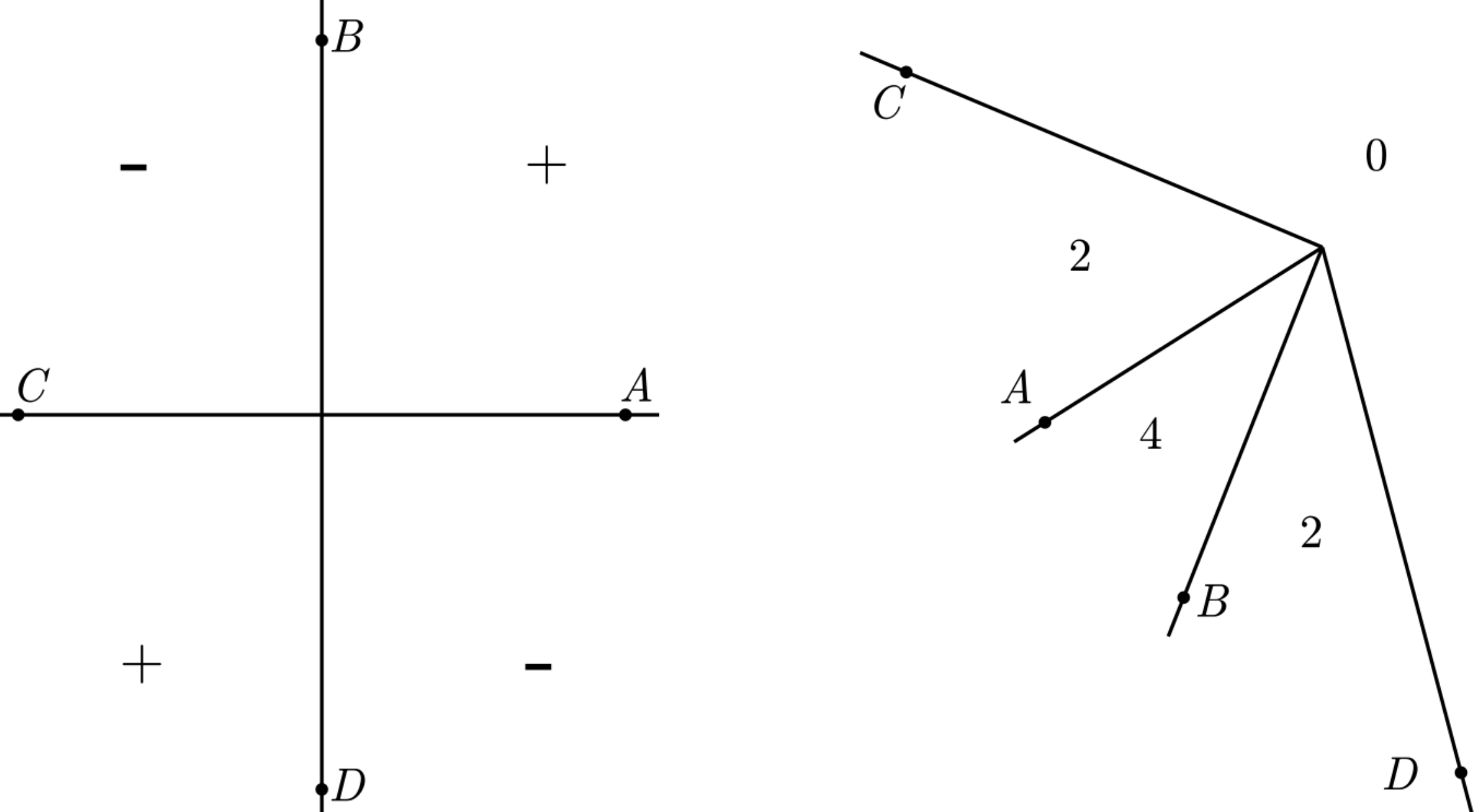} 
\end{minipage}
\hfill

\caption{$\mathcal{C}$ and $F^h(\mathcal{C})$ for  $\ell_-=-1, \ell_+=2$ and for $\ell_-=-1, \ell_+=4$}
\end{figure}

\begin{figure}  [ht] \label{fig4b}
\centering
\begin{minipage}{0.45\textwidth}
	\centering
	\includegraphics[width=0.9\textwidth]{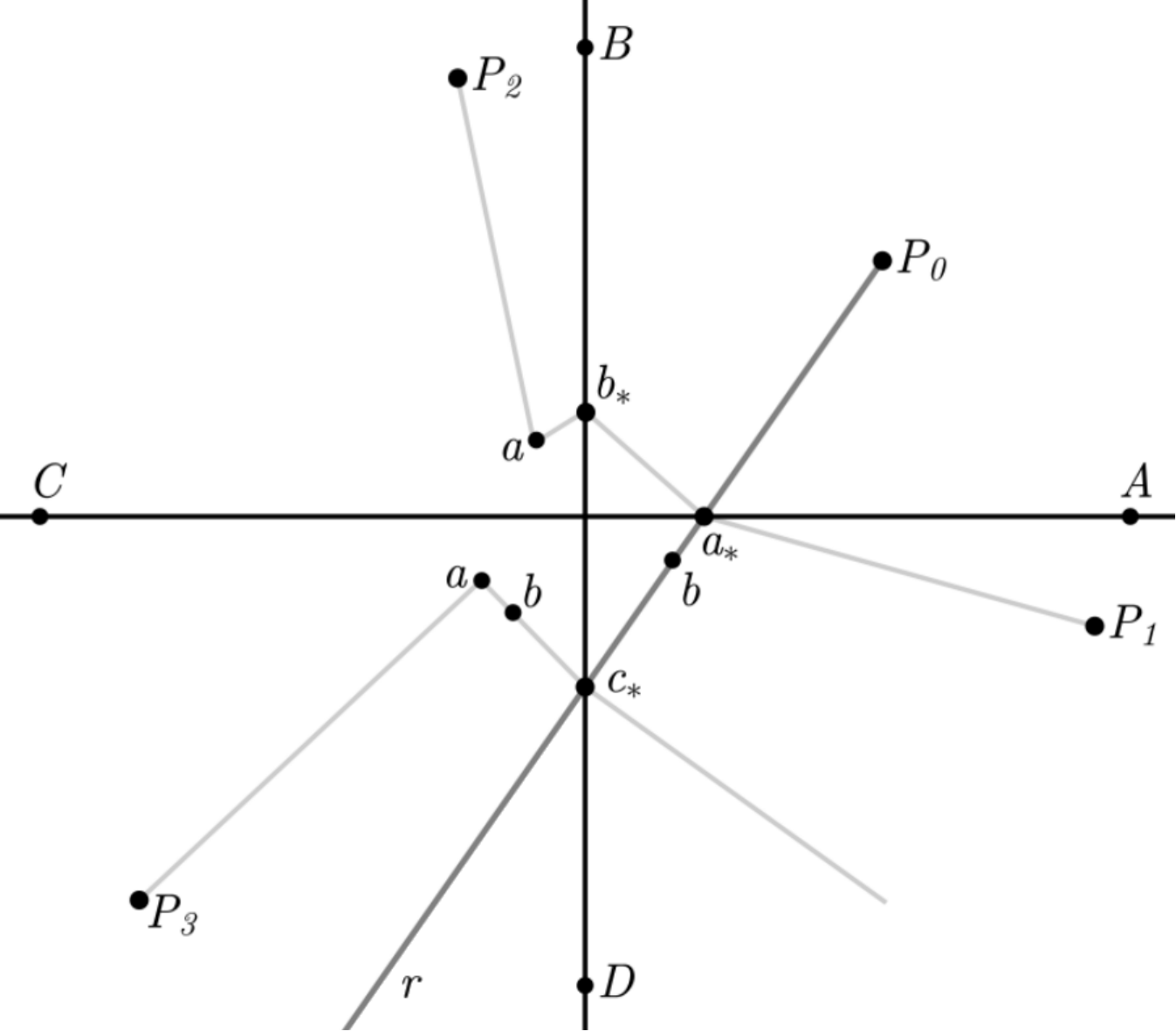} 
\end{minipage}
\begin{minipage}{0.45\textwidth}
	\centering
	\includegraphics[width=0.75\textwidth]{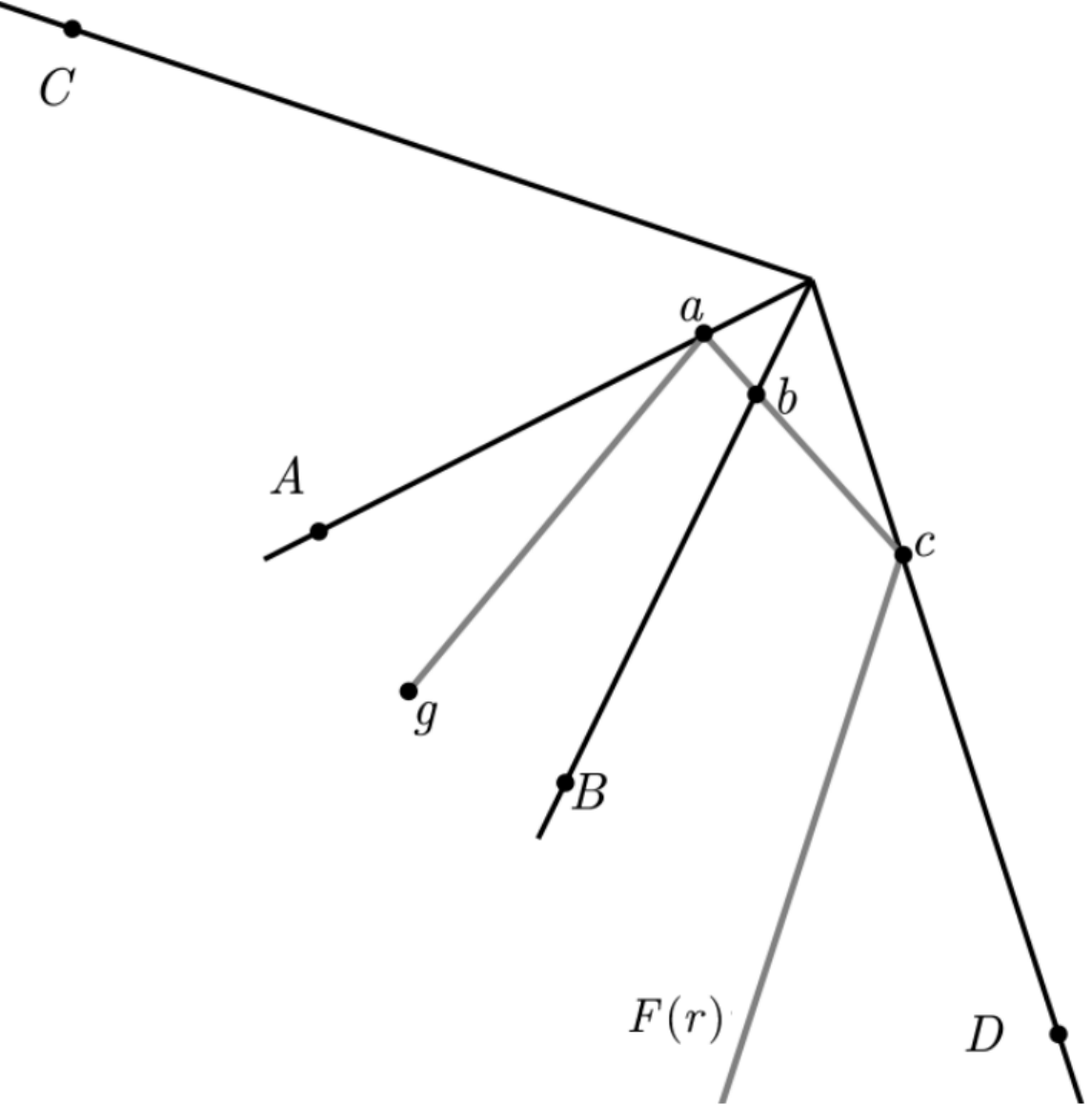} 
\end{minipage}
\caption{$\mathcal{B}$ (left) and $F^h(r)$ (right)  for $\ell_-=-1, \ell_+=4$}
\hfill

\end{figure}

%
%
We consider $\ell_-=-1$,  $\ell_+=4$.
To invert $g=-1000\sin(I_h)$, we start with the positive solution
$P_0 \approx ( 280.4396, 280.4396)$ (as in Section \ref{numericsSL}) and draw the half-line
$r =\{P_0+s(0.2\sin(2I_h)-0.8\sin(I_h)), \  s \geq 0\} \subset \mathbb{R}^{2}$ through $P_0$.
We add the term  $0.2\sin(2I_h)$ to minimize the possibility that $r$ intercepts $\cC$ at points which are not (topological) folds, i.e., points with more than one entry equal to zero (this is especially relevant for $n$ large). Figure 8 
shows $r$, $F^h(r)$ and the bifurcation diagram $\mathcal{B}$, the connected component of $F^{-1}(F(r))$ containing $P_0=(0,0)$.
The critical points of  $F^h$ in $r$ are $a$ and $c$. At these points, $F^h$ is a fold, and parts of $r$ get mirrored along the critical set. Its preimages are obtained by solving linear systems. They in turn may intercept $\cC$ (at $b$) and, by continuation, yield the missing three solutions $P_i$.

\subsection{Discretizing with $n=15$ points}

We consider two situations. In the first, the asymptotic values $\ell_-=0.4984$ e $\ell_+=19.1248$ enclose four eigenvalues of $A^h$ ($k=4$). 
A positive solution of $F^h(u) = g^h=-1000\sin(I_h)$ is $P_0=55.1633\sin(I_h)$, from Section \ref{numericsSL}. We use half-lines which are perturbations of the eigenvector $\sin(I_h)$ associated with the smallest eigenvalue,
\[ r=\{P_0 + s(\sin(I_h)-0.1\sin(2I_h) -0.1\sin(3I_h)- 0.1\sin(4I_h)), \ \text{com} \ s \geq 0\} \subset \mathbb{R}^{15} \ .\] 
to ensure simple, transversal intersections of $r$ with the critical set $\mathcal{C}$ of $F^h$. Figure  \ref{fig:4.7} represents  five open components $R_k, k=1, \ldots,5$ of the regular set, $\RR^{15} \setminus \mathcal{C}$. In each $R_k$, the sign of  $\det(A^h - D^O)$ is constant.
The bifurcation diagram $\mathcal{B}$ associated with $r$ from $P_0$  contains all the preimages in the data bank.

\begin{figure}  [ht]
\centering
\begin{minipage}{0.45\textwidth}
	\centering
	\includegraphics[width=165pt]{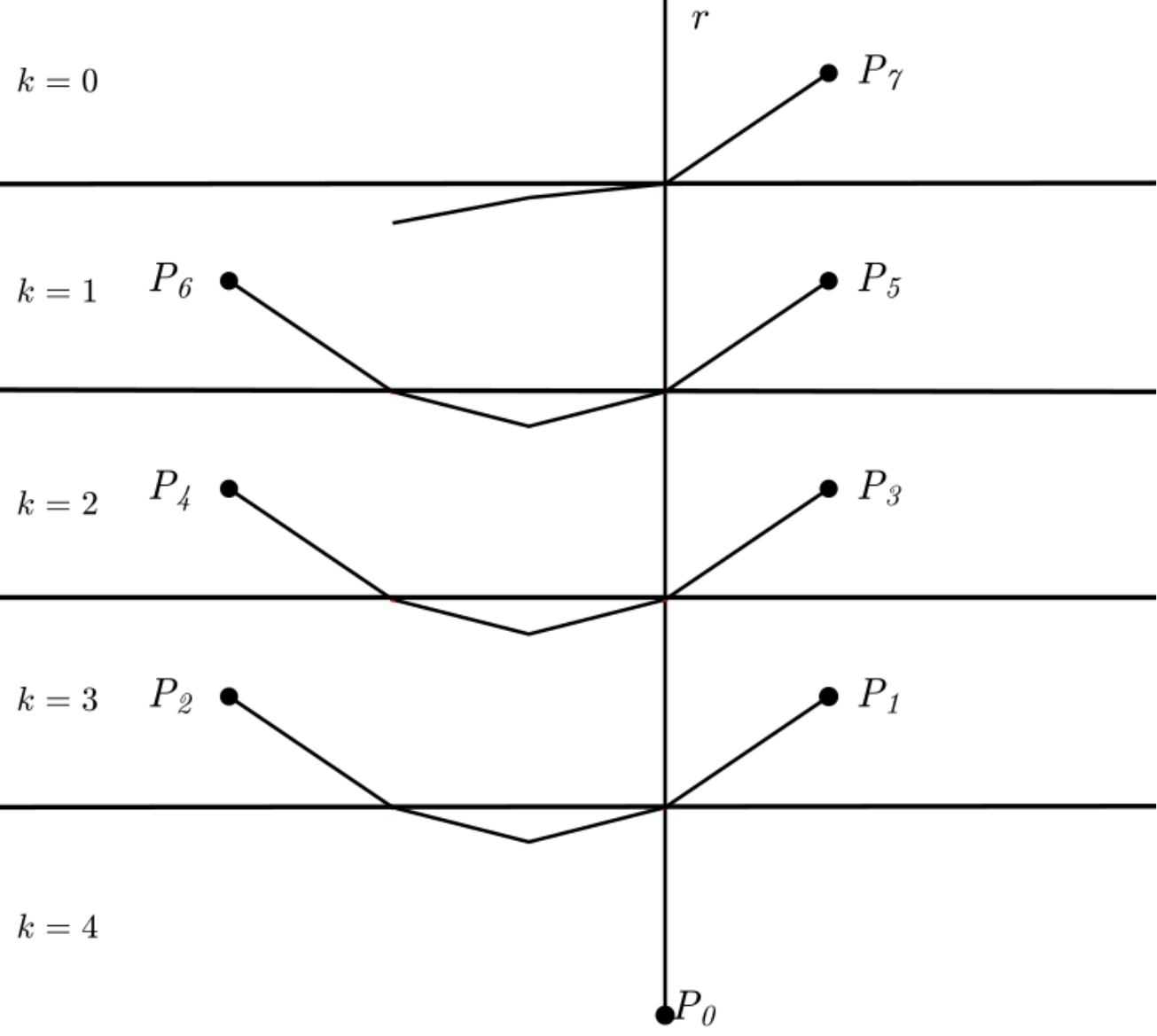}
	\caption{$\mathcal{B}$ for $k=4$.}
	\label{fig:4.7}
\end{minipage}
\hspace{0.5cm}
\begin{minipage}{0.45\textwidth}
	\centering
	\includegraphics[width=165pt]{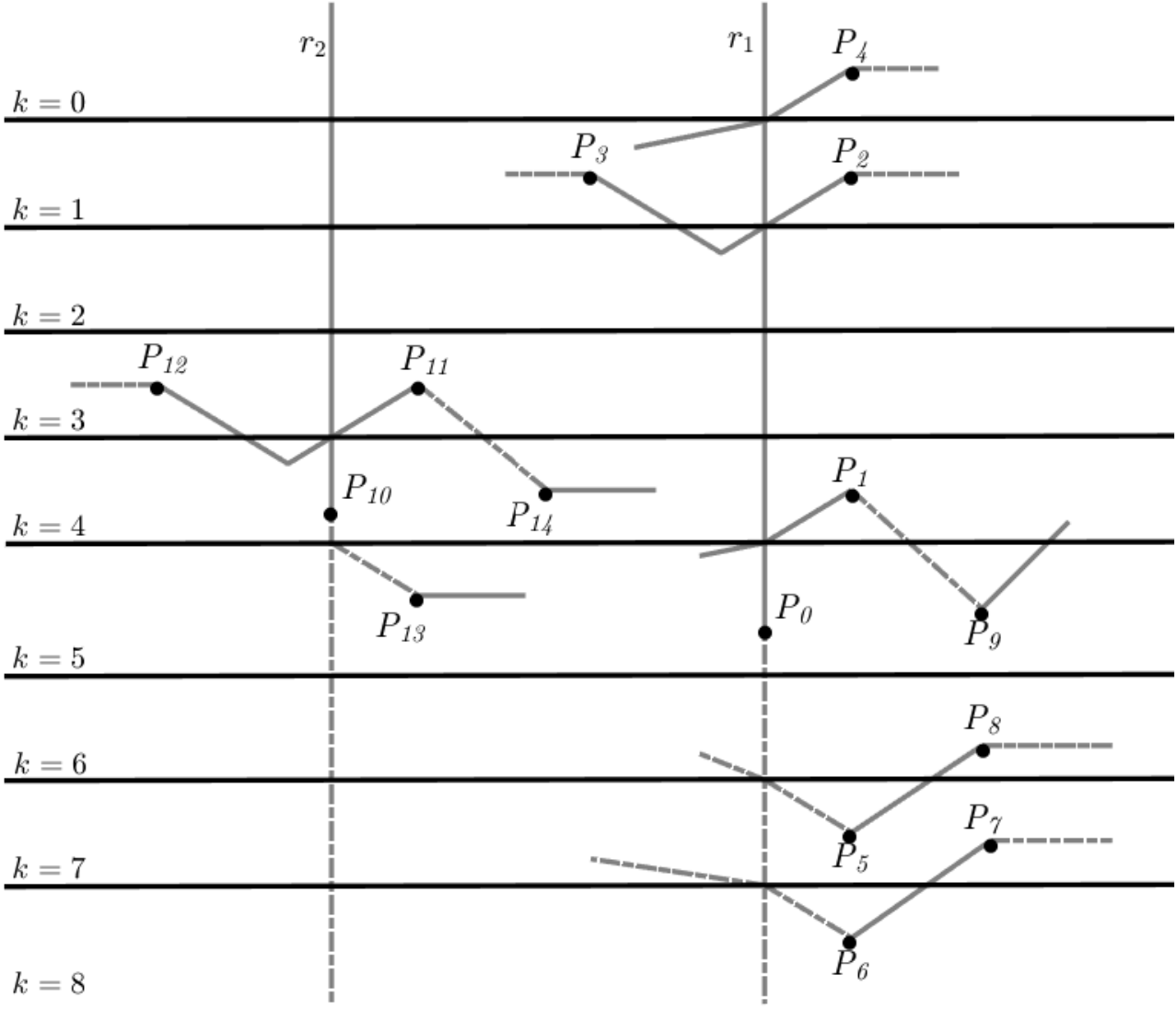}
	\caption{Two $\mathcal{B}$'s for $k=8$}
	\label{fig:twolines}
\end{minipage}
\hfill
\end{figure}

The geometry relates naturally with standard properties: $k$ is the number of negative eigenvalues of $A^h - D^O$, the Morse index in this context (the problem admits a variational formulation). The location of the solutions $P_i$ in terms of the Morse index is in accordance with the continuous case \cite{COSTA}. The parameter $k$ also relates to oscillation theory \cite{CODDINGTON} in the discretized context \cite{GANTMACHER}, an approach which does not extend to, say, the higher dimensional Dirichlet Laplacian. 

The fact that for small $k$ one expects $2k$ solutions should be compared with the example in Figure \ref{figbabado}. In that case, $k$ critical components give rise to  $k + 1$ solutions and the critical set contains only folds. Additional solutions indicate critical points which are not folds, in the spirit of Section \ref{winding}.

We now take  parameters $\ell_-=0.4984$ e $\ell_+=56.9367$, for which  $k=8$ and use the same $g$ (for the finer mesh), with 24 preimages from the table in Section \ref{table}.

We initialize the algorithm differently: choose orthants randomly, and search for a solution by solving the associated linear system. A first solution $P_0$ (notice that this is not the solution given in equation (\ref{LazerMcKenna})), with $k= 5$, was obtained after drawing 127 orthants. Define the generic line
\begin{align}
r_1=&\{ P_0+ s(0.8\sin(I_h)-0.1\sin(2I_h)-0.1\sin(3I_h)-0.1\sin(4I_h)\nonumber \\ &-0.1\sin(5I_h)-0.1\sin(6I_h)-0.1\sin(7I_h)+0.1\sin(8I_h))\}\nonumber
\end{align}
containing $P_0$.
The computation of  the bifurcation diagram $\mathcal{B}$ led to 17 new candidate preimages, of which only 9 had a small relative error, of the order of $10^{-15}$. The spurious solutions are related to inversion
of unstable matrices of the form $(A^h-D^o)$ in orthants trespassed along the homotopy process. Stretches of the bifurcation diagram are either continuous or dotted, depending if they were obtained from $s=0$ by taking $s$ positive or negative. Points $P_i$ at which continuous and dotted stretches intercept are indeed regular points of $F$.

A new solution $P_{10}$ is obtained by sampling additional orthants, as in the computation of $P_0$. The bifurcation diagram associated with $r_2=\{P_{10}+s(\sin(I_h)-\sin(8I_h))\}$ yielded four more preimages, after filtering candidates by relative error. Figure \ref{fig:twolines} shows  the (truncated) bifurcation diagrams associated with  $r_1$ and $r_2$.


Additional solutions were obtained by sampling, leading to $24$ solutions. The table below counts solutions $u$ by $k$, the number of negative eigenvalues of $DF(u)$.

\begin{table} [h]
\centering
\begin{tabular}{|c|c|c|c|c|c|c|c|c|c|}
	\hline
	$k$ & 0 & 1 & 2 & 3 & 4 & 5 & 6 & 7 & 8 \\ \hline
	$\hbox{Number of preimages}$ & 1 & 2 & 2 & 4 & 6 & 4 & 2 & 2 & 1 \\ \hline
\end{tabular}
\end{table}

%


\section{Semi-linear perturbations of the Laplacian} \label{Solimini}

We present a version of the Ambrosetti-Prodi theorem, combining material in \cite{AP,MM, BERGER, CALNETO, SMILEY}).  On a bounded set $\Omega \subset \mathbb{R}^n$ with smooth boundary, let  
\[ -\Delta_D: X = H^2(\Omega) \cap H^1_D(\Omega) \to Y=L^2(\Omega)\]
 be the Dirichlet Laplacian, and denote its smallest eigenvalues by $\lambda_1 < \lambda_2$. A {\it vertical line} is a line in $X$ or $Y $ with direction given by $\phi_1>0$, a positive eigenvector associated with $\lambda_1$. {\it Horizontal subspaces} $H_X \subset X $ and $H_Y\subset Y$  consist of vectors perpendicular to $\phi_1$. {\it Horizontal  hyperplanes}  are parallel to the horizontal subspaces.

\medskip
\begin{theo} \label{APT} 	Consider the function
\begin{equation} \label{AP}
	F: X \to Y \ , \quad u \mapsto  - \Delta_D u - f(u) 
\end{equation}
where $f: \RR \to \RR$ is a  strictly convex smooth function satisfying
\begin{equation} \label{ASY} - \infty < \lim_{x \to - \infty} f'(x) < \lambda_1 < \lim_{x \to  \infty} f'(x) < \lambda_2 \ .
\end{equation}
The critical set $\cC$ of $F$ contains only folds.
The orthogonal projection $\cC \to H_X$ is a diffeomorphism, as is the projection of the image of each horizontal hyperplane $F(H_V + x) \to H_Y, x \in X$. 
The inverse under $F$ of vertical lines in $Y$ are curves in $X$  intercepting each horizontal hyperplane and $\cC$ exactly once, transversally. In particular, $F$ is a global fold and the equation $F(u) = g \in Y$ has 0, 1 or 2 solutions.
\end{theo}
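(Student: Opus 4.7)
The plan is to treat $F$ through the splitting $X = H_X \oplus \langle\phi_1\rangle$ and $Y = H_Y \oplus \langle\phi_1\rangle$, exploiting that $DF(u) = -\Delta_D - f'(u)$ is self-adjoint with spectrum controlled by the asymptotic slopes $\ell_\pm$. The skeleton has three parts: (i) locate $\cC$ using the principal eigenvalue, (ii) show horizontal hyperplanes map diffeomorphically to $H_Y$, and (iii) reduce preimages of vertical lines to a one-dimensional fold.

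For (i), since $f'(u) \le \ell_+ < \lambda_2$ pointwise, the second eigenvalue of $DF(u)$ is at least $\lambda_2 - \ell_+ > 0$, so $u \in \cC$ iff its principal eigenvalue $\mu(u)$ vanishes. First-order perturbation gives $D\mu(u)\cdot h = -\int f''(u)\,h\,\phi_s(u)^2$, with $\phi_s(u) > 0$ the normalized principal eigenvector (Krein--Rutman). Taking $h = \phi_1 > 0$ together with $f'' > 0$ shows $\mu$ is strictly decreasing along every vertical line; combined with the limits $\mu(v_0 + t\phi_1) \to \lambda_1 - \ell_\mp$ as $t \to \mp\infty$ (opposite signs by hypothesis), this yields exactly one critical point per vertical line and identifies $\cC$ as a graph over $H_X$. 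Taking $h = \phi_s$ yields $D\mu(u)\cdot\phi_s = -\int f''(u)\,\phi_s^3 < 0$, and Theorem \ref{trespassing} then marks every critical point a fold.

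For (ii), I would study $\Psi_c: H_X \to H_Y$, $\Psi_c(v) = P_{H_Y} F(v + c\phi_1)$. Its linearization $h \mapsto -\Delta h - P_{H_Y}(f'(v + c\phi_1)h)$ satisfies the quadratic-form bound $\langle D\Psi_c(v) h, h\rangle \ge (\lambda_2 - \ell_+)\|h\|_{L^2}^2$ for $h \in H_X$, using $\|\nabla h\|_{L^2}^2 \ge \lambda_2 \|h\|_{L^2}^2$ there and $f' \le \ell_+$ uniformly. Thus $D\Psi_c$ is a uniform isomorphism. By Corollary \ref{elipticos}, $F$ is b-proper, and the same coercivity makes $\Psi_c$ proper. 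A proper local diffeomorphism onto the simply connected $H_Y$ is a diffeomorphism (Hadamard), which completes (ii).

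For (iii), fix a vertical line $L = g_0 + \RR\phi_1$: by (ii), for each $t \in \RR$ there is a unique $V(t) \in H_X$ with $\Psi_t(V(t)) = P_{H_Y} g_0$, so $F^{-1}(L)$ is the smooth curve $u(t) = V(t) + t\phi_1$, meeting every horizontal hyperplane once transversally. Writing $F(u(t)) = P_{H_Y} g_0 + E(t)\phi_1$, differentiating in $t$ and pairing with $\phi_s(u(t))$ yields $\mu(u(t))\langle u'(t), \phi_s\rangle = E'(t)\langle \phi_1, \phi_s\rangle$. Since $\langle u'(t), \phi_1\rangle = \|\phi_1\|^2 > 0$, $u'(t) \ne 0$ and this forces $E'(t) = 0 \iff u(t) \in \cC$. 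At any such $t_\ast$, $u'(t_\ast) \in \ker DF(u(t_\ast)) = \langle\phi_s\rangle$ is a strictly positive multiple $c\phi_s$ (by the sign of $\langle u', \phi_1\rangle$), so the chain rule gives $\mu'(t_\ast) = -c\int f''(u(t_\ast))\phi_s^3 < 0$. Every zero of $t \mapsto \mu(u(t))$ is therefore a strictly descending zero, which forces uniqueness (two descending zeros would sandwich an ascending one). Existence follows from $\mu \to \lambda_1 - \ell_\mp$ as $t \to \mp\infty$, and the same limits give $E(t) \sim (\lambda_1 - \ell_\pm)\,t \to -\infty$ at both ends; hence $F|_{u(t)}$ is a one-dimensional fold and $\#F^{-1}(g) \in \{0,1,2\}$. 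The main obstacle I anticipate is precisely this uniqueness in (iii): collapsing a potentially global count to the local sign $\mu'(t_\ast) < 0$ at every zero. The properness promotion in (ii) also calls for a short coercivity estimate, but is routine.
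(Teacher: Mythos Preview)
The paper does not prove Theorem~\ref{APT}; it is stated as a compilation from \cite{AP,MM,BERGER,CALNETO,SMILEY}. Your plan is precisely the Berger--Podolak Lyapunov--Schmidt reduction of \cite{BERGER}, so there is no route to compare against beyond noting that you have reconstructed the classical one. The architecture of (i)--(iii) is correct; two comments on the execution.

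Your flagged ``main obstacle'' is not one. If a $C^1$ function $t\mapsto \mu(u(t))$ has strictly negative derivative at every zero, its zeros are isolated, and between two consecutive zeros $t_1<t_2$ it would be negative just past $t_1$ and positive just before $t_2$, forcing an intermediate zero and a contradiction. That step is airtight.

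The genuine gap is elsewhere: the asymptotics you quote at the end of~(iii). In~(i) you proved $\mu(v_0+t\phi_1)\to\lambda_1-\ell_\mp$ for \emph{fixed} $v_0$, but in~(iii) you apply the same limits to $u(t)=V(t)+t\phi_1$ with $V(t)$ moving. Nothing yet controls $\|V(t)\|$, so neither $f'(u(t))\to\ell_\pm$ pointwise nor $E(t)\sim(\lambda_1-\ell_\pm)t$ follows automatically. The fix comes from the very coercivity you used in~(ii): integrating $\langle D\Psi_t(\,\cdot\,)h,h\rangle\ge(\lambda_2-\ell_+)\|h\|_{L^2}^2$ along the segment from $0$ to $V(t)$ gives
\[
(\lambda_2-\ell_+)\|V(t)\|_{L^2}\ \le\ \|\Psi_t(V(t))-\Psi_t(0)\|_{L^2}\ =\ \|P_{H_Y}g_0+P_{H_Y}f(t\phi_1)\|_{L^2},
\]
and since $P_{H_Y}(\ell_\pm t\phi_1)=0$ and $|f(s)-\ell_\pm s|=o(|s|)$, the right side is $o(|t|)$. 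Then $u(t)/t\to\phi_1$ and your limits for $\mu$ and $E$ follow by dominated convergence.

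A minor point: the line ``$u'(t)\ne 0$ forces $E'(t)=0\iff u(t)\in\cC$'' is not the right justification. At regular $u(t)$ invert $DF(u(t))u'(t)=E'(t)\phi_1$ and pair with $\phi_1$, using $\langle u'(t),\phi_1\rangle=\|\phi_1\|^2$, to get $E'(t)\ne 0$; at critical $u(t)$ the range of $DF(u(t))$ is $\phi_s^\perp$, and $\langle\phi_1,\phi_s\rangle>0$ then forces $E'(t)=0$.
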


\medskip
In the spirit of Section \ref{basicfolds}, $F: X \to Y$ is a {\it global fold} if
there are diffeomorphisms $\Phi: \RR \times Z \to X $ and  $\Psi: Y \to \RR \times Z$ such that
\[ \tilde F = \Psi \circ F \circ \Phi(t, z): \RR \times Z \to \RR \times Z  \ , \quad \tilde F (t, z) =  (t^2, z)\] for some real Banach space $Z$. The statement implies that the flower of $F$ equals the critical set $\cC$: compared to the examples in the previous sections, the global geometry of $F$ is very simple. Both functions $F$ and $\tilde F$ trivially satisfy the geometric model in the Introduction: domain and counterdomain split in two tiles, both topological half-spaces, and both tiles in the domain are sent to the same tile in the counterdomain. Said differently, the flower equals the critical set, which is topologically a hyperplane.

The underlying geometry led to numerical approaches to solving $F(u) = g$.
Smiley (\cite{SMILEY}) suggested an algorithm based upon one-dimensional searches, later implemented in \cite{CALNETO}. Finite dimensional reduction applies for (generic) asymptotically linear functions $f$  for which the image of $f'$ contains a finite number of eigenvalues of $-\Delta_D$ (\cite{CALNETO,KAMINSKI}).
Under hypothesis (\ref{ASY}), the {\it nonconvexity} of $f$ implies that some right hand side $g$ has four preimages (\cite{CaTZ}). Up to technicalities, the algorithm yields all solutions for convex and nonconvex nonlinearities.

\medskip
Following a different geometric inspiration, Breuer, McKenna and Plum (\cite{PLUM}) computed four solutions  of
\begin{equation} \label{Plum} - \Delta u + u^2 \ = \  800 \sin( \pi x) \sin(\pi y), \ (x,y ) \in \Omega = (0,1) \times (0,1) \ , \ u|_\Omega = 0 \ . 
\end{equation}
The hardest one, call it $u$, was obtained 
by interpreting it as a saddle point  of a functional associated with the variational formulation of the equation. The authors present a computer assisted proof that $u$ is reachable by Newton's method from a computed initial condition $\tilde u$. We do not operate on this level of detail. Such four solutions were also obtained in \cite{ALLGOWER2}. 

Equation (\ref{Plum}) may be treated with our methods, but we introduce a situation with additional difficulties

In preparation, we count intersections of  vertical lines and $\cC$. For a bounded, smooth domain $\Omega \subset \RR^n$, the spectrum of  $- \Delta_D: X \to Y$ is
\[ 0 < \lambda_1 \le \lambda_2 \le \ldots \le \lambda_k <  \lambda_{k+1} \le \ldots \to \infty \ , \]
with associated (normalized) eigenvectors $\phi_k$, where $\phi_1 >0$ in $\Omega$.
Let $f: \RR \to \RR$ be a smooth function for which $\lim_{x \to \pm \infty} f'(x) = \ell_{\pm}$, where $\lambda_- < \lambda_1$ and $\lambda_k \in (\lambda_k, \lambda_{k+1})$. To simplify some arguments, we also consider 
\[ \tilde F: \tilde X \to \tilde Y \ , \quad \tilde F(u) = -\Delta u - f(u)\]
where $\tilde X = C^{2,\alpha}_D(\Omega)$, the  H\"older space of functions equal to zero on $\partial \Omega$, and $\tilde Y =  C^{0,\alpha}(\Omega)$, $\alpha \in (0,1)$. For $w \in \tilde X$, consider the {\it vertical line}  $\{w + t \phi_1,  t \in \RR\}$.

\medskip
We use a natural extension of the familiar Rayleigh-Ritz technique to semibounded operators, Lemma XIII.3 of \cite{RS}, which we transcribe without proof.

\medskip
\begin{prop} \label{RayleighRitz}
Let $H$ be a complex Hilbert space, $T: D \subset H \to H$ a  self-adjoint operator bounded from below with bottom eigenvalues
$\mu_1 \le \mu_2 \le \ldots \le \mu_k$ counted with multiplicity. Let $V$ be an $n$-dimensional subspace $V \subset D$, $n \ge k$. Let $P$ be the orthogonal projection $P: H \to V$ and suppose that the restriction $P T P^\ast: V \to V$ has   eigenvalues $\nu_1\le \ldots \nu_n$. Then $\mu_i \le \nu_i, i=1, \ldots, k$.
\end{prop}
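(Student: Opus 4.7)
The plan is to reduce the statement to the classical Courant--Fischer min-max characterization of the bottom eigenvalues of a self-adjoint operator bounded below. Since $T$ is self-adjoint and semibounded, and $\mu_1 \le \cdots \le \mu_k$ are the bottom $k$ eigenvalues counted with multiplicity, one has
\[ \mu_i \ = \ \min_{\substack{S \subset D \\ \dim S = i}} \; \max_{\substack{\phi \in S \\ \|\phi\|=1}} \, \langle T\phi, \phi \rangle_H \ , \quad i = 1, \ldots, k \ , \]
which is the content of the standard min-max theorem (Theorem XIII.1 of \cite{RS}). The same principle applied to the finite-dimensional self-adjoint operator $PTP^\ast$ on $V$ yields the analogous formula for $\nu_i$ with the minimum taken over $i$-dimensional subspaces $S \subset V$.

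The central computation is to show that the two quadratic forms agree on $V$. For $\phi \in V$, the orthogonal projection $P: H \to V$ satisfies $P^\ast : V \hookrightarrow H$ (the inclusion), since for $v \in V$ and $h \in H$ one has $\langle Ph, v\rangle_V = \langle Ph, v\rangle_H = \langle h, v\rangle_H$. Then $P^\ast \phi = \phi$ regarded in $H$, and
\[ \langle P T P^\ast \phi, \phi \rangle_V \ = \ \langle T P^\ast \phi, P^\ast \phi\rangle_H \ = \ \langle T\phi, \phi\rangle_H \ , \]
so the Rayleigh quotient of $PTP^\ast$ on $V$ coincides with the Rayleigh quotient of $T$ restricted to $V$.

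With this identification, the min-max expression for $\nu_i$ becomes a minimum of the same functional $\phi \mapsto \langle T\phi, \phi\rangle_H$ that defines $\mu_i$, but now over the strictly smaller collection of $i$-dimensional subspaces contained in $V \subset D$. Minimizing a quantity over a smaller family can only produce a larger (or equal) value, so $\mu_i \le \nu_i$ for each $i = 1, \ldots, k$.

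The only real subtlety, beyond the bookkeeping distinguishing $P$ from $P^\ast$, is making sure that the min-max formula returns the genuine eigenvalues $\mu_i$ rather than the bottom of the essential spectrum; this is exactly what the hypothesis that $\mu_1, \ldots, \mu_k$ are the bottom $k$ eigenvalues of the semibounded self-adjoint $T$ guarantees, so no further assumption on $\sigma_{\text{ess}}(T)$ is needed.
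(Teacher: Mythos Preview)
Your argument is correct. Note that the paper does not prove this proposition at all: it is introduced as a transcription of Lemma~XIII.3 of \cite{RS}, explicitly ``without proof,'' so there is no approach in the paper to compare against. Your reduction to the Courant--Fischer min-max characterization, combined with the identification $\langle PTP^\ast\phi,\phi\rangle_V=\langle T\phi,\phi\rangle_H$ for $\phi\in V$, is precisely the standard proof one finds in Reed--Simon; the final monotonicity step (restricting the family of competing subspaces to those contained in $V$) is exactly how Lemma~XIII.3 is derived there from Theorem~XIII.1.
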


\medskip
\medskip
\begin{prop} \label{Minmax} Vertical lines intercept the critical set $\cC$ of $\tilde F$ at at least $k$ points (counted with multiplicity).
\end{prop}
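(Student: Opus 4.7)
The plan is to follow the spectrum of the Jacobian along the vertical line. Parameterize the line as $u(t) = w + t\phi_1$ and let $L(t) = D\tilde F(u(t)) = -\Delta_D - f'(u(t))$. Viewed as a self-adjoint operator on $L^2(\Omega)$ with form domain $H^1_0(\Omega)$, $L(t)$ differs from $-\Delta_D$ by a bounded multiplication operator, so it has compact resolvent, purely discrete real spectrum $\mu_1(t) \le \mu_2(t) \le \cdots$, and each $\mu_j$ is continuous in $t$. Elliptic regularity identifies the $L^2$-kernel of $L(t)$ with the kernel of $L(t):\tilde X \to \tilde Y$, so $u(t) \in \cC$ precisely when some $\mu_j(t) = 0$. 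The strategy is an intermediate value argument applied to each of the first $k$ eigenvalue functions.

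For $t \to +\infty$, I would invoke Proposition \ref{RayleighRitz} with the fixed test space $V = \spam\{\phi_1, \ldots, \phi_k\} \subset H^1_0$. Since $\phi_1 > 0$ in the interior of $\Omega$, $u(t) \to +\infty$ almost everywhere, so $f'(u(t)) \to \ell_+$ a.e., and dominated convergence gives $\int f'(u(t))\phi_i \phi_j \to \ell_+ \delta_{ij}$. The matrix representing $P L(t) P^\ast$ on $V$ therefore converges to $\diag(\lambda_1 - \ell_+, \ldots, \lambda_k - \ell_+)$, whose largest eigenvalue is $\lambda_k - \ell_+ < 0$; Proposition \ref{RayleighRitz} then gives $\mu_k(t) < 0$ for $t$ large. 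For $t \to -\infty$, I would show $\mu_1(t) > 0$. Write $f'(u(t)) = \ell_- + r_t$; since $r_t \to 0$ a.e.\ on $\Omega$ and is uniformly bounded, dominated convergence yields $\|r_t\|_{L^p(\Omega)} \to 0$ for every finite $p$. H\"older and Sobolev embedding give $|\int r_t \phi^2| \le C \|r_t\|_{L^p}\|\phi\|_{H^1}^2$ for suitable $p$. Any near-minimizer $\phi_n$ with $\|\phi_n\|_{L^2} = 1$ of the quadratic form at $t = t_n$ satisfies $\|\nabla\phi_n\|_{L^2}^2 \le \mu_1(t_n) + \sup|f'| + o(1)$, so under the hypothesis $\mu_1(t_n) \le 0$ along a sequence $t_n \to -\infty$ the $H^1$-norms of $\phi_n$ remain bounded, the remainder term becomes $o(1)$, and Poincar\'e's inequality forces $\mu_1(t_n) \ge \lambda_1 - \ell_- + o(1) > 0$, a contradiction.

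With signs established at both ends, the IVT applied to each continuous $\mu_j(t)$, $j = 1, \ldots, k$, produces a zero $t_j$. Coincidences $t_i = t_j$ for $i \ne j$ correspond to the eigenvalue $0$ of $L(t_i)$ having multiplicity at least two, so the total count with multiplicity is at least $k$. The hardest step will be the estimate at $t \to -\infty$: the naive bound $L(t) \ge -\Delta - \sup|f'|$ fails because $u(t) \equiv w$ on $\partial\Omega$, so $f'(u(t))$ does not tend to $\ell_-$ uniformly on $\Omega$; the point is that this obstruction lives on a null set, and Sobolev regularity of test functions tames its contribution through an $L^p$-bootstrap rather than an $L^\infty$-estimate.
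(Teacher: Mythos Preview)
Your proposal is correct and follows the same strategy as the paper: track the eigenvalues $\mu_j(t)$ of the Jacobian along the vertical line, use Proposition~\ref{RayleighRitz} with the test space $V = \spam\{\phi_1,\ldots,\phi_k\}$ and dominated convergence to force $\mu_k(t) < 0$ for $t \gg 0$, show $\mu_1(t) > 0$ for $t \ll 0$, and conclude by the intermediate value theorem. Your handling of the $t \to -\infty$ end is in fact more careful than the paper's one-line appeal to dominated convergence inside the variational minimum---you correctly isolate the non-uniformity near $\partial\Omega$ and control it via $L^p$ convergence of the potential together with an $H^1$ bound on near-minimizers---but the architecture of the two arguments is identical.
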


\begin{proof}
For the smooth function $\tilde F$, the Jacobian $D\tilde F(u): \tilde X \to \tilde Y$ is given by $D\tilde F(u) v = - \Delta v - f'(u) v$. From standard arguments in linear elliptic theory, since $q = f' \circ u$ is a bounded (continuous) function, the operator 
\[ T(u): X \to Y , \quad v \mapsto - \Delta_D v - f'(u) v\ \]
is self-adjoint with spectrum consisting of eigenvalues
\[ \mu_1(u) < \mu_2(u) \le \mu_3(u) \le \ldots  \to \infty \ . \]
Moreover, $\sigma(D\tilde F(u)) = \sigma(T(u))$.
We first show that for  $t <<0$, $T(t)$ is a positive operator (i.e., all its eigenvalues are strictly positive). Indeed, by dominated convergence, since $w + t \phi_1 \to \ell_-$ pointwise as $t \to -\infty$,
\[ \mu_1(u(t)) = \min_{\| v \| = 1} \langle T(u(t)) v, v \rangle = \min_{\| v \| = 1} \langle -\Delta_D v , v \rangle  - \langle f'(w + t \phi_1) v, v \rangle
\ge \lambda_1 - \ell_- > 0 \ .\]
We now obtain estimates for $\sigma(T(u(t))$ for $t >> 0$. Let $V = \spam\{\phi_i, i=1, \ldots,k\}$, the vector space generated by the first $k$ eigenfunctions of $-\Delta_D$. Then the matrix $M(t)$ associated with $P H P^T$ in this (orthonormal) basis for $V$ has entries
$M_{ij} = M_{ji} = \delta_{ij} \lambda_i  - \langle v_i, f'(w + t \phi) v_j \rangle$. Again, as $t \to \infty$, $\langle v_i, f'(w + t \phi) v_j \to \delta_{ij} \ell_+$, so that $M(t)$ converge to the diagonal matrix with diagonal entries $\lambda_i - \ell_+ < 0$. From Proposition \ref{RayleighRitz}, the first $k$ eigenvalues of $T(u(t))$  are strictly negative for large $t$.
\end{proof}

\begin{figure} 
	\begin{centering}
		\includegraphics[height=150pt,width=150pt]{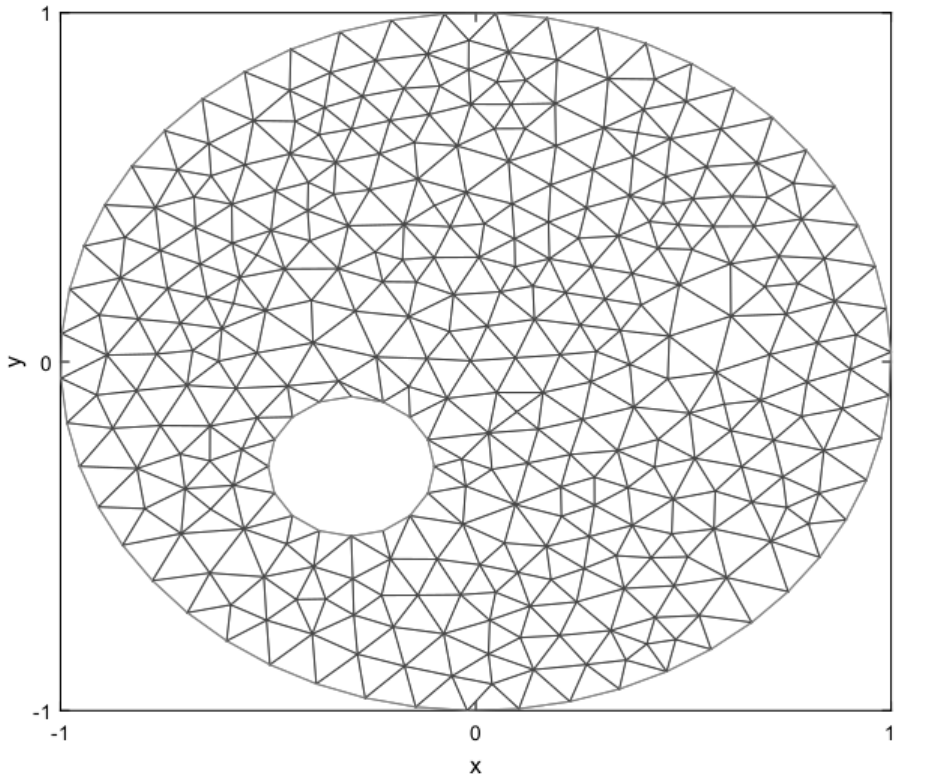}
		\caption{The domain $\Omega$ and its mesh}
		\label{fig:5.1}
	\end{centering}
\end{figure}

\bigskip
In \cite{LMCKENNA}, Lazer and McKenna conjectured that, for an asymptotically linear $f$ with parameters $\ell_-<\lambda_1$ and $\lambda_k<\ell_+<\lambda_{k+1}, k \in \mathbb{N}$, there should be at least  $2k$ solutions of (\ref{eq:PVCnaolinear}) for  $g = - t \phi_1, t >> 0$. A counterexample was provided by Dancer
(\cite{LAZERANDMCKENNA}). The example we handle follows a positive result of Solimini \cite{SOLIMINI}. 

Let  $\Omega$ be the annulus \[ \Omega=\{x \in  \mathbb{R}^2 | \ |x|<1 \ ,  \ |x-(-0.3,-0.3)|>0.2 \} \ . \] 

We discretize  $- \Delta_D$
by piecewise linear finite elements on a mesh on $\Omega$ with 274 triangles.
The four smallest eigenvalues of the discretized operator $-\Delta_D^h$ are simple,
\[ \lambda_1^h \approx  9.0988, \quad \lambda_2^h \approx 16.3218, \quad \lambda_3^h \approx 22.9346, \quad \lambda_4^h \approx 30.4949 \ . \]

We consider
\begin{equation} 	\label{eq:PVCnaolinear}
	F(u)=-\Delta_D u-f(u)=g, \ \ \  \ u|_{\partial \Omega}=0 .
\end{equation}

According to Solimini, for some $\epsilon>0$ and parameters $\ell_-$ and $ \ell_+$ satisfying
$ \ell_-<\lambda_1 < \lambda_3< \ell_+< \lambda_3+\epsilon$
the equation (\ref{eq:PVCnaolinear}) has {\it exactly} $6$ solutions for $g = - t \phi_1$ for  large, positive $t$.
For concreteness, we take $f$ such that $f'(x)=\alpha \ \text{arctan}(x)+\beta$, where $\alpha$ and $\beta$ are adjusted so that $\ell_- = -1$, $\ell_+ = 25.3397$. Finally, set $g^h=-1000 \ \phi_1^h$.

\bigskip
We first obtain a solution $P_0$ by a continuation method.
Set
$ r=\{P_0+s(0.8\phi_1^h - 0.1\phi_2^h - 0.1\phi_3^h)$, a stretch of which ($s \in [-1000, 1000]$) we traverse with an increment $h_s=0.1$. As in the previous section, small terms are inserted so as to increase the possibilities that intersections with the critical set are transversal.

Along $r$, the four smallest eigenvalues $\mu_i^h$ of the Jacobian $DF(u)$ are given in Figure \ref{fig:5.2} (for the underlying numerics, we used \cite{BOFFI}). A point $u \in r$ is a critical point of $F$ if and only if some such eigenvalue is zero.

\begin{figure} 
\centering
\begin{minipage}{0.45\textwidth}
	\centering
	\includegraphics[width=165pt]{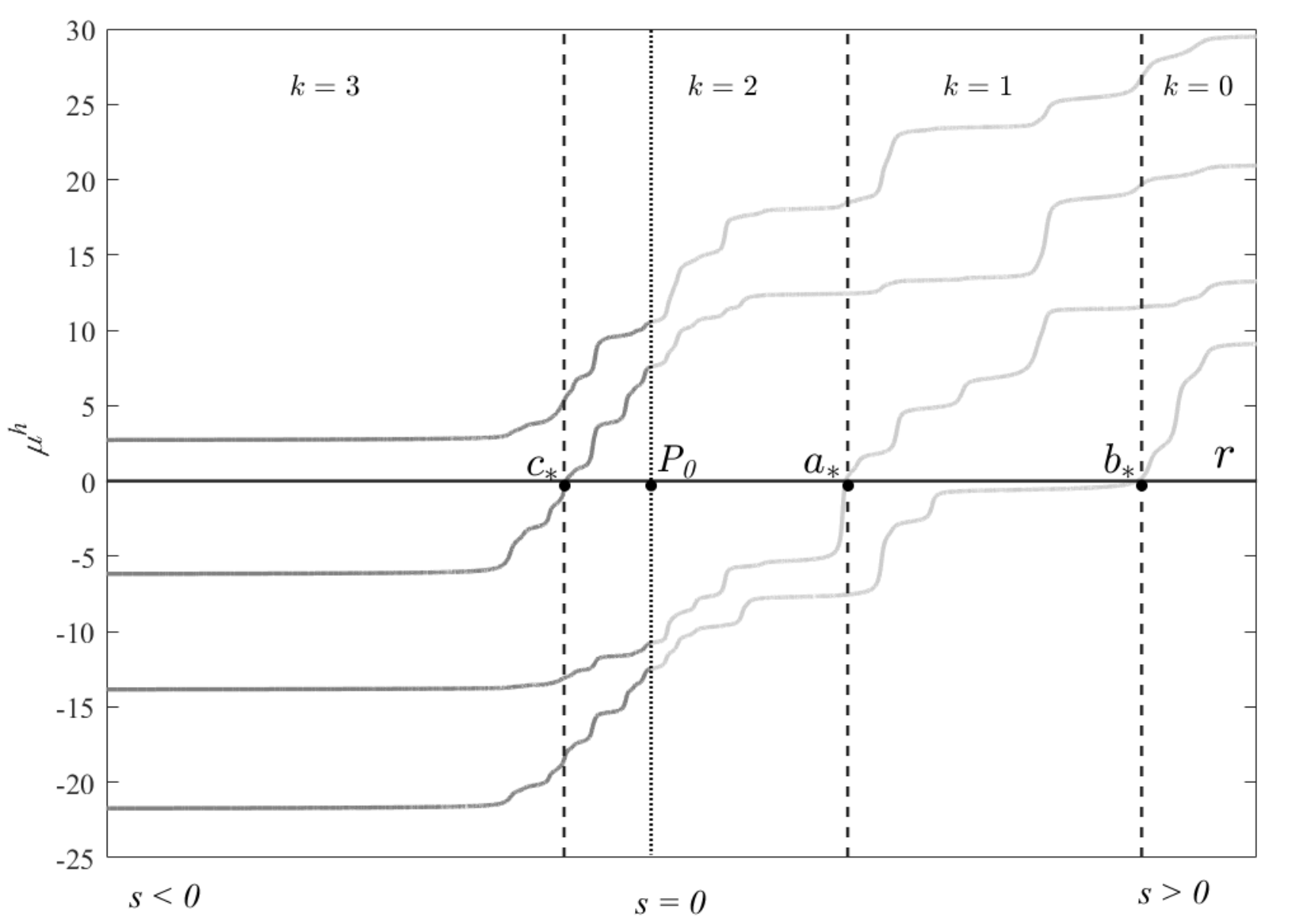}
	\caption{The four smallest eigenvalues of $DF(u)$ for $u(s) \in r$.}
	\label{fig:5.2}
\end{minipage}
\hspace{0.4cm}
\begin{minipage}{0.45\textwidth}
	\centering
	\includegraphics[width=150pt]{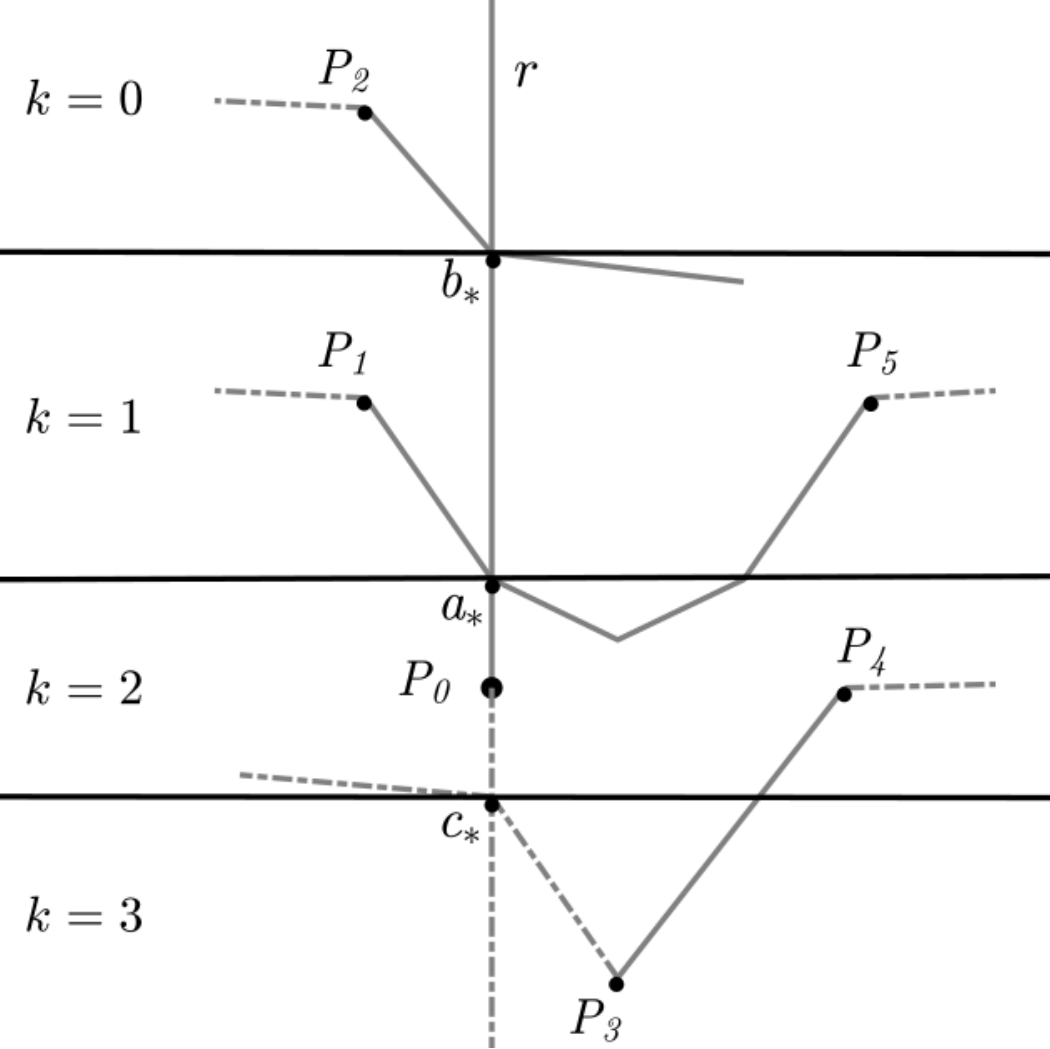}
	\caption{The six solutions in $\mathcal{B}$}
	\label{fig:morse}
\end{minipage}
\end{figure}

In Figure \ref{fig:morse}, horizontal lines represent parts of the critical set $\mathcal{C}$. The value $k$  counts the number of negative eigenvalues of $DF(u)$ at each of the regular components. The point $P_0 \in r$ belongs to a component for which $k=2$. The bifurcation diagram $\mathcal{B}$, containing the six solutions, is described in Figure \ref{fig:morse} and the solutions are given in Figure \ref{fig:5.5}. Continuation to $P_5$ required  finer
jumps along $r$.

\begin{figure} 
$$\begin{array}{cc}	
	\includegraphics[keepaspectratio,scale=0.13]{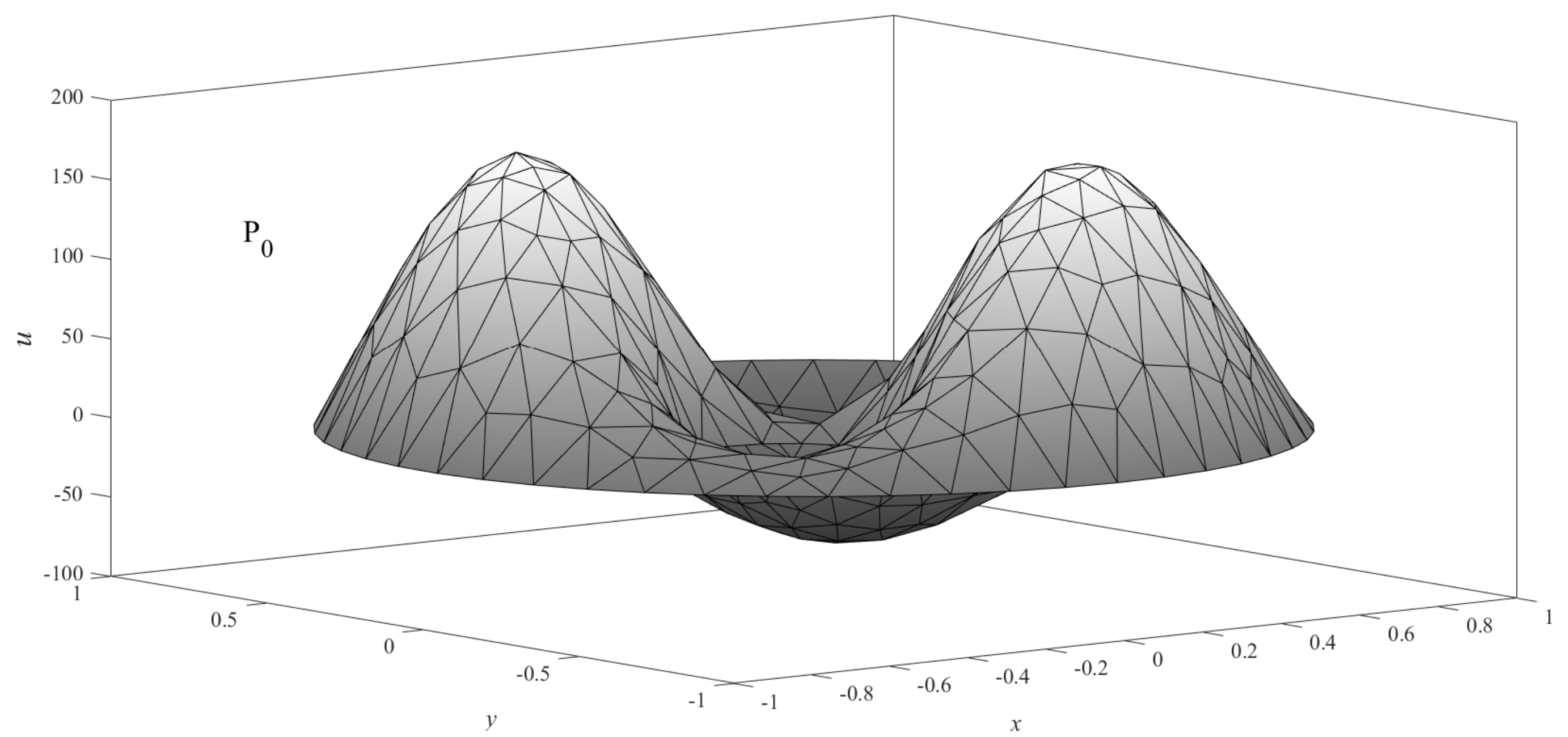} &
	\includegraphics[keepaspectratio,scale=0.13]{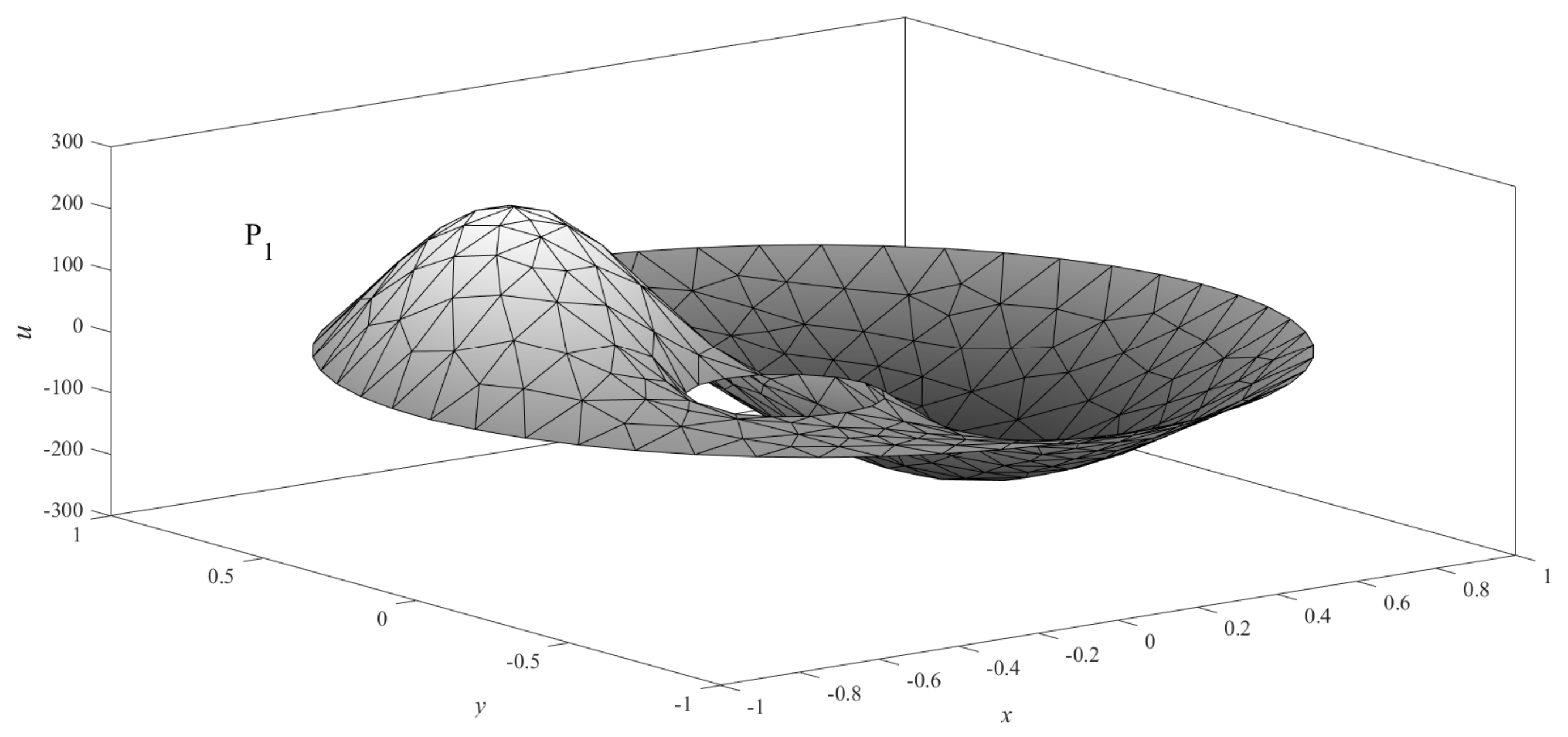} \\
	\includegraphics[keepaspectratio,scale=0.13]{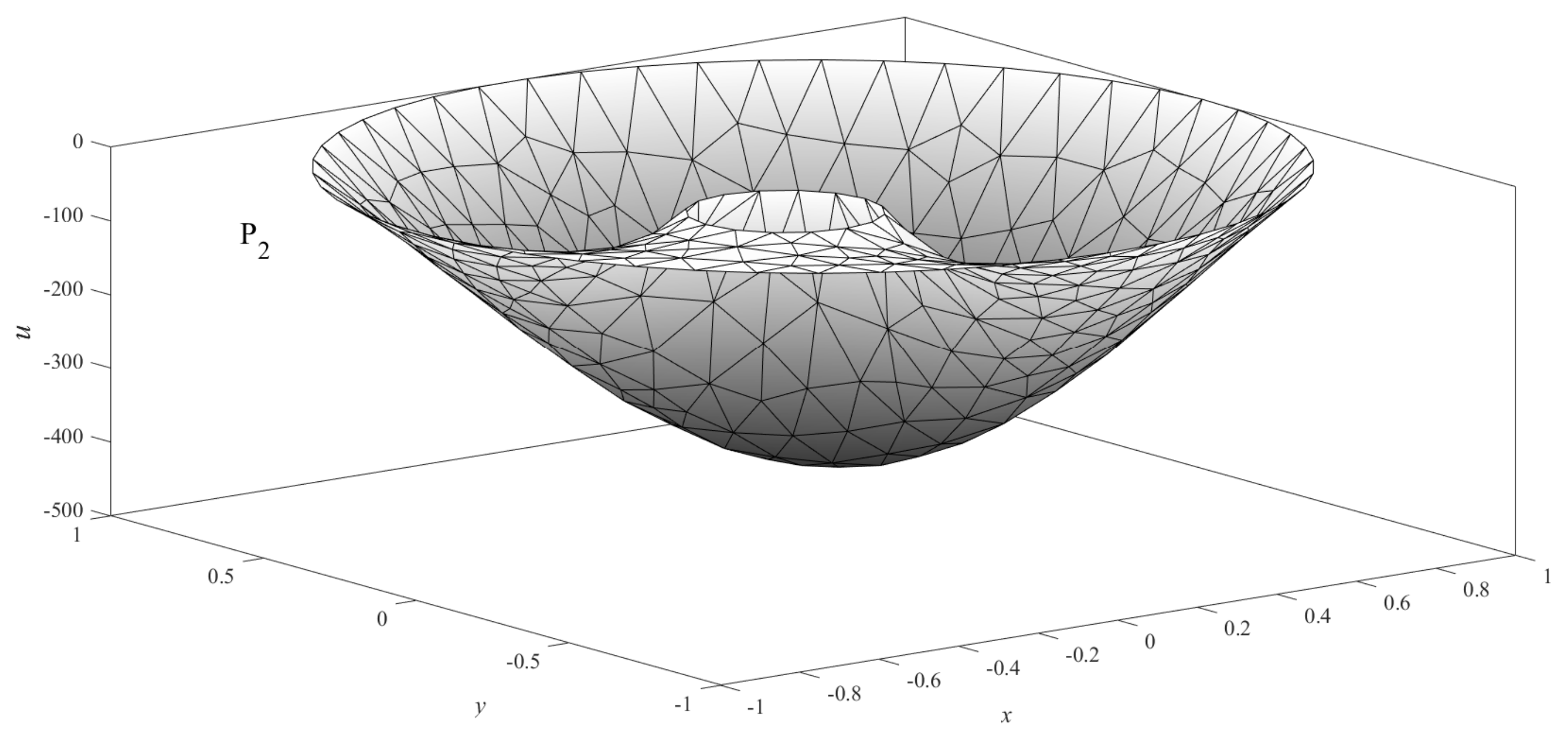} &
	\includegraphics[keepaspectratio,scale=0.13]{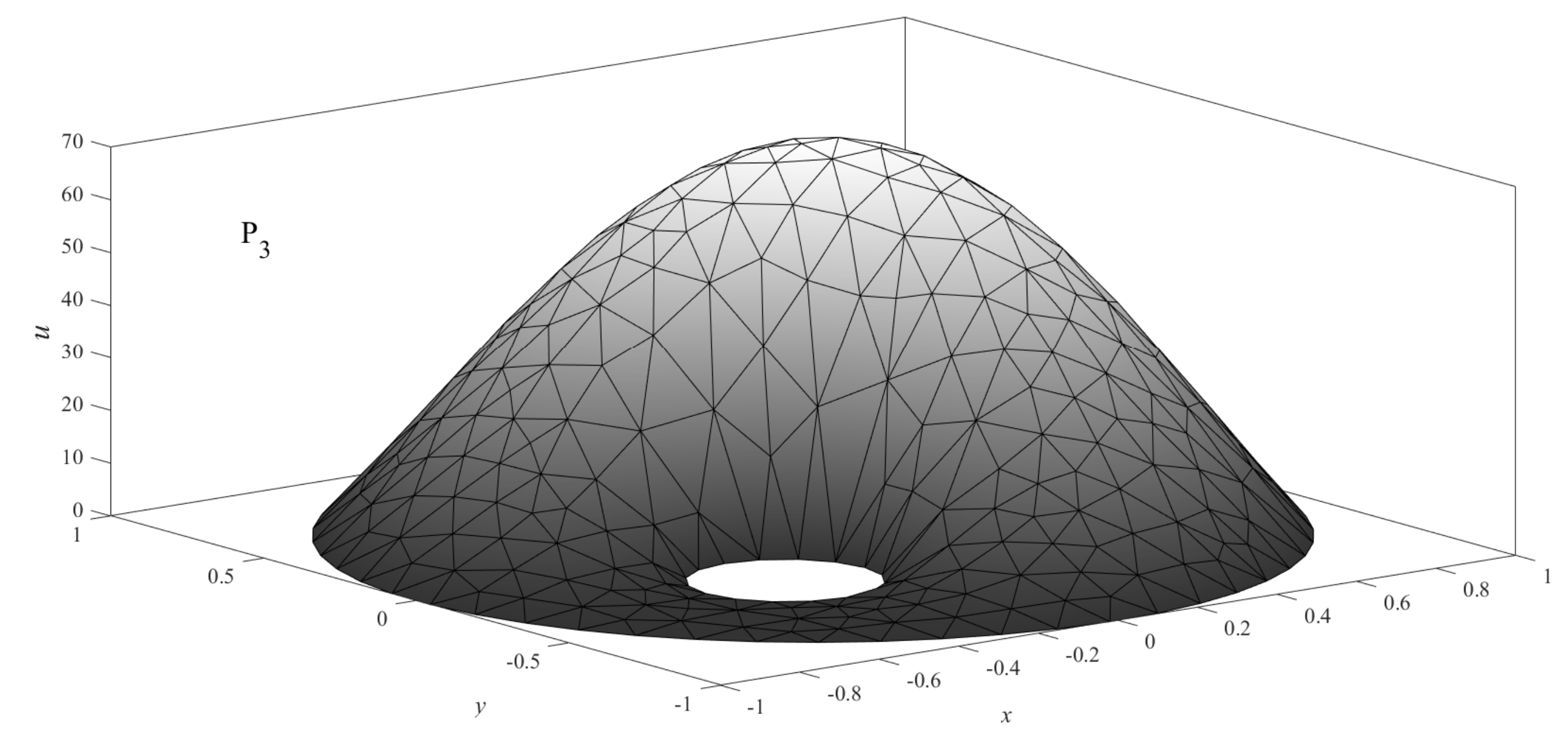} \\
	\includegraphics[keepaspectratio,scale=0.13]{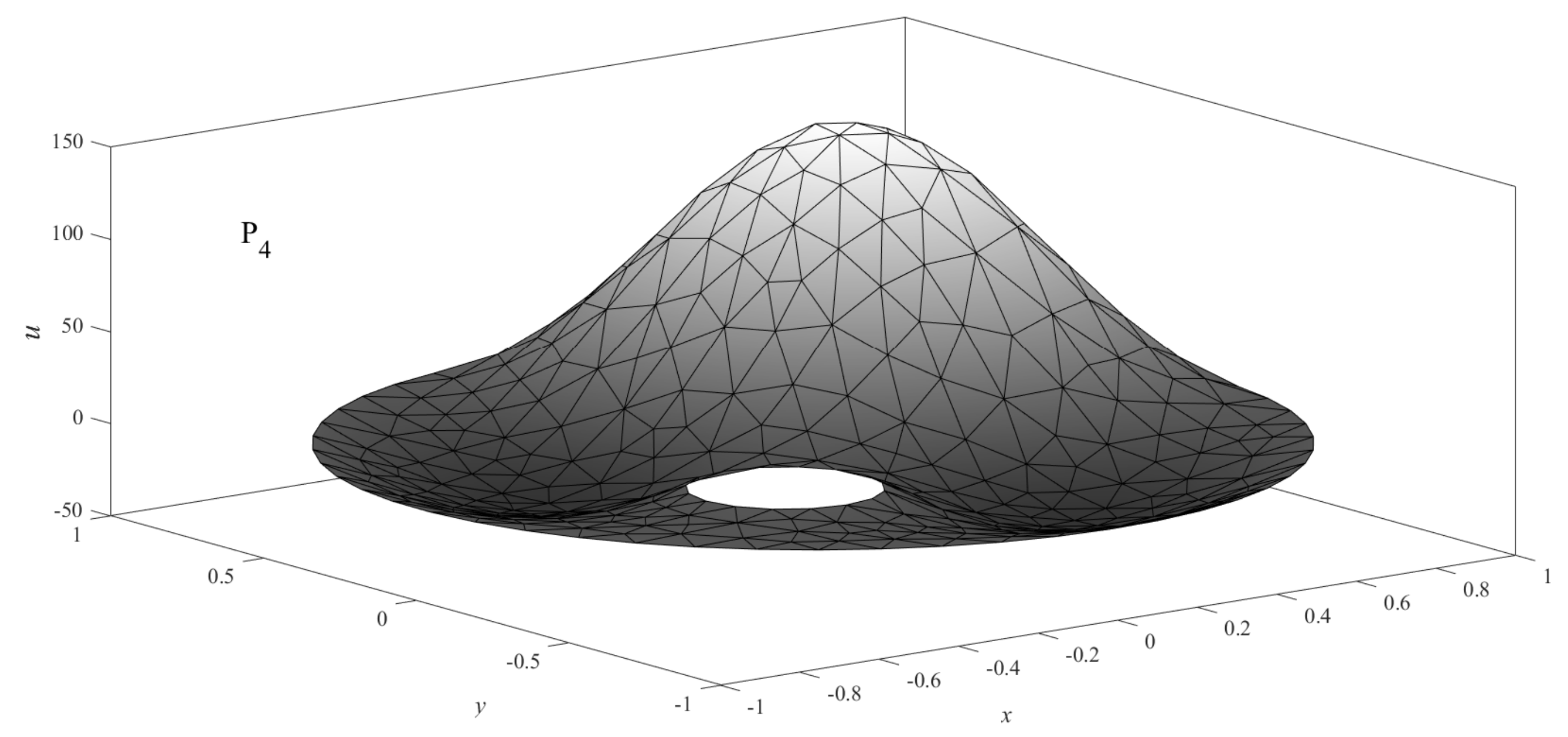} &
	\includegraphics[keepaspectratio,scale=0.13]{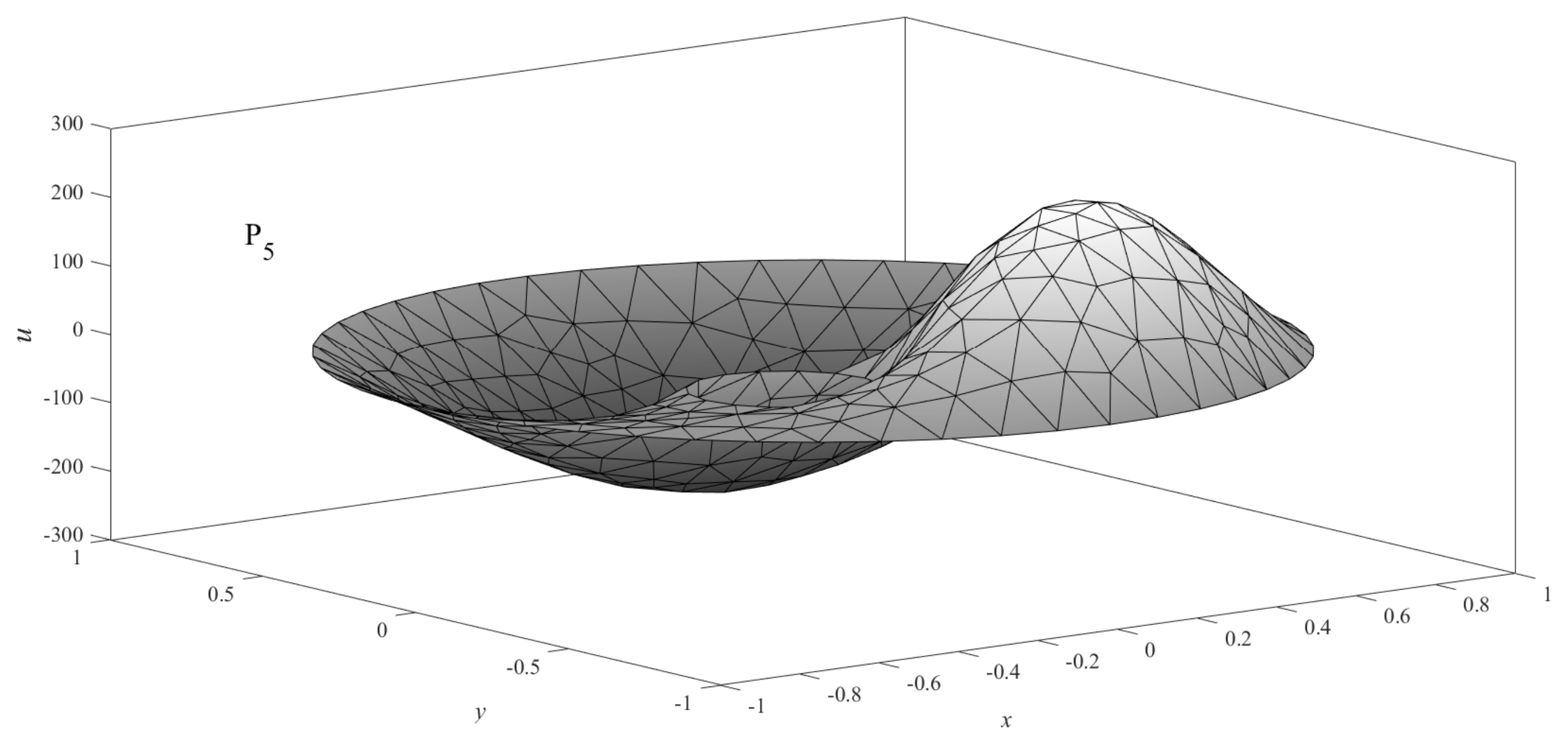}
\end{array}$$
\caption{The six solutions}
\label{fig:5.5}
\end{figure}

No relative residue
$\epsilon({u^h})=\frac{||F^h(u^h)-g^h||_{Y^h}}{||g^h||_{Y^h}}$ is larger than $10^{-12}$.

%
%
%

%

\bigskip
\noindent{\bf Appendix - continuation at a fold from spectral data} \label{continuationatafold}

\bigskip
Predictor-corrector methods at regular points are well described in the literature (\cite{ALLGOWER,KELLER,KELLEY,RHEINBOLDT}). Here we provide details about the inversion algorithm we employ in the examples in Section  \ref{Solimini} in the neighborhood of a fold.

The algorithm must identify critical points $u$ of $F:X \to Y$. Due to the nature of the examples, this is accomplished by checking if some eigenvalue of the Jacobian $DF(u)$ is zero. We assume that the original problem $F(u) = g$ admits a variational formulation, so that $DF(u)$ is a self-adjoint map, and the task is simpler. The general case may also be handled, but we give no details.

We modify the prediction phase of the usual continuation method and perform correction in a standard fashion.
Following (\cite{UECKER, CALNETO,KAMINSKI}), we use spectral data: by continuity, for $u$ close to a fold $u_c$, the Jacobian $DF(u)$ has an eigenvalue $\lambda$ close to a zero eigenvalue $\lambda_c=0$ of $DF(u_c)$, and a normalized eigenvector $\phi$ close to $\phi_c$, a normalized generator of $\ker DF(u_c)$. The eigenvalue $\lambda$  plays the role of arc length in familiar algorithms.

For a smooth function $F: X \to Y$ between real Banach spaces, we search for the preimage $u(t)$ of a smooth curve $ \gamma(t) \subset Y$  such that, at $t = t_c$, $\gamma(t_c) = F(u_c)$ is the image of a fold $u_c$. As usual, we consider the homotopy
$$
\begin{array}{lrll}
	H:&X \times \mathbb{R} &\longrightarrow Y, \ \ (u,t)&\longmapsto F(u)- \gamma(t) \\
\end{array}
$$
and assume the hypothesis of the implicit function theorem:
$$
\begin{array}{lrll}
	DH(u,t): X \times  \mathbb{R} \to  Y \ , \quad (\hat u, \hat t) \ \mapsto DF(u) \ \hat u - \gamma' (t) \ \hat t
\end{array}
$$
is surjective at $(u_c, t_c)$. Clearly, $\gamma'(t) \in Y$.

\medskip
\begin{prop}  $DH(u_c, t_c)$ is  surjective if and only if $\gamma'(t_c) \notin \Ran DF(u_c)$.
\end{prop}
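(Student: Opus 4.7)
The plan is to unpack the image of $DH(u_c,t_c)$ as a sum of two subspaces of $Y$, then use the codimension-one structure of $\Ran DF(u_c)$ guaranteed by the fold hypothesis to reduce the claim to an elementary linear algebra observation.

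\textbf{Step 1: Identify the image.} Directly from the formula $DH(u_c,t_c)(\hat u,\hat t) = DF(u_c)\hat u - \gamma'(t_c)\hat t$, one reads off
\[
\Ran DH(u_c,t_c) \e \Ran DF(u_c) \,+\, \spam\{\gamma'(t_c)\} \,\subset\, Y.
\]
So surjectivity of $DH(u_c,t_c)$ is equivalent to the equality $\Ran DF(u_c) + \spam\{\gamma'(t_c)\} = Y$.

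\textbf{Step 2: Use the fold hypothesis.} Since $u_c$ is a fold of $F$, Theorem \ref{trespassing} applies: $DF(u_c)$ is Fredholm of index zero with a one-dimensional kernel, spanned by some $k \notin \Ran DF(u_c)$. Fredholm of index zero plus one-dimensional kernel forces $\Ran DF(u_c)$ to be a closed subspace of $Y$ of codimension exactly one, and $k$ furnishes an explicit complement. This is the only place where the fold structure enters; the rest of the argument is purely linear.

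\textbf{Step 3: Conclude.} Adding a single vector $v$ to a codimension-one closed subspace $R \subset Y$ produces all of $Y$ precisely when $v \notin R$: if $v \in R$ then $R + \spam\{v\} = R \neq Y$, while if $v \notin R$ then $R + \spam\{v\}$ strictly contains $R$ and therefore equals $Y$ by codimension one. Applying this with $R = \Ran DF(u_c)$ and $v = \gamma'(t_c)$ gives the stated equivalence.

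There is no real obstacle here: the fold assumption already does all the nontrivial work through Theorem \ref{trespassing}, reducing the proposition to the standard fact that supplementing a codimension-one subspace by a line either does nothing or fills the whole space. I would keep the write-up to the three short steps above.
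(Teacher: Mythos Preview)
Your proof is correct and follows essentially the same line as the paper's: both use that a fold has Jacobian which is Fredholm of index zero with one-dimensional kernel (hence closed range of codimension one), and then observe that $\Ran DH(u_c,t_c) = \Ran DF(u_c) + \spam\{\gamma'(t_c)\}$ fills $Y$ exactly when $\gamma'(t_c)$ lies outside that range. One small remark: your appeal to Theorem~\ref{trespassing} is in the wrong direction --- that theorem gives \emph{sufficient} conditions for a point to be a fold, whereas here you need the (equally true) converse, that the fold normal form $(t,z)\mapsto(t^2,z)$ forces the Jacobian to be Fredholm of index zero with a one-dimensional kernel not contained in its range; the paper simply asserts this directly from the definition of fold.
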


\medskip
Geometrically, the curve $\gamma(t) \in Y$ crosses the image of the critical set $F(\mathcal{C})$ transversally at the point $\gamma(t_c) = F(u_c)$. Since  $\gamma$ is chosen by the programmer, this is no real restriction.

\begin{proof}
	As $u_c$ is a fold, $DF(u_c)$ is  a Fredholm operator of index 0 with one dimensional kernel, and  image given by a closed subspace of codimension one. Surjectivity of $DH(u_c, t_c)(\hat u , \hat t) = DF(u_c) \hat u - \gamma'(t_c) \hat t$ holds exactly if $\gamma'(t)$ generates a complementary subspace to $\Ran DF(u_c)$.
\end{proof}

The next proposition ensures that the inversion of appropriate operators may be performed as in the finite dimensional case.  If $ X = Y = \RR^n$, $DH(z)$ is an $n \times (n+1)$ matrix of rank $n$.

\medskip
\begin{prop}
	\label{prop:JacobianaFredholm} For $(u,t)$ close to $(u_c, t_c)$, the Jacobian
	$DH(u,t): X \times \RR \to Y$ is a Fredholm operator of index 1. If $\gamma'(t_c) \ne 0$, $\dim \ker DH(u,t)= 1$ if and only if $u = u_c$, otherwise it is zero.
\end{prop}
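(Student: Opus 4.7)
My approach is in two stages: a perturbation argument for the Fredholm index, then a case split on the kernel at $u=u_c$ versus $u\ne u_c$ using the spectral picture of Theorem \ref{trespassing}.

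For the Fredholm index, I would write $DH(u,t)(\hat u,\hat t) = DF(u)\hat u - \hat t\,\gamma'(t)$, realising $DH(u,t)$ as the domain-extension of $DF(u)$ by the one-dimensional factor $\RR\,\hat t$ with image in the rank-at-most-one subspace $\spam\{\gamma'(t)\}\subset Y$. Since $u_c$ is a fold, $DF(u_c)$ is Fredholm of index $0$, and by Theorem \ref{trespassing}(1) together with continuity of $DF:U\to\cB(X,Y)$, the same holds for $DF(u)$ on a ball about $u_c$. Adjoining a one-dimensional summand to the domain raises the Fredholm index by one, so $\ind DH(u,t) = 1$ throughout a neighbourhood of $(u_c,t_c)$.

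For the kernel I would exploit the equation $DF(u)\hat u = \hat t\,\gamma'(t)$. At $(u_c,t_c)$: because $\gamma'(t_c)\ne 0$, the preceding proposition gives $\gamma'(t_c)\notin\Ran DF(u_c)$, forcing $\hat t = 0$ and then $\hat u\in\ker DF(u_c)=\spam\{\phi_c\}$. This exhibits $\ker DH(u_c,t_c) = \spam\{(\phi_c,0)\}$, a ``vertical'' line in $X\times\RR$ of dimension exactly one. For $(u,t)$ close to $(u_c,t_c)$ with $u\ne u_c$, Theorem \ref{trespassing}(4) presents $\cC$ locally as the smooth zero set of the analytic eigenvalue $\lambda_s$, a codimension-one submanifold through $u_c$; generically, $DF(u)$ is invertible for such $u$, so $\hat u = \hat t\,DF(u)^{-1}\gamma'(t)$ is uniquely determined by $\hat t$, and the solutions of $DH(u,t)(\hat u,\hat t)=0$ form the transverse line $\spam\{(DF(u)^{-1}\gamma'(t),1)\}$. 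Crucially, its projection onto the $X$-summand is \emph{zero} modulo the tangent $(u'(s),1)$ of the smooth continuation branch through $(u,t)$: differentiating $F(u(s))=\gamma(s)$ yields $u'(s)=DF(u(s))^{-1}\gamma'(s)$, which is precisely the generator above. Discounting this trivial tangent direction, the effective kernel at $u\ne u_c$ has dimension zero, while at $u=u_c$ the surviving vertical generator $(\phi_c,0)$ keeps it one-dimensional.

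The main obstacle is exactly this last reconciliation: index-$1$ Fredholmness forces $\dim\ker DH(u,t)\ge 1$ at every point of the neighbourhood, so the stated dichotomy ``$1$ at $u_c$, $0$ otherwise'' cannot be a naive vector-space dimension count. The substance of the claim is that the kernel at $u_c$ is generated by a vector with vanishing $\RR$-component (thus \emph{genuinely} extra, relative to the continuation branch), while at $u\ne u_c$ the one-dimensional kernel is entirely absorbed by the tangent of the smooth solution arc $s\mapsto(u(s),s)$. I would make this quantitative in the spectral coordinates $(\lambda_s(u),\phi_s(u))$ of Theorem \ref{trespassing}(3): the component of the kernel generator along $(\phi_s(u),0)$ scales like $\lambda_s(u)^{-1}$ times a constant vanishing at $u_c$, so the ``vertical'' part of $\ker DH(u,t)$ modulo the branch tangent equals one exactly when $\lambda_s(u)=0$, i.e.\ at $u=u_c$, and zero elsewhere.
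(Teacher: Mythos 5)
Your index argument is fine and is essentially the paper's: the paper factors $DH(u_c,t_c)$ through $(\hat u,\hat t)\mapsto (DF(u_c)\hat u,\gamma'(t_c)\hat t)$ followed by $(y,s)\mapsto y-s$, adds the indices $0$ and $1$, and then invokes stability of the Fredholm property and of the index under small perturbations; your ``adjoining a one-dimensional domain summand raises the index by one,'' applied on a neighbourhood of $(u_c,t_c)$ via Theorem \ref{trespassing}(1) and continuity of $DF$, is the same content. The kernel half is where your write-up has concrete problems. First, $\gamma'(t_c)\notin\Ran DF(u_c)$ does \emph{not} follow from $\gamma'(t_c)\ne 0$ plus the preceding proposition (which is only an equivalence with surjectivity of $DH(u_c,t_c)$); it is the standing hypothesis of the implicit function theorem, assumed just before that proposition, that supplies it. Second, for arbitrary $(u,t)$ near $(u_c,t_c)$ with $u\ne u_c$ you cannot dismiss non-invertibility of $DF(u)$ as non-generic: near a fold the critical set is a codimension-one submanifold, so invertibility holds only off $\cC$, e.g.\ along the solution branch, where the fold normal form guarantees $u_c$ is the only critical point. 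Third, your quantitative repair is backwards: writing $\gamma'(t)=a(u)\,\phi_s(u)+w$ with $w\perp\phi_s(u)$, the kernel generator at a regular point is $\bigl(DF(u)^{-1}\gamma'(t),1\bigr)$, whose $\phi_s$-component is $a(u)/\lambda_s(u)$, and transversality says precisely that $a(u)\to\langle\phi_c,\gamma'(t_c)\rangle\ne 0$; the coefficient does \emph{not} vanish at $u_c$. It is this non-vanishing, together with $\lambda_s(u)\to 0$, that makes the normalized kernel direction rotate to the vertical vector $(\phi_c,0)$ at the fold; after normalization it is the $\RR$-component, not the $X$-component, that vanishes at $u_c$.

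On the other hand, you spotted something real: since $DH(u,t)$ is Fredholm of index $1$ and, under the standing transversality assumption, surjective for all $(u,t)$ near $(u_c,t_c)$, one has $\dim\ker DH(u,t)=1$ throughout the neighbourhood, so the dichotomy ``one at $u_c$, zero otherwise'' cannot be the plain kernel dimension of $DH$. The paper's own proof resolves this only implicitly, by tying the claim to $DF$: under transversality $\ker DH(u_c,t_c)=\ker DF(u_c)\times\{0\}$, so the meaningful statement is that $\dim\ker DF(u)=1$ exactly at $u=u_c$ (equivalently, $\ker DH(u,t)$ meets $X\times\{0\}$ exactly there), and it then appeals to smoothness of the small eigenvalue--eigenvector pair $(\lambda_s,\phi_s)$. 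Your ``effective kernel modulo the branch tangent'' is a reasonable gloss of the same picture, but the mechanism you propose to quantify it is incorrect as stated and should simply be replaced by the eigenvalue criterion $\lambda_s(u)=0$, i.e.\ by tracking $\ker DF(u)$ rather than a rescaled component of the $DH$-kernel.
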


\begin{proof} To show that $DH(z_c)$ is a Fredholm operator of index 1 with one dimensional kernel,
	set $\RR \sim \{  \gamma'(t_c) \hat t , \hat t \in \RR\}$ and write $DH(z_c)$ as the composition
	\[( \hat u , \hat t) \in X \times \RR \mapsto (DF(u_c) \hat u , \gamma'(t_c) \hat t) \in Y \times \RR \ni  (y, s)   \mapsto y - s \in Y \ , \]
	easily seen to consist of Fredholm operators of indices $0$ and $1$ respectively. Recall that the composition of Fredholm operators yields another Fredholm operator and indices add. We then have that $DH(u_c, t_c)$ is a Fredholm operator of index 1. If $\gamma'(t_c) \ne 0$, then $\dim \ker DH(u_c, t_c) \le 1$ and $\dim \ker DF(u_c) = 1$ if and only if $\dim \ker DH(u_c, t_c)=1$. Let $L$ be Fredholm. By standard perturbation properties of Fredholm operators,  $L+P$ is also Fredholm, $\ind (L+P) = \ind L$ and $DH(u,t)$ is Fredholm of index 1, for $(u,t)$ near $(u_c, t_c)$, as $H$ is of class $C^1$. Smoothness of  eigenvalues and eigenvectors proves the claims for $\ker DH(u,t)$.
\end{proof}

To obtain a prediction from point $(u,t) = (u(t),t) \in H^{-1}(0)$, we must find a nonzero tangent vector $(\hat u, \hat t) \in T_{(u,t)} H^{-1}(0)$, so that
$$
\begin{array}{lrll}
	DH(u,t)(\hat u, \hat t)=DF(u) \hat u - \gamma'(t) \hat t =0 \ , \quad (\hat u, \hat t) \ne 0 .
\end{array}
$$

At points $u= u(t)$ for which $DF(u)$ is invertible, this is easy: set $\hat t = 1$ and get $\hat u $ by solving a linear system. Instead, we assume $u$ close to a fold $u_c \in \mathcal{C}$. By the smoothness of simple eigenvalues and associated (normalized) eigenvectors,  $DF(u)$ has an eigenvalue $\lambda$ and associated eigenvector $\phi$  near an eigenvalue  $\lambda_c = 0$ and eigenvector  $\phi_c$ of $DF(u_c)$. We must compute a nonzero solution $(\hat u ,\hat t)$ of $DH(u,t)(\hat u ,\hat t) = 0$, or equivalently $DF(u)\hat u = - \gamma'(t) \hat t$,  with a procedure which is continuous in $t \sim t_c$.

For $\hat u \in X$, split $ \hat u= \hat v + \hat r \phi$ for $\langle \hat v,  \phi \rangle =0$ and $\langle \phi, \phi \rangle =1$. Clearly, $\hat v$ and $\hat r$ are continuous in $u$, since $\phi= \phi(u)$ is. The tensor product $\phi \otimes \phi$ denotes the rank one linear map $(\phi \otimes \phi) v = \langle \phi, v \rangle \phi$. In particular, as $\| \phi \|=1$, $(\phi \otimes \phi) \phi = \phi$.

\medskip
\begin{prop}
	Let $u$ be sufficiently close to a fold $u_c \in X$  of $F$, with the eigenvalue $\lambda \sim 0$ such that $| \lambda|< 1$ and associated normalized eigenvector $\phi$. For $\alpha \ge 1$ the operator $S(u)=DF(u)+\alpha \ \phi \otimes \phi  : X \to Y$ is invertible.	
\end{prop}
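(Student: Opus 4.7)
The plan is to use self-adjointness of $DF(u)$ (granted here by the variational hypothesis) to block-diagonalise $S(u)$ with respect to the orthogonal splitting $X = \langle \phi \rangle \oplus \phi^{\perp}$. Since $\phi$ is a normalized eigenvector of the self-adjoint operator $DF(u)$ with eigenvalue $\lambda$, the complement $\phi^{\perp}$ is $DF(u)$-invariant: for $v \perp \phi$, $\langle DF(u)v,\phi\rangle = \langle v, DF(u)\phi \rangle = \lambda \langle v, \phi \rangle = 0$. The rank-one term $\phi \otimes \phi$ fixes $\phi$ and annihilates $\phi^{\perp}$, so $S(u)$ acts as multiplication by $\lambda + \alpha$ on $\langle \phi \rangle$ and as $DF(u)|_{\phi^{\perp}}$ on $\phi^{\perp}$.

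To establish injectivity, I would take $S(u) w = 0$, write $w = r\phi + v$ with $v \in \phi^{\perp}$, and pair the equation with $\phi$. Self-adjointness and $(\phi\otimes\phi)w = r\phi$ yield $(\lambda + \alpha) r = 0$, and since $|\lambda| < 1 \leq \alpha$ we have $\lambda + \alpha \neq 0$, forcing $r = 0$. The equation then collapses to $DF(u)v = 0$, so $v \in \ker DF(u)$. If $u \notin \mathcal{C}$, $DF(u)$ is injective and $v = 0$; if $u \in \mathcal{C}$, then $\ker DF(u) = \langle \phi \rangle$ by the analytic eigenvector selection of Theorem~\ref{trespassing}, and $v \perp \phi$ again forces $v = 0$.

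To upgrade injectivity to invertibility I would invoke the Fredholm framework: $\alpha\phi \otimes \phi$ has rank one, hence is compact, while $DF(u)$ is Fredholm of index zero by the fold hypothesis and item (1) of Theorem~\ref{trespassing}. Compact perturbations preserve the Fredholm property and the index, so $S(u)$ is Fredholm of index zero, and an injective Fredholm operator of index zero is an isomorphism.

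The main delicacy is verifying that the normalized eigenvector $\phi = \phi(u)$ appearing in the rank-one correction is exactly the continuously (indeed analytically) varying eigenpair supplied by items (2)--(3) of Theorem~\ref{trespassing}, so that it both spans $\ker DF(u)$ whenever that kernel is nontrivial and gives a splitting $X = \langle \phi(u)\rangle \oplus \phi(u)^{\perp}$ that varies smoothly with $u$. Once this matching is in place, the argument treats regular and critical $u$ uniformly and the invertibility of $S(u)$ is independent of which side of $\mathcal{C}$ the point $u$ lies on.
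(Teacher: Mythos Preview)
Your proposal is correct and follows essentially the same route as the paper: both arguments reduce invertibility to injectivity via the Fredholm index-zero property of a rank-one perturbation, decompose a putative kernel vector along $\langle\phi\rangle\oplus\phi^{\perp}$, use self-adjointness to make $\phi^{\perp}$ invariant, kill the $\phi$-component via $\lambda+\alpha\neq 0$, and kill the $\phi^{\perp}$-component via injectivity of $DF(u)$ restricted there. Your case split on whether $u\in\mathcal{C}$ is a minor presentational variant of the paper's direct assertion that $DF(u)|_{\phi^{\perp}}$ is an isomorphism, and your closing remark on the smooth eigenvector selection is a welcome clarification the paper leaves implicit.
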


Notice that $S = DF(u)$ when restricted to $\phi^\perp$.
\medskip

\begin{proof} The operator $S(u)$ is a rank one perturbation of $DF(u)$, and thus it is also a Fredholm operator of index zero: invertibility is equivalent to injectivity. For $\hat u= \hat v + \hat r \phi$, with $\hat v \in \phi^\perp$ and $\langle \phi, \phi \rangle =1 $,
	\[ S(u) \hat u= DF(u) (\hat v + \hat r \phi) + \alpha(\phi \otimes \phi) (\hat v + \hat r \phi) = 0 \ , \]
	implies
	\[ S(u) \hat u= DF(u) \ \hat v  +  \hat r \lambda \phi + \alpha \ \hat r \ \phi  = 0 \ . \]
	Since $DF(u)$ is self-adjoint, $\phi^\perp$ is an invariant subspace and both terms are zero, 
	\[ DF(u) \hat v =0 ,\  (\alpha + \lambda) \hat r \ \phi  = 0 \ . \]
	As the restriction $DF(u)$ to $\ker DF(u)^\perp$ is an isomorphism,  $\hat v = 0$, $\hat r = 0 $.
\end{proof}

\begin{prop} Under the hypotheses of the proposition above, the solution of
	\[ S(u) \hat u = - \lambda \gamma'(t)  - \alpha \langle \phi, \gamma'(t) \rangle \phi \]
	is of the form $\hat u = \hat v - \langle \phi, \gamma'(t) \rangle \phi$ for some $\hat v \in \phi^\perp$, $\| \phi \| = 1$. Moreover,
	\[ DF(u) \hat u = - \lambda \gamma'(t) \ \hbox{and} \ (\hat u , \lambda) \in T_{(u,t)} H^{-1}(0) \ . \]
\end{prop}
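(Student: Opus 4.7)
The plan is to exploit the orthogonal decomposition $X = \phi^\perp \oplus \langle\phi\rangle$ induced by the normalized eigenvector $\phi$ of the self-adjoint operator $DF(u)$; because $DF(u)$ is self-adjoint with eigenvalue $\lambda$ on $\phi$, it leaves $\phi^\perp$ invariant. First I would write $\hat u = \hat v + \hat r\,\phi$ with $\hat v \in \phi^\perp$ and decompose the right-hand side as $\gamma'(t) = g_\perp + \langle\phi, \gamma'(t)\rangle\phi$ with $g_\perp \in \phi^\perp$.

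Applying $S(u) = DF(u) + \alpha\,\phi\otimes\phi$ and using $(\phi\otimes\phi)(\hat v + \hat r\phi) = \hat r\,\phi$ (since $\|\phi\|=1$) together with $DF(u)\phi = \lambda\phi$, one obtains
$$S(u)\hat u \;=\; DF(u)\hat v + (\lambda+\alpha)\,\hat r\,\phi,$$
while the right-hand side of the equation to be solved rewrites as
$$-\lambda\gamma'(t) - \alpha\langle\phi,\gamma'(t)\rangle\phi \;=\; -\lambda\,g_\perp - (\lambda+\alpha)\langle\phi,\gamma'(t)\rangle\phi.$$
Matching components in the invariant subspaces $\phi^\perp$ and $\langle\phi\rangle$ gives $DF(u)\hat v = -\lambda\,g_\perp$ and, since $\alpha\ge 1$ and $|\lambda|<1$ force $\lambda+\alpha\ne 0$, $\hat r = -\langle\phi,\gamma'(t)\rangle$. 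This yields the claimed form $\hat u = \hat v - \langle\phi,\gamma'(t)\rangle\phi$.

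For the second identity, a direct computation using again $DF(u)\phi = \lambda\phi$ gives
$$DF(u)\hat u \;=\; DF(u)\hat v - \langle\phi,\gamma'(t)\rangle\,DF(u)\phi \;=\; -\lambda\,g_\perp - \lambda\langle\phi,\gamma'(t)\rangle\phi \;=\; -\lambda\,\gamma'(t).$$
The tangent condition $(\hat u,\lambda) \in T_{(u,t)}H^{-1}(0)$ then reduces, by the definition $DH(u,t)(\hat u,\hat t) = DF(u)\hat u - \gamma'(t)\hat t$, to the vanishing of $DF(u)\hat u - \gamma'(t)\lambda$, which follows from the identity just established (up to the sign convention chosen for the $t$-component of the predictor tangent).

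The only real subtlety I anticipate is justifying that the equation $DF(u)\hat v = -\lambda\,g_\perp$ admits a unique solution $\hat v \in \phi^\perp$. Near a fold, item (1) of Theorem \ref{trespassing} ensures that $\lambda$ is the isolated simple eigenvalue of $DF(u)$ close to zero, so the restriction of $DF(u)$ to $\phi^\perp$ is an isomorphism; existence and uniqueness of $\hat v$ follow at once, and the explicit factor $\lambda$ on the right-hand side guarantees continuity of $\hat u$ across the fold, where $DF(u)$ itself fails to be invertible.
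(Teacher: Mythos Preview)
Your argument is correct and follows essentially the same route as the paper: decompose $\hat u$ along $\phi$ and $\phi^\perp$, use self-adjointness of $DF(u)$, and read off $\hat r = -\langle\phi,\gamma'(t)\rangle$ (the paper does this in one stroke by taking the inner product of the defining equation with $\phi$, then cancels the $\alpha\hat r\phi$ term to get $DF(u)\hat u = -\lambda\gamma'(t)$ directly). Your final paragraph on solvability of $DF(u)\hat v = -\lambda g_\perp$ is unnecessary, since the previous proposition already gives invertibility of $S(u)$ and hence existence and uniqueness of $\hat u$ outright; the component equations are then automatically consistent.
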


In other words, $(\hat u , \lambda)$ is the tangent vector required in the prediction phase.

\begin{proof} As $S(u)$ is invertible, $\hat u$ is well defined for the given right hand side. For $\hat u = \hat v + \hat r \phi$ with $\hat v \in \phi^\perp$ and $\langle \phi, \phi \rangle =1 $,
	take the inner product with $\phi$ of
	\[ S(u) \hat u = DF(u) \hat u + \alpha \hat r \phi = - \lambda \gamma'(t)  - \alpha \langle \phi, \gamma'(t) \rangle \phi \ . \]
	to obtain $\hat r = - \langle \phi, \gamma'(t) \rangle$ and then
	$DF(u) \hat u = - \lambda \gamma'(t)$ follows.
\end{proof}

In the application of Section \ref{Solimini} finite element methods applied to the Laplacian yields the usual sparse matrices. The term $\alpha \phi \otimes \phi$ spoils sparseness, and one has to proceed by inversion through standard techniques associated with rank one perturbations. The numerical inversion worked well with $\alpha=1$.

\end{document}